\newtheorem{definition}{Definition}[section]
\newtheorem{theorem}[definition]{Theorem}
\newtheorem{proposition}[definition]{Proposition}
\newtheorem{corollary}[definition]{Corollary}
\newtheorem{lemma}[definition]{Lemma}
\newtheorem{remark}[definition]{Remark}
\numberwithin{equation}{section}
\begin{document}

\allowdisplaybreaks[4]

\title{Existence of self-similar solutions for the surface \\ diffusion flow with nonlinear boundary conditions \\ in the half space}

\author{Tomoro Asai\footnotemark\,\,\ and Yoshihito Kohsaka\footnotemark}

\footnotetext[1]{Graduate School of Science and Engineering, Saitama University, 255 Shimo-Okubo, Sakura-ku, Saitama City, Saitama 338-8570, Japan; 
e-mail:\,edu54808@mail.saitama-u.ac.jp} 
\footnotetext[2]{Graduate School of Maritime Sciences, Kobe University, 5-1-1 Fukaeminami-machi, Higashinada-ku, Kobe 658-0022, Japan; 
e-mail:\,kohsaka@maritime.kobe-u.ac.jp}

\date{\empty}

\maketitle

\pagestyle{fancy}
\fancyhead{}
\fancyhead[LE]{\thepage}
\fancyhead[CE]{T. Asai and Y. Kohsaka}
\fancyhead[RO]{\thepage}
\fancyhead[CO]{Existence of self-similar solutions for the surface diffusion flow}
\renewcommand{\headrulewidth}{0.0pt}
\fancyfoot{}

\vspace*{-0.5cm}
\footnote[0]{2020 Mathematics Subject Classification: Primary 35C06; Secondary 35G31, 53E40, 74N20.\\
Keywords: Self-similar solution; Surface diffusion flow; Nonlinear boundary condition.
}

\begin{abstract}
We study the Mullins problem that was proposed by Mullins in 1957 and is one of the models of the thermal grooving by surface diffusion. 
Mathematically, this is the problem of evolving curves in the half space that is governed by the surface diffusion flow with the contact angle 
condition and the no-flux condition on the boundary. The no-flux condition is represented as the equation that the first order derivative 
of the curvature with respect to the arc-length parameter is equal to zero, so that it is the nonlinear boundary condition. 
For this original Mullins problem, we show the existence and the uniqueness of the self-similar solution. 
The self-similar solution is obtained as the mild solution under the smallness assumption on the contact angle.  
\end{abstract}

\section{Introduction}\label{sec:essn-Introduction}
\indent 
This paper studies the existence and uniqueness of the self-similar solution to the Mullins problem that is the problem of evolving curves 
governed by  the surface diffusion flow with the contact angle condition and the no-flux condition in the half space. More precisely, 
let $\{\Gamma(t)\}_{t\ge0}$ be a family of planar curves in the half space $\Omega_+:=\{(x,y)\in\mathbb{R}^2\mid x\ge0\}$. Assume that 
one endpoint of $\Gamma(t)$ touches the boundary $\partial\Omega_+=\{(x,y)\in\mathbb{R}^2\mid x=0\}$ and the motion 
of $\Gamma(t)$ with the initial condition $\Gamma(0)=\Gamma_0$ is governed by
\begin{equation}\label{geometric}
\left\{\begin{array}{l}
V=-\kappa _{ss}\,\,\ \text{on}\,\ \Gamma(t), \\[0.05cm]
\sphericalangle(\Gamma(t),\partial\Omega_+)=\dfrac{\pi}2-\beta\,\,\ \text{at}\,\ \Gamma(t)\cap\partial\Omega_+, \\[0.25cm]
\kappa_s=0\,\,\ \text{at}\,\ \Gamma(t)\cap\partial\Omega_+. 
\end{array}\right.
\end{equation}
Here $V$ and $\kappa$ are the normal velocity and the curvature of $\Gamma(t)$, respectively, $s$ denotes the arc-length parameter 
along $\Gamma(t)$, and $\beta$ is a constant in the interval $(0,\pi/2)$. The first boundary condition at $\Gamma(t)\cap\partial\Omega_+$ 
is the contact angle condition and the second one is the no-flux condition. Note that the no-flux condition is the nonlinear boundary condition 
(for example, see \eqref{eq:161031a4} or \eqref{3rd-bc} below). 
A more detailed description of this system can be found in Mullins~\cite{mul;57}, or in Asai-Giga~\cite{asa;gig;14}. 
The Figure~\ref{fig:thermal} indicates our situation. 

\begin{figure}[htbp]
\begin{center}
\scalebox{0.8}{\includegraphics{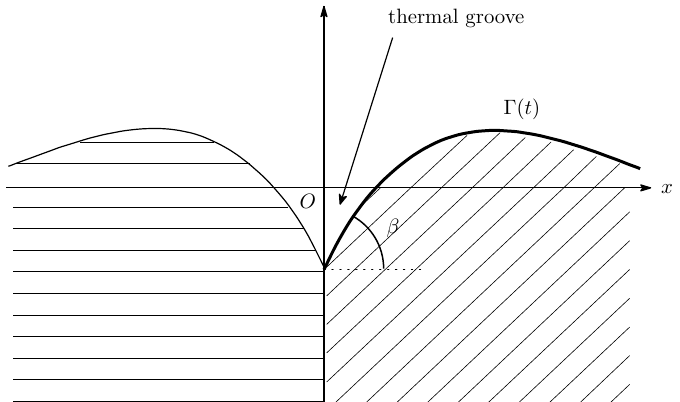}}
\caption{Profile of thermal groove}
\label{fig:thermal}
\end{center}
\end{figure}

The surface diffusion flow was first proposed by Mullins~\cite{mul;57} to model the thermal grooving by surface diffusion. 
Dav\`{\i}--Gurtin~\cite{dav;gur;90} derived the surface diffusion flow from the thermodynamical point of view. 
Taylor--Cahn~\cite{tay;cah;94} characterized the surface diffusion flow as the $H^{-1}$-gradient flow for the area functional. 
This gradient flow structure yields that the evolving closed embedded surface by the surface diffusion flow preserves the enclosed volume 
and decreases the surface area in time. Cahn--Elliott--Novick-Cohen~\cite{cah;ell;nov;96} showed that the interface governed by 
the surface diffusion flow is the singular limit of the zero level set of the solution to the Cahn--Hilliard equation with degenerate mobility. 

The motion of curves by the surface diffusion flow (which is also called the curve diffusion flow in the case of curves) are studied by 
many researchers. With regard to the equilibrium state for the surface diffusion flow and the behavior of a solution around them, for example, 
see Elliott--Garcke~\cite{ell;gar;97} in the case of circles, Miura--Okabe~\cite{miu;oka;21} in the case of multiply covered circles, 
and Garcke--Ito--Kohsaka~\cite{gar;ito;koh;05,gar;ito;koh;08}, Gazwani--McCoy~\cite{gaz;mcc;24}, and Wheeler--Wheeler~\cite{whe;whe;24} 
in the case of a part of straight lines or circles with the right angle condition on the boundary. Concerning the traveling wave, 
we refer to Kagaya--Kohsaka \cite{kag;koh;20} that proved the existence of the non-convex traveling waves, which include sign-changing ones, 
under the setting that the curves with two end points on $\Pi:=\{(x,y)\in\mathbb{R}^2\,|\,x\in\mathbb{R},\,y=0\}$ satisfy the contact angle condition 
and the no-flux condition, and Ogden--Warren~\cite{ogd;war;preprint} that showed the existence of a non-compact, complete translating solution. 
In these papers, it plays a key role to analyze the boundary value problem to the nonlinear third order ordinary differential equation 
for the function associated with the Gauss map. As for the self-similar solution, we introduce Asai--Giga~\cite{asa;gig;14} that 
gives the motivation for this paper.  In \cite{asa;gig;14}, they proved the existence and the stability of the self-similar solution to the problem 
imposing the linearized boundary condition instead of the no-flux condition. For further details, 
see the description below. Recently, Rybka and Wheeler~\cite{ryb;whe;25} classified the solitons, which mean equilibria, self-similar solutions, 
and traveling waves, for the surface diffusion flow of entire graphs over $\mathbb{R}$. In \cite{ryb;whe;25}, they point out that 
all non-trivial solitons to the graphical surface diffusion flow over $\mathbb{R}$ are unbounded. 
Regarding further interesting motion of curves due to the surface diffusion flow, for example, see Giga--Ito~\cite{gig;ito;98,gig;ito;99} 
for pinching or loss of convexity of evolving closed curves, and Elliott--Maier-Paape~\cite{ell;mai;01} for losing a graph. 

Set $\mathbb{R}_+:=[0,\infty)$. Hereafter, we assume that $\Gamma(t)$ is parametrized by the graph of a function $w(x,t)$ for $x\in\mathbb{R}_+$, 
i.e., $\Gamma(t)=\{(x,w(x,t))\,|\,x\in\mathbb{R}_+\}$. 
%
%
Then the geometric problem \eqref{geometric} is represented as 
\begin{align}
&w_t=-\partial_x\biggl\{\dfrac{1}{(1+w_x^2)^{\frac12}}\partial_x\biggl(\dfrac{w_{xx}}{(1+w_x^2)^{\frac32}}\biggr)\biggr\},
\,\,\ x>0,\ t>0, \label{eq:161031a2} \\
&w_x(0,t)=\tan \beta,\,\,\ t>0, \label{eq:161031a3} \\
&\partial_x\biggl(\dfrac{w_{xx}}{(1+w_x^2)^{\frac32}}\biggr)\biggr|_{x=0}=0,\,\,\ t>0. \label{eq:161031a4}
\end{align}
Setting $\Gamma_0:=\{(x,a(x))\,|\,x\in\mathbb{R}_+\}$, which is the parameterization of $\Gamma_0$ 
by the graph of a function $a(x)$ for $x\in\mathbb{R}_+$, 
the initial condition for $w$ is given by 
\begin{equation}\label{eq:161031a5}
w(x,0)=a(x),\,\,\ x\in\mathbb{R}_+.
\end{equation}
The purpose of this paper is to prove the existence and the uniqueness of the self-similar solution to the original Mullins problem 
\eqref{eq:161031a2}--\eqref{eq:161031a5} with $a(x)\equiv0$ under the assumption that the angle 
$\beta>0$ is sufficiently small. 
The definition of the self-similar solution to our problem is as follows. 

\begin{definition}\label{def:self-similar}
For a function $w_{\ast}:\mathbb{R}_+\times(0,\infty)\to\mathbb{R}$, set 
\[
w_{\ast}^{\lambda}(x,t):=\lambda^{-1}w_{\ast}(\lambda x,\lambda^4t)
\]
for $\lambda>0$. We say that $w_{\ast}$ is a self-similar solution to \eqref{eq:161031a2}--\eqref{eq:161031a4} if $w_{\ast}$ satisfies 
\eqref{eq:161031a2}--\eqref{eq:161031a4} and $w_{\ast}^{\lambda }(x,t)=w_{\ast}(x,t)$.
\end{definition}

\noindent
We remark that we find the self-similar solution to the original Mullins problem as the solution of the corresponding 
integral equation (see Section \ref{sec:MullinsProblem} for details). 
In the definition of $w_\ast^\lambda$, let $\lambda=t^{-\frac14}$ and set $W(y):=w_\ast(y,1)$. Then $W$ is called 
the profile function and the self-similar solution $w_{\ast}$ is of the form $w_{\ast}(x,t)=t^{\frac14}W(x/t^{\frac14})$. 
If there exists the upper bound of $|W|$ independent of $x$ and $t$, we say that a self-similar solution is bounded. 
We can consider two approaches to prove the existence of a self-similar solution. One is to solve an ordinary differential equation for 
a profile function. For the surface diffusion flow,  it becomes a highly-nonlinear fourth-order ordinary differential equation, so that 
it seems to be difficult to solve it on $\mathbb{R}_+$. Even if we use a function associated with the Gauss map as in \cite{kag;koh;20}, 
it becomes a nonlinear third-order ordinary differential equation with nonlocal terms although the nonlinearity is weakened, 
and so the difficulty still remains to solve it. Another approach is to solve a partial differential equation imposing a homogeneous initial data. 
This approach is introduced initially by Giga--Miyakawa~\cite{gig;miy;89} and developed by Cazenave--Weissler~\cite{caz;wei;98}. 
In our problem, the initial data is zero (see the end of Section \ref{sec:Preliminaries} for details). 
We apply the method of solving a partial differential equation as in \cite{asa;gig;14}. 

Let us comment on the difference from \cite{asa;gig;14} that studied the existence and the stability of the self-similar solution to the problem in which 
\begin{equation}\label{eq:161031a4_linear} 
w_{xxx}(0,t)=0
\end{equation} 
is imposed instead of the no-flux condition \eqref{eq:161031a4} as the second boundary condition. Note that \eqref{eq:161031a4} is equivalent to 
\begin{equation}\label{3rd-bc}
w_{xxx}(0,t)=\frac{3w_x(0,t)}{1+w_x(0,t)^2}(w_{xx}(0,t))^2. 
\end{equation}
From this, it is observed that \eqref{eq:161031a4_linear} is the linearization of \eqref{eq:161031a4} around $w_x\approx 0$ (small slope approximation). 
Due to the nonlinearity of \eqref{3rd-bc}, we find a solution in the space 
\[
B_{\frac{2+\gamma}4}((0,T];BUC^{3+\gamma}(\mathbb{R}_+))\cap C_{\frac{2+\gamma}4}^{\frac{1+\gamma}4}((0,T];BUC^2(\mathbb{R}_+)),
\]
while it is $BUC_{\frac{2+\gamma}4}((0,T];h^{3+\gamma }(\mathbb{R}_+))$ in \cite{asa;gig;14} (see Section \ref{sec:Preliminaries} for 
the definition of the spaces). Note that 
\[
B_{\frac{2+\gamma}4}((0,T];BUC^{3+\gamma}(\mathbb{R}_+))\not\subset
C_{\frac{2+\gamma}4}^{\frac{1+\gamma}4}((0,T];BUC^2(\mathbb{R}_+))
\]
since $B_{\frac{2+\gamma}4}((0,T];BUC^{3+\gamma}(\mathbb{R}_+))$ does not assume any time-regularity on functions, the inclusion, 
in general, does not hold. 
Furthermore, we derive the required estimates in the proof by using the Green function for the Cauchy problem of 
the fourth-order diffusion equation $w_t=-w_{xxxx}$, although the abstract theory of analytic semigroups is applied in \cite{asa;gig;14}. 
One of the advantages in our method is that the well-definedness of the integral equation under our regularity setting becomes 
clearer than \cite{asa;gig;14}. Also, the dependence on $x$ for $w(x,t)$ is explicit and so it can be proved that there exists a negative 
upper bound of $w(0,t)$ as long as $\beta>0$, which guarantees the existence of non-trivial self-similar solution, 
i.e., $w(x,t)\not\equiv0$ (see Remark \ref{rem:existence-selfsimilar}). 
We remark that Koch--Lamm~\cite{koc;lam;12} study the self-similar solutions of the geometric flow including the surface diffusion flow 
as entire graphs over $\mathbb{R}^n$ and use the Green function of the fourth-order diffusion equation on $\mathbb{R}^n$ in their analysis. 
%
%
The Green function as mentioned above is also used to represent the explicit formula of the solution to the initial boundary value problem 
\begin{equation}\label{LP}
\left\{\begin{array}{l}
U_t=-U_{xxxx},\,\,\ x>0,\ t>0, \\[0.05cm]
U_x(0,t)=\tan \beta ,\,\,\ t>0, \\[0.05cm] 
U_{xxx}(0,t)=c_{\beta}b(t), \,\,\ t>0, \\[0.05cm] 
U(x,0)=0,\,\,\ x\in\mathbb{R}_+,
\end{array}\right.
\end{equation}
where $b(t)$ is a given function and 
\begin{equation}\label{c_beta}
c_\beta:=\frac{3\tan\beta}{1+\tan^2\beta}\,(<3\tan\beta).
\end{equation}
The explicit formula of the solution to the problem in which $U_{xxx}(0,t)=0$ instead of $U_{xxx}(0,t)=c_{\beta}b(t)$ is obtained 
in Martin~\cite{mar;09}. In the case of the solution to the problem \eqref{LP}, the term depending on $b(t)$ is added to the solution 
by Martin. To estimate this additional term in our setting of the space, a function $b$ should lie in 
$C_{\frac{3+\gamma}4}^{\frac{1+\gamma}4}((0,T];\mathbb{R})$ (see Remark \ref{rem:reg_wxx2}). 

This paper is organized as follows. In Section~\ref{sec:Preliminaries}, we give the definition of the function spaces and rewrite the equation 
\eqref{eq:161031a2} as the divergence form. In Section~\ref{sec:LinearProblem} we introduce the explicit formula of the solution to 
the linear problem \eqref{LP} and derive the estimate of it in $B_{\frac{2+\gamma}4}((0,T];BUC^{3+\gamma}(\mathbb{R}_+))\cap 
C_{\frac{2+\gamma}4}^{\frac{1+\gamma}4}((0,T];BUC^2(\mathbb{R}_+))$. Section~\ref{sec:MullinsProblem} is devoted to showing the existence 
of the self-similar solution to the original Mullins problem \eqref{eq:161031a2}--\eqref{eq:161031a5} with $a(x)\equiv0$. 
In Subsection \ref{subsec:Grenn-func} the required estimates for the Green function are derived. In Subsection \ref{subsec:mild-sol}, 
by subtracting the solution $U$ to \eqref{LP} from $w$ to the nonlinear problem, we reduce the problem with inhomogeneous boundary conditions 
to one with the homogeneous condtions. Based on this homogeneous problem, the integral equation corresponding to 
the problem \eqref{eq:161031a2}--\eqref{eq:161031a5} with $a(x)\equiv0$ is obtained. We state the main theorem on the existence 
of a self-similar solution and prove it by constructing the solution to the integral equation (i.e., the mild solution) via a fixed point argument. 
It is also shown that the mild solution is a weak solution. The detailed estimates on the inhomogeneous part of the equation are given 
in the appendix. 

\section{Preliminaries}\label{sec:Preliminaries}
First of all, we introduce some function spaces. 
Let $\Omega$ be a (possibly unbounded) interval in $\mathbb{R}$ and let $BUC^\ell(\Omega)\,(\ell\in \mathbb{N}\cup\{0\})$ be the space 
consisting of $\ell$-times differentiable functions on $\Omega$ whose derivatives up to the order $\ell$ are bounded and uniformly continuous. 
%
%
The norm of $BUC^\ell(\Omega)$ is given by
\begin{align*}
&\|w\|_{B(\Omega)}:=\sup_{x\in\Omega}|w(x)|, \quad 
\|w\|_{BUC^\ell(\Omega)}:=\sum _{k=0}^{\ell}\|\partial_x^kw\|_{B(\Omega)}.
\end{align*}
For $\ell\in \mathbb{N}\cup\{0\}$ and $\gamma\in(0,1)$ we define $BUC^{\ell+\gamma}(\Omega)$ and its norm as 
\begin{align*}
&BUC^{\gamma}(\Omega):=\biggl\{w\in BUC(\Omega)\,\bigg|\,
[w]_{C^\gamma(\Omega)}:=\sup _{\begin{subarray}{c}x,y\in\Omega \\ x\ne y\end{subarray}}\frac{|w(x)-w(y)|}{|x-y|^{\gamma}}<\infty\biggr\}, \\
&BUC^{\ell+\gamma}(\Omega):=\{w\in BUC^\ell(\Omega)\,|\,\partial_x^\ell w\in BUC^{\gamma}(\Omega)\}, \\[0.05cm]
&\|w\|_{BUC^{\ell+\gamma}(\Omega)}:=\|w\|_{BUC^\ell(\Omega)}+[\partial_x^\ell w]_{C^\gamma(\Omega)}.
\end{align*}
Hereafter, we write $\|\,\cdot\,\|_\infty$ and $[\,\cdot\,]_\gamma$ instead of $\|\,\cdot\,\|_{B(\Omega)}$ and $[\,\cdot\,]_{C^\gamma(\Omega)}$, respectively.  
%
%
Let $E$ be a Banach space with the norm $\|\,\cdot\,\|_E$. Then, for $\alpha>0$, $\gamma\in(0,1)$ and $T>0$ we define the spaces 
$B_\alpha((0,T];E)$, $C^\gamma((0,T];E)$, and $C_\alpha^\gamma((0,T];E)$ as 
\begin{align*}
&B_\alpha((0,T];E):=\{w:(0,T]\to E\,|\sup_{t\in(0,T]}t^\alpha\|w(t)\|_E<\infty\}, \\
&C^\gamma((0,T];E):=\biggl\{w\in BC((0,T];E)\,\bigg|
\sup _{\begin{subarray}{c}s,t\in(0,T] \\ s\ne t\end{subarray}}\frac{\|w(t)-w(s)\|_E}{|t-s|^{\gamma}}<\infty\biggr\}, \\
&C_\alpha^\gamma((0,T];E):=\biggl\{w\in B_{\alpha-\gamma}((0,T];E)\cap C^\gamma((0,T];E)\,\bigg| \\
&\hspace*{92pt} 
[w]_{C_\alpha^\gamma((0,T];E)}:=\sup _{\begin{subarray}{c}s,t\in(0,T] \\ s<t\end{subarray}}s^\alpha\,\frac{\|w(t)-w(s)\|_E}{(t-s)^{\gamma}}<\infty\biggr\}
\end{align*}
with the norms 
\begin{align*}
&\|w\|_{B_\alpha((0,T];E)}:=\sup_{t\in(0,T]}t^\alpha\|w(t)\|_E, \\
&\|w\|_{C_\alpha^\gamma((0,T];E)}:=\|w\|_{B_{\alpha-\gamma}((0,T];E)}+[w]_{C_\alpha^\gamma((0,T];E)},
\end{align*}
where $BC((0,T];E)$ is the space consisting of bounded and continuous functions $w:(0,T]\to E$. 
For more details of the spaces above, see \cite[Chapter 0 and Chapter 4]{lun;95;book}. 

Let $\gamma\in(0,1)$. For a fixed $T>0$ and $\ell\in\{0,1,2,3\}$, we introduce the equivalent norms of $BUC^\ell(\mathbb{R}_+)$ and 
$BUC^{\ell+\gamma}(\mathbb{R}_+)$ by
\begin{align*}
&\|w\|_{BUC^\ell(\mathbb{R}_+)}=\sum_{k=0}^\ell\frac1{T^{\frac{\ell-k}4}}\|\partial_x^kw\|_{\infty}, \\
&\|w\|_{BUC^{\ell+\gamma}(\mathbb{R}_+)}=\sum_{k=0}^\ell\frac{1}{T^{\frac{\ell-k+\gamma}4}}\|\partial_x^kw\|_{\infty}+[\partial_x^\ell w]_\gamma.
\end{align*}
Then the norms of $B_{\frac{2+\gamma}4}((0,T];BUC^{\ell+\gamma}(\mathbb{R}_+))$ and 
$C_{\frac{2+\gamma}4}^{\frac{1+\gamma}4}((0,T];BUC^2(\mathbb{R}_+))$ are given by 
\begin{align*}
\|w\|_{B_{\frac{2+\gamma}4}((0,T];BUC^{\ell+\gamma}(\mathbb{R}_+))} 
=&\,\sup_{t\in(0,T]}t^{\frac{2+\gamma}4}\|w(t)\|_{BUC^{\ell+\gamma}(\mathbb{R}_+)} \\
=&\,\sum_{k=0}^\ell\frac{1}{T^{\frac{\ell-k+\gamma}4}}\sup _{t\in(0,T]}t^{\frac{2+\gamma}4}\|\partial_x^kw(t)\|_{\infty}
+\sup _{t\in(0,T]}t^{\frac{2+\gamma}4}[\partial_x^\ell w(t)]_{\gamma}, \\
\|w\|_{C_{\frac{2+\gamma}4}^{\frac{1+\gamma}4}((0,T];BUC^2(\mathbb{R}_+))} 
=&\,\|w\|_{B_{\frac{2+\gamma}4-\frac{1+\gamma}4}((0,T];BUC^2(\mathbb{R}_+))}
+[w]_{C_{\frac{2+\gamma}4}^{\frac{1+\gamma}4}((0,T];BUC^2(\mathbb{R}_+))} \\
=&\,\sum_{k=0}^2\frac1{T^{\frac{2-k}4}}\sup_{t\in(0,T]}t^{\frac14}\|\partial_x^kw(t)\|_{\infty} \\
&\,+\sum_{k=0}^2\frac1{T^{\frac{2-k}4}}\sup _{\begin{subarray}{c}s,t\in(0,T] \\ t>s\end{subarray}}s^{\frac{2+\gamma}4}\,
\frac{\|\partial_x^kw(t)-\partial_x^kw(s)\|_\infty}{(t-s)^{\frac{1+\gamma}4}}.
\end{align*}
Our motivation is to construct a global-in-time solution. To this end, we seek estimates with constants independent of $T$. 
By choosing these norms, we can cancel out the powers of $t$ that arise in our estimates. Thus these definitions are well-suited for our purpose. 

We consider the problem \eqref{eq:161031a2}--\eqref{eq:161031a5} with $a(x)\equiv0$, which corresponds to the initial boundary value problem 
for a bounded self-similar solution. To analyze this, let us transform \eqref{eq:161031a2} and \eqref{eq:161031a4}. 
Taking account of 
\begin{equation}\label{formula_1}
\partial_x\biggl(\dfrac{w_{xx}}{(1+w_x^2)^{\frac32}}\biggr)
=\frac{w_{xxx}}{(1+w_x^2)^{\frac32}}-\frac{3w_x(w_{xx})^2}{(1+w_x^2)^{\frac52}},
\end{equation}
we have 
\[
\partial_x\biggl\{\dfrac1{(1+w_x^2)^{\frac12}}\partial_x\biggl(\dfrac{w_{xx}}{(1+w_x^2)^{\frac32}}\biggr)\biggr\}
=\partial_x\biggl(\frac{w_{xxx}}{(1+w_x^2)^2}-\frac{3w_x(w_{xx})^2}{(1+w_x^2)^3}\biggr).
\]
This implies that 
\begin{align}
&\text{(R.H.S. of \eqref{eq:161031a2})} \notag \\
&=-w_{xxxx}-\partial_x\biggl\{\biggl(\frac1{(1+w_x^2)^2}-1\biggr)w_{xxx}-\frac{3w_x(w_{xx})^2}{(1+w_x^2)^3}\biggr\}, \label{transformed_RHS}
\end{align}
so that \eqref{eq:161031a2} is represented as 
\begin{equation}\label{transformed_eq}
w_t=-w_{xxxx}-\partial_x\Xi(w_x,w_{xx},w_{xxx}),
\end{equation}
where 
\begin{equation}\label{def_Xi}
\Xi(w_x,w_{xx},w_{xxx}):=\biggl(\frac1{(1+w_x^2)^2}-1\biggr)w_{xxx}-\frac{3w_x(w_{xx})^2}{(1+w_x^2)^3}. 
\end{equation}
Also, by virtue of \eqref{formula_1}, we see that \eqref{eq:161031a4} is equivalent to \eqref{3rd-bc}. 
Since $w_x(0,t)=\tan\beta$ by \eqref{eq:161031a3}, the second boundary condition \eqref{eq:161031a4} is represented as 
\begin{equation}\label{transformed_2ndbc}
w_{xxx}(0,t)=c_\beta(w_{xx}(0,t))^2,
\end{equation}
where $c_\beta$ is given by \eqref{c_beta}. Consequently, we arrive at the following initial boundary value problem 
for a bounded self-similar solution. 
\begin{equation}\label{SSP}
\left\{\begin{array}{l}
w_t=-w_{xxxx}-\partial_x\Xi(w_x,w_{xx},w_{xxx}),\,\,\ x>0,\ t>0, \\
w_x(0,t)=\tan\beta,\,\,\ t>0, \\
w_{xxx}(0,t)=c_\beta(w_{xx}(0,t))^2,\,\,\ t>0, \\
w(x,0)=0,\,\,\ x\in\mathbb{R}_+.
\end{array}\right.
\end{equation}
In the following sections, our aim is to establish the existence and the uniqueness of a global-in-time solution to \eqref{SSP} 
provided that $\beta$ is sufficiently small. Indeed, let $w$ be a solution to \eqref{SSP} and let $w_\ast$ be the self-similar solution 
as in Definition \ref{def:self-similar}. Then $w_\ast$ takes the form $w_{\ast}(x,t)=t^{\frac14}W(x/t^{\frac14})$, where 
$W(y):=w_\ast(y,1)$. If $|W|$ is bounded, $w_\ast$ satisfies the initial condition $w_\ast(x,0)\equiv0$. 
Thus $w_\ast$ is a solution to \eqref{SSP}. Given the uniqueness of the solution to \eqref{SSP}, we see that 
\[
w(x,t)=w_\ast(x,t)=t^{\frac14}W\Bigl(\frac{x}{t^{\frac14}}\Bigr).
\]
That is, the existence of a bounded self-similar solution follows from the unique solvability of \eqref{SSP}. 

\begin{remark}\label{rem:reg_wxx2}
For $0<s<t\le T$, we have 
\begin{align*}
&\hspace*{-10pt}
|(w_{xx}(0,t))^2-(w_{xx}(0,s))^2| \\[0.05cm]
=&\,|w_{xx}(0,t)+w_{xx}(0,s)||w_{xx}(0,t)-w_{xx}(0,s)| \\
\le&\,2\sup_{t\in(0,T]}t^{\frac14}|w_{xx}(0,t)| 
\biggl(\sup_{\begin{subarray}{c}s,t\in(0,T] \\ s<t\end{subarray}}
s^{\frac{2+\gamma}4}\,\frac{|w_{xx}(0,t)-w_{xx}(0,s)|}{|t-s|^{\frac{1+\gamma}4}}\biggr)
s^{-\frac{3+\gamma}4}(t-s)^{\frac{1+\gamma}4}
\end{align*}
Thus, if $w_{xx}(0,\,\cdot\,)\in C_{\frac{2+\gamma}4}^{\frac{1+\gamma}4}((0,T];\mathbb{R})$, then 
$(w_{xx}(0,\,\cdot\,))^2\in C_{\frac{3+\gamma}4}^{\frac{1+\gamma}4}((0,T];\mathbb{R})$. 
\end{remark}

\section{Linear problem}\label{sec:LinearProblem}

Let us derive the representation of the solution $U$ to the linear problem \eqref{LP}. 
For $x\in\mathbb{R}$ and $t>0$, define the function $G(x,t)$ as 
\begin{equation}\label{GreenFunc}
G(x,t):=\frac1{2\pi}\int_{\mathbb{R}}e^{-\xi^4t+\textbf{i}x\xi}\,d\xi=\frac1{\pi}\int_0^\infty e^{-\xi^4t}\cos(x\xi)\,d\xi.
\end{equation}
Then $G(x,t)$ is the Green function for the fourth-order diffusion equation on $\mathbb{R}$. 
Applying an argument similar to \cite[the proof of Theorem 3.2]{cui;01} or \cite[Appendix B]{ish;koh;miy;sak;25}, 
we obtain the following lemma.
%
\begin{lemma}\label{lem:Cui}
There exist $C,\nu>0$ such that for all $\ell,k\in\mathbb{Z}_+:=\{m\in\mathbb{Z}\,|\,m\ge0\}$ and $(x,t)\in\mathbb{R}\times(0,\infty)$
\begin{align*}
|\partial_t^\ell\partial_x^k G(x,t)|
\le C\bigg\{1+\biggl(\frac{|x|}{t^{\frac14}}\biggr)^{\frac{4\ell+k}3}\biggr\}\,t^{-\frac{4\ell+k+1}4}e^{-\nu t^{-\frac13}|x|^{\frac43}}.
\end{align*}
\end{lemma}
%
\noindent
Note that this lemma yields that $G(x,t)$ is a rapidly decreasing function with respect to $x$ on $\mathbb{R}$. 

We define the function $K(x,y,t)$ as 
\begin{align}
K(x,y,t):=&\,G(x-y,t)+G(x+y,t) \notag \\
=&\,\frac1{\pi}\int_0^\infty e^{-\xi^4t}\bigl\{\cos((x-y)\xi)+\cos((x+y)\xi)\bigr\}\,d\xi \label{GreenTypeK}
\end{align}
for $x,y\in\mathbb{R}_+$ and $t>0$. Note that $K(x,y,t)$ is the Green function for the fourth-order diffusion equation 
on $\mathbb{R}_+$ with the Neumann-type boundary conditions, that is, $U_x(x,0)=U_{xxx}(x,0)=0$. 
Using $K(x,y,t)$, the solution $U(x,t)$ to the linear problem \eqref{LP} is represented as 
\begin{equation}\label{U_LP}
U(x,t)=U_1(x,t)\tan\beta+c_\beta U_2(x,t),
\end{equation}
where 
\[
U_1(x,t)=\int_0^t\partial_y^2K(x,0,t-\tau)\,d\tau, \quad 
U_2(x,t)=\int_0^tK(x,0,t-\tau)b(\tau)\,d\tau.
\]

Let us verify that $U$ given by \eqref{U_LP} is the solution to the linear problem \eqref{LP}. 
Since Lemma \ref{lem:Cui} implies 
\[
\lim_{\tau\to t^-}\partial_y^2K(x,0,t-\tau)=\lim_{\tau\to t^-}K(x,0,t-\tau)=0,
\]
this fact and the definition of $K$ yield that $U$ satisfies $U_t=-U_{xxxx}$. In order to verify that $U$ satisfies the boundary conditions 
and the initial condition for \eqref{LP}, we derive the expression for the derivatives of $U_i(i=1,2)$ with respect to $x$ and establish 
the necessary inequalities. By the definition of $K(x,y,t)$ and a simple calculation, we have
\begin{align}
U_1(x,t)=&\,-\frac2{\pi}\int_0^\infty\frac{1-e^{-\xi^4t}}{\xi^2}\cos(x\xi)\,d\xi, \label{int_2ndD_K} \\
U_2(x,t)=&\,\frac2{\pi}\int_0^t\int_0^\infty b(\tau)e^{-\xi^4(t-\tau)}\cos(x\xi)\,d\xi d\tau. \label{int_Kb} 
\end{align}
Using an argument based on complex integrals, it follows that for $x\in\mathbb{R}_+$ 
\[
\int_0^\infty\frac{\sin(x\xi)}{\xi}\,d\xi=\frac{\pi}2, 
\]
so that we obtain
\begin{equation}\label{1stD_U1}
\partial_xU_1(x,t)
=\frac2{\pi}\int_0^\infty\frac{1-e^{-\xi^4t}}{\xi}\sin(x\xi)\,d\xi
=1-\frac2{\pi}\int_0^\infty\frac{e^{-\xi^4t}}{\xi}\sin(x\xi)\,d\xi.
\end{equation}
This implies that
\begin{align}
\partial_x^2U_1(x,t)=&\,-\frac2{\pi}\int_0^\infty e^{-\xi^4t}\cos(x\xi)\,d\xi, \label{2ndD_U1} \\
\partial_x^3U_1(x,t)=&\,\frac2{\pi}\int_0^\infty \xi e^{-\xi^4t}\sin(x\xi)\,d\xi. \label{3rdD_U1}
\end{align}
Regarding $U_2$, we see that for $k\in\{1,2,3\}$
\begin{equation}\label{kthD_U2_1}
\partial_x^kU_2(x,t)
=\frac2{\pi}\int_0^t\int_0^\infty b(\tau)\xi^ke^{-\xi^4(t-\tau)}\cos\Bigl(x\xi+\frac{k\pi}2\Bigr)\,d\xi d\tau.
\end{equation}
Furthermore, transforming \eqref{kthD_U2_1} into 
\begin{align*}
\partial_x^kU_2(x,t)
=&\,\frac2{\pi}\biggl\{b(t)\int_0^t\int_0^{\infty}\xi^ke^{-\zeta^4(t-\tau)}\cos\Bigl(x\xi+\frac{k\pi}2\Bigr)\,d\xi d\tau \\
&+\int_0^t\int_0^{\infty}(b(\tau)-b(t))\xi^ke^{-\xi^4(t-\tau)}\cos\Bigl(x\xi+\frac{k\pi}2\Bigr)\,d\xi d\tau\biggr\}, 
\end{align*}
and taking account of 
\begin{align*}
&\hspace*{-8pt}
\int_0^t\int_0^{\infty}\xi^ke^{-\xi^4(t-\tau)}\cos\Bigl(x\xi+\frac{k\pi}2\Bigr)\,d\xi d\tau \\
=&\,\int_0^\infty\frac{1-e^{-\xi^4t}}{\xi^4}\,\xi^k\cos\Bigl(x\xi+\frac{k\pi}2\Bigr)\,d\xi,
\end{align*}
we arrive at 
\begin{align}\label{kthD_U2_2}
\partial_x^kU_2(x,t)
=&\,\frac2{\pi}\biggl\{b(t)\int_0^\infty\frac{1-e^{-\xi^4t}}{\xi^4}\,\xi^k\cos\Bigl(x\xi+\frac{k\pi}2\Bigr)\,d\xi \notag \\
&+\int_0^t\int_0^{\infty}(b(\tau)-b(t))\xi^ke^{-\xi^4(t-\tau)}\cos\Bigl(x\xi+\frac{k\pi}2\Bigr)\,d\xi d\tau\biggr\}.
\end{align}
As a preliminary consideration, we state the following lemma. 
%
\begin{lemma}\label{lem:integral-est}
Let $b\in C_{\frac{3+\gamma}4}^{\frac{1+\gamma}4}((0,T];\mathbb{R})$ with $\gamma\in(0,1)$. 
For $\sigma,t>0$, $p>-1$, $0\le q_1<3$, and $0\le q_2\le4$, it holds that 
\begin{align}
&\int_0^{\infty}\xi^pe^{-\xi^4\sigma}\,d\xi
=\frac14\,\Gamma\Bigl(\frac{p+1}4\Bigr)\sigma^{-\frac{p+1}4}, \label{ineq_1} \\
&\int_0^t\int_0^{\infty}|b(\tau)|\,\xi^{q_1}e^{-\xi^4(t-\tau)}\,d\xi d\tau \notag \\
&\qquad\le\frac14\,\Gamma\Bigl(\frac{q_1+1}4\Bigr)B\Bigl(\frac12,\frac{3-q_1}4\Bigr)
\|b\|_{B_{\frac12}((0,T];\mathbb{R})}t^{\frac{1-q_1}4}, \label{ineq_2} \\
&\int_0^t\int_0^{\infty}|b(\tau)-b(t)|\,\xi^{q_2}e^{-\xi^4(t-\tau)}\,d\xi d\tau \notag \\
&\qquad\le\frac14\,\Gamma\Bigl(\frac{q_2+1}4\Bigr)B\Bigl(\frac{1-\gamma}4,\frac{\gamma+4-q_2}4\Bigr)
\|b\|_{C_{\frac{3+\gamma}4}^{\frac{1+\gamma}4}((0,T];\mathbb{R})}t^{\frac{1-q_2}4}, \label{ineq_3}
\end{align}
where $\Gamma(\,\cdot\,)$ and $B(\,\cdot\,,\,\cdot\,)$ are the Gamma function and the Beta function, respectively. That is, 
\[
\Gamma(p)=\int_0^\infty\eta^{p-1}e^{-\eta}\,d\eta, \quad 
B(\alpha,\beta)=\int_0^1\sigma^{\alpha-1}(1-\sigma)^{\beta-1}\,d\sigma
\] 
for $p>0$ and $\alpha,\beta>0$. 
\end{lemma}
%
\begin{proof}
 \eqref{ineq_1} follows from the change of variable by $\eta=\xi^4\sigma$. Concerning \eqref{ineq_2} and \eqref{ineq_3}, 
the change of variable by $\eta=\xi^4(t-\tau)$ and $b\in C_{\frac{3+\gamma}4}^{\frac{1+\gamma}4}((0,T];\mathbb{R})$ 
yield that  
\begin{align*}
&\hspace*{-8pt}
\int_0^t\int_0^{\infty}|b(\tau)|\,\xi^{q_1}e^{-\xi^4(t-\tau)}\,d\xi d\tau \\
\le&\,\frac14\,\Gamma\Bigl(\frac{q_1+1}4\Bigr)\|b\|_{B_{\frac12}((0,T];\mathbb{R})}
\int_0^t(t-\tau)^{-\frac{q_1+1}4}\tau^{-\frac12}\,d\tau \\
=&\,\frac14\,\Gamma\Bigl(\frac{q_1+1}4\Bigr)B\Bigl(\frac12,\frac{3-q_1}4\Bigr)\|b\|_{B_{\frac12}((0,T];\mathbb{R})}t^{\frac{1-q_1}4}, \\
&\hspace*{-8pt}
\int_0^t\int_0^{\infty}|b(\tau)-b(t)|\,\xi^{q_2}e^{-\xi^4(t-\tau)}\,d\xi d\tau \\
\le&\,\frac14\,\Gamma\Bigl(\frac{q_2+1}4\Bigr)\|b\|_{C_{\frac{3+\gamma}4}^{\frac{1+\gamma}4}((0,T];\mathbb{R})}
\int_0^t(t-\tau)^{\frac{\gamma-q_2}4}\tau^{-\frac{3+\gamma}4}\,d\tau \\
=&\,\frac14\,\Gamma\Bigl(\frac{q_2+1}4\Bigr)B\Bigl(\frac{1-\gamma}4,\frac{\gamma+4-q_2}4\Bigr)
\|b\|_{C_{\frac{3+\gamma}4}^{\frac{1+\gamma}4}((0,T];\mathbb{R})}t^{\frac{1-q_2}4}.
\end{align*}
This completes the proof. 
\end{proof}
\noindent
Lemma \ref{lem:integral-est} gives the following lemma.

\begin{lemma}\label{lem:limit}
Let $U_i(i=1,2)$ be given by \eqref{int_2ndD_K} and \eqref{int_Kb}. Then it holds that for each $t>0$
\begin{align}
&\lim_{x\to0^+}\partial_xU_1(x,t)=1, \quad \lim_{x\to0^+}\partial_xU_2(x,t)=0, \label{1stD_limit} \\
&\lim_{x\to0^+}\partial_x^3U_1(x,t)=0, \quad \lim_{x\to0^+}\partial_x^3U_2(x,t)=b(t), \label{3rdD_limit}
\end{align}
and for $x\in\mathbb{R}_+$
\begin{equation}\label{t-0_limit}
\lim_{t\to0^+}U_1(x,t)=0, \quad \lim_{t\to0^+}U_2(x,t)=0. 
\end{equation}
\end{lemma}

\begin{proof}
We first prove \eqref{1stD_limit} and \eqref{3rdD_limit}. Let $x>0$. Using \eqref{1stD_U1}, \eqref{ineq_1} with $p=0$ and $\sigma=t$, 
\eqref{kthD_U2_1} with $k=1$, and \eqref{ineq_2} with $q_1=2$, we have 
\begin{align*}
&|\partial_xU_1(x,t)-1|\le\frac{2x}{\pi}\int_0^\infty e^{-\xi^4t}\,\biggl|\frac{\sin(x\xi)}{x\xi}\biggr|\,d\xi
\le\frac{x}{2\pi}\,\Gamma\Bigl(\frac14\Bigr)t^{-\frac14}, \\
&|\partial_xU_2(x,t)|\le\frac{2x}{\pi}\int_0^t\int_0^\infty|b(\tau)|\xi^2e^{-\xi^4(t-\tau)}\,\biggl|\frac{\sin(x\xi)}{x\xi}\biggr|\,d\xi \\
&\hspace*{57pt}\le\frac{x}{2\pi}\,\Gamma\Bigl(\frac34\Bigr)B\Bigl(\frac12,\frac14\Bigr)\|b\|_{B_{\frac12}((0,T];\mathbb{R})}t^{-\frac14}
\end{align*}
for each $t>0$. This implies \eqref{1stD_limit}. As for the third derivative, it follows from \eqref{3rdD_U1} and \eqref{ineq_1} with 
$p=2$ and $\sigma=t$ that for each $t>0$
\[
|\partial_x^3U_1(x,t)|\le\frac{2x}{\pi}\int_0^\infty\xi^2e^{-\xi^4t}\,\biggl|\frac{\sin(x\xi)}{x\xi}\biggr|\,d\xi
\le\frac{x}{2\pi}\,\Gamma\Bigl(\frac34\Bigr)t^{-\frac34}.
\]
This gives the first limit of \eqref{3rdD_limit}. Furthermore, considering \eqref{kthD_U2_2} with $k=3$ and recalling \eqref{1stD_U1}, 
we find
\begin{align}\label{3rdD_U2}
\partial_x^3U_2(x,t)
=&\,b(t)\partial_xU_1(x,t) \notag \\
&\,+\frac2{\pi}\int_0^t\int_0^{\infty}(b(\tau)-b(t))\xi^3e^{-\xi^4(t-\tau)}\sin(x\xi)\,d\xi d\tau. 
\end{align}
Since \eqref{ineq_3} with $q_2=4$ yields that 
\begin{align*}
&|(\text{The second term of $\partial_x^3U_2(x,t)$})| \\
&\le\,\frac{2x}{\pi}\int_0^t\int_0^{\infty}|b(\tau)-b(t)|\xi^4e^{-\xi^4(t-\tau)}\,\biggl|\frac{\sin(x\xi)}{x\xi}\biggr|\,d\xi d\tau \\
&\le\,\frac{x}{2\pi}\,\Gamma\Bigl(\frac54\Bigr)B\Bigl(\frac{1-\gamma}4,\frac{\gamma}4\Bigr)
\|b\|_{C_{\frac{3+\gamma}4}^{\frac{1+\gamma}4}((0,T];\mathbb{R})}t^{-\frac34}
\end{align*}
for each $t>0$, this and the first limit of \eqref{1stD_limit} imply the second limit of \eqref{3rdD_limit}. 

Let us prove \eqref{t-0_limit}. As for $U_1$, \eqref{int_2ndD_K} gives 
\[
|U_1(x,t)|\le\frac2{\pi}\int_0^{\infty}\frac{1-e^{-\xi^4t}}{\xi^2}\,d\xi. 
\]
Applying the change of variable by $\eta=\xi^4t$ and the integration by parts, we have 
\begin{align*}
\int_0^{\infty}\frac{1-e^{-\xi^4t}}{\xi^2}\,d\xi
=&\,\frac14t^{\frac14}\int_0^{\infty}\eta^{-\frac54}(1-e^{-\eta})\,d\eta \\
=&\,t^{\frac14}\int_0^{\infty}\eta^{-\frac14}e^{-\eta}\,d\eta=\Gamma\Bigl(\frac34\Bigr)t^{\frac14}
\end{align*}
since the l'Hospital's rule implies that 
\[
\lim_{\eta\to0^+}\frac{1-e^{-\eta}}{\eta^{\frac14}}=\lim_{\eta\to0^+}\frac{e^{-\eta}}{\,\dfrac14\eta^{-\frac34}\,}
=\lim_{\eta\to0^+}4\eta^{\frac34}e^{-\eta}=0.
\]
Thus it follows that 
\begin{equation}\label{bound_U1}
|U_1(x,t)|\le\frac2{\pi}\,\Gamma\Bigl(\frac34\Bigr)t^{\frac14}.
\end{equation}
Regarding $U_2$, \eqref{int_Kb} and \eqref{ineq_2} with $q_1=0$ yield that 
\begin{align}
|U_2(x,t)|
\le&\,\frac1{2\pi}\,\Gamma\Bigl(\frac14\Bigr)B\Bigl(\frac12,\frac34\Bigr)\|b\|_{B_{\frac12}((0,T];\mathbb{R})}t^{\frac14}. \label{bound_U2}
\end{align}
Hence \eqref{bound_U1} and \eqref{bound_U2} give \eqref{t-0_limit}. 
\end{proof}
\noindent
By virtue of \eqref{U_LP} and Lemma \ref{lem:limit}, we can verify that $U$ satisfies the boundary conditions 
and the initial condition for \eqref{LP}. 

Let us show the estimates of $U$ in 
\[
B_{\frac{2+\gamma}4}((0,T];BUC^{3+\gamma}(\mathbb{R}_+))
\cap C_{\frac{2+\gamma}4}^{\frac{1+\gamma}4}((0,T];BUC^2(\mathbb{R}_+))
\]
with $\gamma\in(0,1)$. Taking account of Remark \ref{rem:reg_wxx2}, we obtain the following lemma. 
Hereafter, we use the symbol $\,\lesssim\,$ if a constant of each inequality depends only on unessential parameters. 
That is, we simply write $a\lesssim b$ instead of $a\le Cb$. 

\begin{proposition}\label{prop:linear-est}
Let $b\in C_{\frac{3+\gamma}4}^{\frac{1+\gamma}4}((0,T];\mathbb{R})$ with $\gamma\in(0,1)$ and 
let $U$ be the solution to the linear problem \eqref{LP} given by \eqref{U_LP}. Then there exist $C_i>0\,(i=1,2)$ such that 
\begin{align*}
&\|U\|_{B_{\frac{2+\gamma}4}((0,T];BUC^{3+\gamma}(\mathbb{R}_+))}
+\|U\|_{C_{\frac{2+\gamma}4}^{\frac{1+\gamma}4}((0,T];BUC^2(\mathbb{R}_+))} \\
&\le\bigl(C_1+C_2\|b\|_{C_{\frac{3+\gamma}4}^{\frac{1+\gamma}4}((0,T];\mathbb{R})}\bigr)\tan\beta.
\end{align*}
\end{proposition}

\begin{proof}
\textit{Step 1.} 
Let us derive the estimate of $\|U\|_{B_{\frac{2+\gamma}4}((0,T];BUC^{3+\gamma}(\mathbb{R}_+))}$. 
As for $\|U(t)\|_\infty$, recalling \eqref{bound_U1} and \eqref{bound_U2}, 
%
%
there exist $C_i>0\,(i=1,2)$ such that 
\[
\frac1{T^{\frac{3+\gamma}4}}\sup_{t\in[0,T]}t^{\frac{2+\gamma}4}\|U(t)\|_\infty
\le\bigl(C_1+C_2\|b\|_{B_{\frac12}((0,T];\mathbb{R})}\bigr)\tan\beta.
\]
Let us consider $\|\partial_xU_1(t)\|_\infty$. For any $\rho>0$, we observe 
\begin{align*}
(\partial_xU_1)(y,t)
=&\,\frac1{\rho}\int_y^{y+\rho}\bigl\{(\partial_xU_1)(y,t)-(\partial_xU_1)(x,t)\bigr\}\,dx \\
&\,+\frac1{\rho}\bigl\{U_1(y+\rho,t)-U_1(y,t)\bigr\}. 
\end{align*}
Using \eqref{1stD_U1} and \eqref{ineq_1} with $p=0$ and applying the mean value theorem, 
we see that for $y\le x\le y+\rho$
\begin{align*}
|(\partial_xU_1)(x,t)-(\partial_xU_1)(y,t)|
\le&\,\frac2{\pi}\int_0^\infty\frac{e^{-\xi^4t}}{\xi}|\sin(x\xi)-\sin(y\xi)|\,d\xi \\
=&\,\frac2{\pi}(x-y)\int_0^\infty e^{-\xi^4t}|\cos((y+\theta(x-y))\xi)|\,d\xi \\
\le&\,\frac1{2\pi}\,\Gamma\Bigl(\frac14\Bigr)t^{-\frac14}(x-y),
\end{align*}
where $\theta\in(0,1)$. 
Integrating on the interval $[y,y+\rho]$, we obtain the inequality 
\[
\frac1{\rho}\int_y^{y+\rho}|(\partial_xU_1)(y,t)-(\partial_xU_1)(x,t)|\,dx
\le\frac1{4\pi}\,\Gamma\Bigl(\frac14\Bigr)t^{-\frac14}\rho.
\]
On the other hand, it follows from \eqref{bound_U1} that  
\begin{align*}
\frac1{\rho}|U_1(y+\rho,t)-U_1(y,t)|\le\frac2{\rho}\|U_1(t)\|_\infty\le\frac4{\pi}\,\Gamma\Bigl(\frac34\Bigr)\frac{t^{\frac14}}{\rho}. 
\end{align*}
As a result, for any $\rho>0$
\[
\|\partial_xU_1(t)\|_\infty
\le\frac1{4\pi}\,\Gamma\Bigl(\frac14\Bigr)t^{-\frac14}\rho+\frac4{\pi}\,\Gamma\Bigl(\frac34\Bigr)\frac{t^{\frac14}}{\rho}.
\]
Set 
\[
h(\rho):=C_{\ast,1}t^{-\frac14}\rho+\frac{C_{\ast,2}t^{\frac14}}{\rho}, \quad 
C_{\ast,1}=\frac1{4\pi}\,\Gamma\Bigl(\frac14\Bigr),\,\ C_{\ast,2}=\frac4{\pi}\,\Gamma\Bigl(\frac34\Bigr).
\]
$h(\rho)$ is minimal at 
$\rho=(C_{\ast,2}t^{\frac14}/C_{\ast,1}t^{-\frac14})^{\frac12}=(C_{\ast,2}/C_{\ast,1})^{\frac12}t^{\frac14}\,(=:\rho_\ast)$ 
and its value is 
\[
h(\rho_\ast)=2(C_{\ast,1}C_{\ast,2})^{\frac12}. 
\]
Thus we have 
\begin{equation}\label{bound_1stD_U1}
\|\partial_xU_1(t)\|_\infty\le h(\rho_\ast)=2(C_{\ast,1}C_{\ast,2})^{\frac12}.
\end{equation}
Let us show the boundedness of  $\|\partial_x^kU_1(t)\|_\infty\,(k=2,3)$ and $\|\partial_x^\ell U_2(t)\|_\infty\,(\ell=1,2)$. 
Recalling \eqref{2ndD_U1}, \eqref{3rdD_U1}, and \eqref{kthD_U2_1} and applying \eqref{ineq_1} with $p=0,1$ 
and \eqref{ineq_2} with $q_1=\ell$, we see that for $k=2,3$ and $\ell=1,2$
\begin{align}
\|\partial_x^kU_1(t)\|_\infty
\le&\,\frac1{2\pi}\,\Gamma\Bigl(\frac{k-1}4\Bigr)t^{\frac{1-k}4},  \label{bound_kthD_U1} \\
\|\partial_x^\ell U_2(t)\|_\infty
\le&\,\frac1{2\pi}\,\Gamma\Bigl(\frac{\ell+1}4\Bigr)B\Bigl(\frac12,\frac{3-\ell}4\Bigr)\|b\|_{B_{\frac12}((0,T];\mathbb{R})}t^{\frac{1-\ell}4}. 
\label{bound_lthD_U2}
\end{align}
\eqref{bound_1stD_U1}, \eqref{bound_kthD_U1} with $k=2$ and \eqref{bound_lthD_U2} yield that there exist $C_i > 0\,(i=1,2)$ such that
\[
\sum_{k=1}^2\frac{1}{T^{\frac{3-k+\gamma}4}}\sup _{t\in(0,T]}t^{\frac{2+\gamma}4}\|\partial_x^kU(t)\|_{\infty}
\le\bigl(C_1+C_2\|b\|_{B_{\frac12}((0,T];\mathbb{R})}\bigr)\tan\beta.
\]
Let us consider $\|\partial_x^3U_2(t)\|_\infty$. Recalling \eqref{3rdD_U2} and using \eqref{bound_1stD_U1}, 
the first term of $\|\partial_x^3U_2(t)\|_\infty$ is estimated by 
\begin{align*}
(\text{The first term of $\|\partial_x^3U_2(t)\|_\infty$})
\le&\,|b(t)|\|\partial_xU_1(t)\|_\infty \\
\le&\,2(C_{\ast,1}C_{\ast,2})^{\frac12}\|b\|_{B_{\frac12}((0,T];\mathbb{R})}t^{-\frac12}.
\end{align*}
Furthermore, it follows from \eqref{ineq_3} with $q_2=3$ that 
\begin{align*}
(\text{The second term of $\|\partial_x^3U_2(t)\|_\infty$}) 
\le&\,\frac1{2\pi}B\Bigl(\frac{1-\gamma}4,\frac{1+\gamma}4\Bigr)
\|b\|_{C_{\frac{3+\gamma}4}^{\frac{1+\gamma}4}((0,T];\mathbb{R})}t^{-\frac12}.
\end{align*}
These inequalities and \eqref{bound_kthD_U1} with $k=3$ imply that there exist $C_i > 0\,(i=1,2)$ such that 
\[
\frac1{T^{\frac{\gamma}4}}\sup_{t\in(0,T]}t^{\frac{2+\gamma}4}\|\partial_x^3U(t)\|_\infty
\le\bigl(C_1+C_2\|b\|_{B_{\frac12}((0,T];\mathbb{R})}\bigr)\tan\beta.
\]
Finally, let us derive the estimates of $[\partial_x^3U_1(t)]_\gamma$ and $[\partial_x^3U_2(t)]_\gamma$. 
Since $[\sin(\,\cdot\,\xi)]_\gamma\le c_0\xi^{\gamma}$ is obtained, we have
\begin{align*}
[\partial_x^3U_1(t)]_\gamma
\le&\,\frac{2c_0}{\pi}\int_0^{\infty}\xi^{1+\gamma}e^{-\xi^4t}\,d\xi, \\
[\partial_x^3U_2(t)]_\gamma
\le&\,\frac{2c_0}{\pi}|b(t)|\int_0^{\infty}\xi^{-1+\gamma}e^{-\xi^4t}\,d\xi \\
&\,+\frac{2c_0}{\pi}\int_0^{\infty}\int_0^t|b(\tau)-b(t)|\,\xi^{3+\gamma}e^{-\xi^4(t-\tau)}\,d\tau d\xi \\
=:&\,I_1+I_2. 
\end{align*}
To derive these inequalities, we have used \eqref{3rdD_U1} and \eqref{3rdD_U2} with \eqref{1stD_U1}. 
Regarding $[\partial_x^3U_1(t)]_\gamma$, \eqref{ineq_1} with $p=1+\gamma$ implies that 
\[
[\partial_x^3U_1(t)]_\gamma\le\frac{c_0}{2\pi}\,\Gamma\Bigl(\frac{2+\gamma}4\Bigr)t^{-\frac{2+\gamma}4}.
\]
As for $[\partial_x^3U_2(t)]_\gamma$, it follows from \eqref{ineq_2} with $q_1=-1+\gamma$ 
and \eqref{ineq_3} with $q_2=3+\gamma$ that
\begin{align*}
I_1\lesssim&\,\Gamma\Bigl(\frac{\gamma}4\Bigr)\|b\|_{B_{\frac12}((0,T];\mathbb{R})}t^{-\frac{2+\gamma}4}, \\
I_2\lesssim&\,\Gamma\Bigl(1+\frac{\gamma}4\Bigr)B\Bigl(\frac{1-\gamma}4,\frac14\Bigr)
\|b\|_{C_{\frac{3+\gamma}4}^{\frac{1+\gamma}4}((0,T];\mathbb{R})}t^{-\frac{2+\gamma}4}.
\end{align*}
Thus we have
\[
[\partial_x^3U_2(t)]_\gamma
\lesssim\biggl\{\Gamma\Bigl(\frac{\gamma}4\Bigr)+\Gamma\Bigl(1+\frac{\gamma}4\Bigr)B\Bigl(\frac{1-\gamma}4,\frac14\Bigr)\biggr\}
\|b\|_{C_{\frac{3+\gamma}4}^{\frac{1+\gamma}4}((0,T];\mathbb{R})}t^{-\frac{2+\gamma}4}.
\]
As a result, there exist $C_i > 0\,(i=1,2)$ such that 
\[
\sup_{t\in(0,T]}t^{\frac{2+\gamma}4}[\partial_x^3U(t)]_\gamma
\le\bigl(C_1+C_2\|b\|_{C_{\frac{3+\gamma}4}^{\frac{1+\gamma}4}((0,T];\mathbb{R})}\bigr)\tan\beta.
\]

\textit{Step 2.} 
Let us derive the estimate of $\|U\|_{C_{\frac{2+\gamma}4}^{\frac{1+\gamma}4}((0,T];BUC^2(\mathbb{R}_+))}$. 
As a first step, we consider the estimate of $\partial_x^kU_1\,(k=0,1,2)$. 
Then it follows that for $0<s<t\le T$
\begin{align*}
&\hspace*{-8pt}
\partial_x^kU_1(x,t)-\partial_x^kU_1(x,s) \\
=&\,-\frac2{\pi}\int_0^{\infty}\biggl(\frac{1-e^{-\xi^4t}}{\xi^2}-\frac{1-e^{-\xi^4s}}{\xi^2}\biggr)\xi^k\cos\Bigl(x\xi+\frac{k\pi}2\Bigr)\,d\xi \\
=&\,-\frac2{\pi}\int_s^t\int_0^{\infty}\xi^{k+2}e^{-\xi^4\sigma}\cos\Bigl(x\xi+\frac{k\pi}2\Bigr)\,d\xi d\sigma.
\end{align*}
This and \eqref{ineq_1} with $p=k+2$ imply that 
\begin{align*}
&\hspace*{-8pt}
|\partial_x^kU_1(x,t)-\partial_x^kU_1(x,s)| \\
\le&\,\frac1{2\pi}\Gamma\Bigl(\frac{k+3}4\Bigr)\int_s^t\sigma^{-\frac{k+3}4}\,d\sigma
\lesssim s^{-\frac{2+\gamma}4}\int_s^t\sigma^{-\frac{k+1}4+\frac{\gamma}4}\,d\sigma \\
\lesssim&\,s^{-\frac{2+\gamma}4}(t^{\frac{3-k+\gamma}4}-s^{\frac{3-k+\gamma}4})
\lesssim s^{-\frac{2+\gamma}4}(t-s)^{\frac{1+\gamma}4}T^{\frac{2-k}4}.
\end{align*}
Thus we see that there exists $C_1>0$ such that for $0<s<t\le T$
\[
\sum_{k=0}^2\frac1{T^{\frac{2-k}4}}\sup _{\begin{subarray}{c}s,t\in(0,T] \\ s<t\end{subarray}}s^{\frac{2+\gamma}4}
\frac{\|\partial_x^kU_1(\,\cdot\,,t)-\partial_x^kU_1(\,\cdot\,,s)\|_\infty}{(t-s)^{\frac{1+\gamma}4}}\le C_1.
\]
Let us consider the estimate of $\partial_x^kU_2\,(k=0,1,2)$. Using \eqref{kthD_U2_2}, 
we observe that $\partial_x^kU_2(x,t)-\partial_x^kU_2(x,s)$ is transformed into 
\[
\partial_x^kU_2(x,t)-\partial_x^kU_2(x,s)=:\frac2{\pi}\sum_{i=1}^5J_i
\]
for $0<s<t\le T$, where
\begin{align*}
J_1:=&\,b(t)\int_0^{\infty}\frac{e^{-\xi^4s}-e^{-\xi^4t}}{\xi^4}\,\xi^k\cos\Bigl(x\xi+\frac{k\pi}2\Bigr)\,d\xi, \\
J_2:=&\,(b(t)-b(s))\int_0^{\infty}\frac{1-e^{-\xi^4s}}{\xi^4}\,\xi^k\cos\Bigl(x\xi+\frac{k\pi}2\Bigr)\,d\xi, \\
J_3:=&\,\int_s^t\int_0^\infty(b(t)-b(\tau))\xi^ke^{-\xi^4(t-\tau)}\cos\Bigl(x\xi+\frac{k\pi}2\Bigr)\,d\xi d\tau, \\
J_4:=&\,(b(t)-b(s))\int_0^s\int_0^\infty\xi^ke^{-\xi^4(t-\tau)}\cos\Bigl(x\xi+\frac{k\pi}2\Bigr)\,d\xi d\tau, \\
J_5:=&\,\int_0^s\int_0^\infty(b(s)-b(\tau))\xi^k(e^{-\xi^4(t-\tau)}-e^{-\xi^4(s-\tau)})\cos\Bigl(x\xi+\frac{k\pi}2\Bigr)\,d\xi d\tau.
\end{align*}
Let us first derive the estimate of $J_1$. By virtue of \eqref{ineq_1} with $p=k$ and $b\in B_{\frac12}((0,T];\mathbb{R})$, 
we have 
\begin{align*}
|J_1|\le&\,|b(t)|\int_0^{\infty}e^{-\xi^4s}\,\frac{1-e^{-\xi^4(t-s)}}{\xi^4}\,\xi^k\,d\xi \\
\le&\,|b(t)|\int_0^{t-s}\int_0^{\infty}\xi^ke^{-\xi^4\sigma}\,d\xi d\sigma \\
\le&\,\frac14\,\Gamma\Bigl(\frac{k+1}4\Bigr)\|b\|_{B_{\frac12}((0,T];\mathbb{R})}
t^{-\frac12}\int_0^{t-s}\sigma^{-\frac{k+1}4}\,d\sigma \\
\lesssim&\,\|b\|_{B_{\frac12}((0,T];\mathbb{R})}s^{-\frac{2+\gamma}4}s^{\frac{\gamma}4}(t-s)^{\frac{3-k}4} \\
\lesssim&\,\|b\|_{C_{\frac{3+\gamma}4}^{\frac{1+\gamma}4}((0,T];\mathbb{R})}
s^{-\frac{2+\gamma}4}(t-s)^{\frac{1+\gamma}4}T^{\frac{2-k}4}.
\end{align*}
In the last inequality, we have used 
\[
(t-s)^{\frac{3-k}4}\le(t-s)^{\frac{1+\gamma}4+\frac{2-k-\gamma}4}
\le(t-s)^{\frac{1+\gamma}4}T^{\frac{2-k-\gamma}4}.
\] 
As for $J_2$, \eqref{ineq_1} with $p=k$ and $b\in C_{\frac{3+\gamma}4}^{\frac{1+\gamma}4}((0,T];\mathbb{R})$ imply that 
\begin{align*}
|J_2|\le&\,|b(t)-b(s)|\int_0^{\infty}\frac{1-e^{-\xi^4s}}{\xi^4}\,\xi^k\,d\xi \\
\le&\,|b(t)-b(s)|\int_0^s\int_0^\infty\xi^ke^{-\xi^4\sigma}\,d\xi d\sigma \\
\le&\,\frac14\,\Gamma\Bigl(\frac{k+1}4\Bigr)\|b\|_{C_{\frac{3+\gamma}4}^{\frac{1+\gamma}4}((0,T];\mathbb{R})}
s^{-\frac{3+\gamma}4}(t-s)^{\frac{1+\gamma}4}\int_0^s\sigma^{-\frac{k+1}4}\,d\sigma \\
\lesssim&\,\|b\|_{C_{\frac{3+\gamma}4}^{\frac{1+\gamma}4}((0,T];\mathbb{R})}
s^{-\frac{2+\gamma}4}(t-s)^{\frac{1+\gamma}4}T^{\frac{2-k}4}.
\end{align*}
Concerning $J_3$, an argument of the proof of \eqref{ineq_3} yields 
\begin{align*}
|J_3|\le&\,\int_s^t\int_0^\infty|b(t)-b(\tau)|\xi^ke^{-\xi^4(t-\tau)}\,d\xi d\tau \\
\le&\,\frac14\,\Gamma\Bigl(\frac{k+1}4\Bigr)\|b\|_{C_{\frac{3+\gamma}4}^{\frac{1+\gamma}4}((0,T];\mathbb{R})}
\int_s^t(t-\tau)^{\frac{\gamma-k}4}\tau^{-\frac{3+\gamma}4}\,d\tau \\
\lesssim&\,\|b\|_{C_{\frac{3+\gamma}4}^{\frac{1+\gamma}4}((0,T];\mathbb{R})}s^{-\frac{2+\gamma}4}(t-s)^{\frac{3-k+\gamma}4}
\int_s^t(t-\tau)^{-\frac34}\tau^{-\frac14}\,d\tau \\
\lesssim&\,B\Bigl(\frac14,\frac34\Bigr)\|b\|_{C_{\frac{3+\gamma}4}^{\frac{1+\gamma}4}((0,T];\mathbb{R})}
s^{-\frac{2+\gamma}4}(t-s)^{\frac{3-k+\gamma}4} \\
\lesssim&\,\|b\|_{C_{\frac{3+\gamma}4}^{\frac{1+\gamma}4}((0,T];\mathbb{R})}
s^{-\frac{2+\gamma}4}(t-s)^{\frac{1+\gamma}4}T^{\frac{2-k}4}.
\end{align*}
As regards $J_4$, it follows form \eqref{ineq_1} with $p=k$ and $\sigma=t-\tau$ that 
\begin{align*}
|J_4|\le&\,|b(t)-b(s)|\int_0^s\int_0^\infty\xi^ke^{-\xi^4(t-\tau)}\,d\xi d\tau \\
\le&\,\frac14\,\Gamma\Bigl(\frac{k+1}4\Bigr)\|b\|_{C_{\frac{3+\gamma}4}^{\frac{1+\gamma}4}((0,T];\mathbb{R})}
s^{-\frac{3+\gamma}4}(t-s)^{\frac{1+\gamma}4}\int_0^s(t-\tau)^{-\frac{k+1}4}\,d\tau \\
\lesssim&\,\|b\|_{C_{\frac{3+\gamma}4}^{\frac{1+\gamma}4}((0,T];\mathbb{R})}
s^{-\frac{3+\gamma}4}(t-s)^{\frac{1+\gamma}4}\int_0^s(s-\tau)^{-\frac{k+1}4}\,d\tau \\
\lesssim&\,\|b\|_{C_{\frac{3+\gamma}4}^{\frac{1+\gamma}4}((0,T];\mathbb{R})}
s^{-\frac{3+\gamma}4}(t-s)^{\frac{1+\gamma}4}s^{\frac{3-k}4} \\
\lesssim&\,\|b\|_{C_{\frac{3+\gamma}4}^{\frac{1+\gamma}4}((0,T];\mathbb{R})}
s^{-\frac{2+\gamma}4}(t-s)^{\frac{1+\gamma}4}T^{\frac{2-k}4}.
\end{align*}
Finally, using \eqref{ineq_1} with $p=k+4$ and $b\in C_{\frac{3+\gamma}4}^{\frac{1+\gamma}4}((0,T];\mathbb{R})$, 
the upper bound of $J_5$ is given by 
\begin{align*}
|J_5|\le&\,\int_0^s\int_0^\infty|b(s)-b(\tau)|\xi^k(e^{-\xi^4(s-\tau)}-e^{-\xi^4(t-\tau)})\,d\xi d\tau \\
=&\,\int_0^s|b(s)-b(\tau)|\int_{s-\tau}^{t-\tau}\biggl(\int_0^\infty\xi^{k+4}e^{-\xi^4\sigma}\,d\xi\biggr)d\sigma d\tau \\
\le&\,\frac14\,\Gamma\Bigl(\frac{k+5}4\Bigr)\|b\|_{C_{\frac{3+\gamma}4}^{\frac{1+\gamma}4}((0,T];\mathbb{R})}
\int_0^s\tau^{-\frac{3+\gamma}4}(s-\tau)^{\frac{1+\gamma}4}
\biggl(\int_{s-\tau}^{t-\tau}\sigma^{-\frac{k+5}4}\,d\sigma\biggr)d\tau \\
\lesssim&\,\|b\|_{C_{\frac{3+\gamma}4}^{\frac{1+\gamma}4}((0,T];\mathbb{R})}
\int_0^s\tau^{-\frac{3+\gamma}4}(s-\tau)^{-\frac{1+k}4}
\biggl(\int_{s-\tau}^{t-\tau}\sigma^{\frac{1+\gamma}4-1}\,d\sigma\biggr)d\tau \\
\lesssim&\,B\Bigl(\frac{1-\gamma}4,\frac{3-k}4\Bigr)\|b\|_{C_{\frac{3+\gamma}4}^{\frac{1+\gamma}4}((0,T];\mathbb{R})}
s^{-\frac{k+\gamma}4}(t-s)^{\frac{1+\gamma}4} \\
\lesssim&\,\|b\|_{C_{\frac{3+\gamma}4}^{\frac{1+\gamma}4}((0,T];\mathbb{R})}
s^{-\frac{2+\gamma}4}(t-s)^{\frac{1+\gamma}4}T^{\frac{2-k}4}.
\end{align*}
As a result, there exists $C_2>0$ such that for $0<s<t\le T$
\[
\sum_{k=0}^2\frac1{T^{\frac{2-k}4}}\sup _{\begin{subarray}{c}s,t\in(0,T] \\ s<t\end{subarray}}s^{\frac{2+\gamma}4}
\frac{\|\partial_x^kU_2(\,\cdot\,,t)-\partial_x^kU_2(\,\cdot\,,s)\|_\infty}{(t-s)^{\frac{1+\gamma}4}}
\le C_2\|b\|_{C_{\frac{3+\gamma}4}^{\frac{1+\gamma}4}((0,T];\mathbb{R})}.
\]
This completes the proof. 
\end{proof}

\begin{corollary}\label{cor:differ-linear-est}
Let $b^{(j)}\in C_{\frac{3+\gamma}4}^{\frac{1+\gamma}4}((0,T];\mathbb{R})\,(j=1,2)$ with $\gamma\in(0,1)$ and 
let $U^{(j)}$ be the solution to the linear problem \eqref{LP} with $b^{(j)}$ instead of $b$. 
Then there exists $C>0$ such that 
\begin{align*}
&\|U^{(1)}-U^{(2)}\|_{B_{\frac{2+\gamma}4}((0,T];BUC^{3+\gamma}(\mathbb{R}_+))}
+\|U^{(1)}-U^{(2)}\|_{C_{\frac{2+\gamma}4}^{\frac{1+\gamma}4}((0,T];BUC^2(\mathbb{R}_+))} \\
&\le C\|b^{(1)}-b^{(2)}\|_{C_{\frac{3+\gamma}4}^{\frac{1+\gamma}4}((0,T];\mathbb{R})}\tan\beta.
\end{align*}
\end{corollary}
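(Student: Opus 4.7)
The approach is to exploit the linearity of problem \eqref{LP} in $b$. The representation \eqref{U_LP} shows that $U(x,t)=U_1(x,t)\tan\beta+c_\beta U_2(x,t)$, where only $U_2(x,t)=\int_0^tK(x,0,t-\tau)b(\tau)\,d\tau$ depends on $b$, and it does so linearly. Therefore, setting $\tilde b:=b^{(1)}-b^{(2)}$ and $\tilde U:=U^{(1)}-U^{(2)}$, the $U_1$ contribution cancels and $\tilde U=c_\beta U_2^{\tilde b}$, where $U_2^{\tilde b}$ denotes \eqref{int_Kb} with $b$ replaced by $\tilde b$.

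First I would make this cancellation explicit and reduce the corollary to the claim
\[
\|U_2^{\tilde b}\|_{B_{\frac{2+\gamma}4}((0,T];BUC^{3+\gamma}(\mathbb{R}_+))}
+\|U_2^{\tilde b}\|_{C_{\frac{2+\gamma}4}^{\frac{1+\gamma}4}((0,T];BUC^2(\mathbb{R}_+))}
\lesssim \|\tilde b\|_{C_{\frac{3+\gamma}4}^{\frac{1+\gamma}4}((0,T];\mathbb{R})}.
\]
Next, I would observe that every single estimate in the proof of Lemma \ref{lem:linear-est} that contributed to the $C_2\|b\|_{C_{(3+\gamma)/4}^{(1+\gamma)/4}}$ term in the final bound was derived for $U_2$ alone: in Step 1, these are \eqref{bound_U2}, \eqref{bound_lthD_U2}, the two-term decomposition of $\|\partial_x^3U_2(t)\|_\infty$, and the $I_1,I_2,I_3$ splitting of $[\partial_x^3U_2(t)]_\gamma$; in Step 2 these are the full $J_1,J_2,J_3$ analysis of time differences of $\partial_x^kU_2$ for $k=0,1,2$. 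Each of these computations is linear in $b$, since $U_2$ depends linearly on $b$ and the triangle inequality is used inside the integrals without involving $U_1$.

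Consequently, specializing every such estimate to $\tilde b$ in place of $b$ yields the displayed inequality above with constant independent of $\tan\beta$. Multiplying through by the prefactor $c_\beta$, which by \eqref{c_beta} satisfies $c_\beta<3\tan\beta$, gives
\[
\|\tilde U\|_{B_{\frac{2+\gamma}4}((0,T];BUC^{3+\gamma}(\mathbb{R}_+))}
+\|\tilde U\|_{C_{\frac{2+\gamma}4}^{\frac{1+\gamma}4}((0,T];BUC^2(\mathbb{R}_+))}
\le C\|\tilde b\|_{C_{\frac{3+\gamma}4}^{\frac{1+\gamma}4}((0,T];\mathbb{R})}\tan\beta,
\]
which is the claim. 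There is no substantive obstacle: the only point to verify is that the proof of Lemma \ref{lem:linear-est} treats $U_1$ and $U_2$ in strictly separated steps so that taking the difference does not mix them, and this is immediate from how the estimates are organized. In effect the corollary is essentially a restatement of what has already been proved, with the linearity in $b$ made explicit.
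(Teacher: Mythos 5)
Your proposal is correct and follows exactly the paper's own argument: the paper likewise notes that the $U_1$ term cancels because it is independent of $b$, writes $U^{(1)}-U^{(2)}=c_\beta\int_0^tK(x,0,t-\tau)\{b^{(1)}(\tau)-b^{(2)}(\tau)\}\,d\tau$, and invokes the $U_2$-estimates from Lemma \ref{lem:linear-est} applied to $b^{(1)}-b^{(2)}$, with the factor $\tan\beta$ coming from $c_\beta<3\tan\beta$.
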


\begin{proof}
From the linearity and the fact that $U_1$ does not depend on $b^{(j)}$, the statement follows. 
\end{proof}

\section{Existence of a self-similar solution to the Mullins problem}\label{sec:MullinsProblem}
\subsection{The estimates of the Green function}\label{subsec:Grenn-func}
Recalling the first equality of \eqref{GreenFunc} and applying the change of variable $\xi=\eta/t^{\frac14}$, 
the derivatives of the Green function $G(x,t)$ are represented as 
\begin{align}
\partial_t^\ell\partial_x^kG(x,t)
=&\,\frac{(-1)^\ell\,\textbf{i}^k}{2\pi}\int_{\mathbb{R}}\xi^{4\ell+k}e^{-\xi^4t+\textbf{i}x\xi}\,d\xi \notag \\
=&\,\frac{(-1)^\ell\,\textbf{i}^k}{2\pi t^{\frac{4\ell+k+1}4}}\int_{\mathbb{R}}\eta^{4\ell+k}e^{-\eta^4+\textbf{i}(x/t^{\frac14})\eta}\,d\eta \label{DGreenFunc-1}
\end{align}
for $\ell,k\in\mathbb{N}\cup\{0\}$. 

\begin{remark}\label{rem:even-odd}
Since $\partial_x^k\cos(x\xi)=\xi^k\cos(x\xi+(k\pi)/2)$ for $k\in\mathbb{N}\cup\{0\}$, the second equality of \eqref{GreenFunc} implies that 
\begin{align}
\partial_t^\ell\partial_x^kG(x,t)
=&\,\frac{(-1)^\ell}{\pi}\int_0^\infty\xi^{4\ell+k}e^{-\xi^4t}\cos\Bigl(x\xi+\frac{k\pi}2\Bigr)\,d\xi \notag \\
=&\,\left\{\begin{array}{ll} 
\displaystyle\dfrac{(-1)^{\ell+m}}{\pi}\int_0^\infty\xi^{4\ell+k}e^{-\xi^4t}\cos(x\xi)\,d\xi&(k=2m) \\[0.4cm]
\displaystyle\dfrac{(-1)^{\ell+m+1}}{\pi}\int_0^\infty\xi^{4\ell+k}e^{-\xi^4t}\sin(x\xi)\,d\xi&(k=2m+1)
\end{array}\right. \label{DGreenFunc-2}
\end{align}
for $m\in\mathbb{N}\cup\{0\}$. From \eqref{DGreenFunc-2} we see that $\partial_t^\ell\partial_x^kG(x,t)$ is an even function 
for $k=2m$ and an odd function for $k=2m+1$ with respect to $x$. 
\end{remark}

\begin{lemma}\label{lem:int_DG}
Assume that $F\in BUC(\mathbb{R})$. 
Then there exists $C>0$ such that for $\ell,k\in\mathbb{N}\cup\{0\}$
\[
\int_{\mathbb{R}}|\partial_t^\ell\partial_x^kG(x-y,t)||F(y)|\,dy\le Ct^{-\frac{4\ell+k}4}\|F\|_\infty.
\]
\end{lemma}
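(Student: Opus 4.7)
The plan is to reduce the estimate to a scale-invariant $L^1$ bound on the self-similar profile of $\partial_t^\ell\partial_x^k G$. Starting from \eqref{DGreenFunc-1} and substituting $\eta=t^{1/4}\xi$ yields
\[
\partial_t^\ell\partial_x^kG(x,t)=\frac{1}{t^{(4\ell+k+1)/4}}\,H_{\ell,k}\!\left(\frac{x}{t^{1/4}}\right),\qquad
H_{\ell,k}(z):=\frac{(-1)^\ell\textbf{i}^k}{2\pi}\int_{\mathbb{R}}\eta^{4\ell+k}e^{-\eta^4+\textbf{i}z\eta}\,d\eta.
\]
First I would pull out $|F(y)|\le\|F\|_\infty$ and then change variables by $z=(x-y)/t^{1/4}$, $dy=t^{1/4}\,dz$, which absorbs one factor $t^{1/4}$ and leaves
\[
\int_{\mathbb{R}}|\partial_t^\ell\partial_x^kG(x-y,t)|\,|F(y)|\,dy\le\|F\|_\infty\,t^{-(4\ell+k)/4}\int_{\mathbb{R}}|H_{\ell,k}(z)|\,dz.
\]
The whole problem thus reduces to showing $H_{\ell,k}\in L^1(\mathbb{R})$, after which $C:=\|H_{\ell,k}\|_{L^1}$ serves as the constant in the lemma.

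The main obstacle is therefore verifying the $L^1$-integrability of $H_{\ell,k}$. I would recognise it as a derivative of the inverse Fourier transform of a Schwartz function: if $g(z):=(2\pi)^{-1}\int_{\mathbb{R}}e^{-\eta^4+\textbf{i}z\eta}\,d\eta$, then differentiating under the integral and using $\textbf{i}^{4\ell+k}=\textbf{i}^k$ (since $\textbf{i}^4=1$) gives
\[
H_{\ell,k}(z)=(-1)^\ell\partial_z^{4\ell+k}g(z).
\]
Since $\eta\mapsto e^{-\eta^4}$ belongs to the Schwartz class $\mathcal{S}(\mathbb{R})$, so does $g$, and hence every derivative of $g$; in particular $H_{\ell,k}\in\mathcal{S}(\mathbb{R})\subset L^1(\mathbb{R})$. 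If one prefers a self-contained argument, the same conclusion follows from the pointwise estimate $|H_{\ell,k}(z)|\le C_N(1+|z|)^{-N}$ for arbitrary $N$: boundedness on compact sets of $z$ is trivial from $\eta^{4\ell+k}e^{-\eta^4}\in L^1_\eta$, and decay for large $|z|$ is produced by repeated integration by parts in $\eta$, each step supplying a factor $(\textbf{i}z)^{-1}$ against a derivative of the Schwartz weight $\eta^{4\ell+k}e^{-\eta^4}$, which is still absolutely integrable.
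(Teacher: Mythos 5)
Your proof is correct. The scaling reduction is the same as the paper's: both arguments pull out $\|F\|_\infty$, change variables, and reduce the lemma to the finiteness of $\int_{\mathbb{R}}|H_{\ell,k}(z)|\,dz$ for the self-similar profile $H_{\ell,k}$, with the leftover Jacobian factor $t^{1/4}$ converting $t^{-(4\ell+k+1)/4}$ into the claimed $t^{-(4\ell+k)/4}$. Where you diverge is in how the integrability of the profile is established. The paper (following the argument of Cui cited in the proof, and spelled out in a commented-out block of the source) shifts the contour $\eta\mapsto\eta+\textbf{i}\sigma$ with $\sigma\sim r^{1/3}$ and obtains the quantitative stretched-exponential bound $|H_{\ell,k}(z)|\lesssim(1+|z|^{(4\ell+k)/3})e^{-\nu_\ast|z|^{4/3}}$, which is then integrated explicitly via a Gamma-function substitution. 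You instead observe that $H_{\ell,k}=(-1)^\ell\partial_z^{4\ell+k}g$ with $g$ the inverse Fourier transform of the Schwartz function $e^{-\eta^4}$ (the identity $\textbf{i}^{4\ell+k}=\textbf{i}^k$ is checked correctly), so $H_{\ell,k}\in\mathcal{S}(\mathbb{R})\subset L^1(\mathbb{R})$; your fallback via repeated integration by parts in $\eta$ gives the same polynomial rapid decay. Your route is softer and shorter for this particular lemma; the paper's contour-shift method has the advantage of producing the explicit near-Gaussian decay rate that it reuses in Lemma \ref{lem:int_DG_Holder}, where the H\"older difference of kernels must be estimated and the bare Schwartz argument would not directly supply the needed factor $(x_1-x_2)^\gamma t^{-\gamma/4}$. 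Either way, the constant $C$ depends on $\ell$ and $k$, which is consistent with how the lemma is used.
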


\begin{proof}
Since $|\partial_t^\ell\partial_z^kG(z,t)|$ is an even function with respect to $z$ by Remark \ref{rem:even-odd}, we have
\begin{align*}
&\hspace*{-8pt}
\int_{\mathbb{R}}|\partial_t^\ell\partial_x^kG(x-y,t)||F(y)|\,dy \\
=&\,\int_{\mathbb{R}}|\partial_t^\ell\partial_z^kG(z,t)||F(x-z)|\,dz \\
=&\,\int_0^\infty|\partial_t^\ell\partial_z^kG(z,t)|\{|F(x-z)|+|F(x+z)|\}\,dz \notag \\
\le&\,2\int_0^\infty|\partial_t^\ell\partial_z^kG(z,t)|\,dz\,\|F\|_\infty. \notag
\end{align*}
By virtue of Lemma \ref{lem:Cui} and the fact that 
\begin{align*}
\int_0^{\infty}\Bigl(\frac{z}{t^{\frac14}}\Bigr)^{\frac{j}3}e^{-\nu(z/t^{\frac14})^{\frac43}}\,dz
=\frac34\Bigl(\frac1{\nu}\Bigr)^{\frac{j+3}4}\Gamma\Bigl(\frac{j+3}4\Bigr)t^{\frac14} \quad (j=0,4\ell+k) 
\end{align*}
for a constant $\nu>0$, we arrive at the desired result. 
\end{proof}

\subsection{Existence of a bounded self-similar solution}\label{subsec:mild-sol}
In this subsection, we solve the initial boundary value problem \eqref{SSP} to obtain a bounded self-similar solution 
to the Mullins problem. 

First, we derive the integral equation for $w$ corresponding to the problem \eqref{SSP}. 
Let $w(x,t)$ satisfy \eqref{SSP}. We decompose $w(x,t)$ into $U^w(x,t)$ and $u^w(x,t)$, i.e., $w(x,t)=u^w(x,t)+U^w(x,t)$, 
where $u^w$ satisfies 
\begin{equation}\label{IBVP_u}
\left\{\begin{array}{l}
u_t=-u_{xxxx}-\partial_x\Xi(w_x,w_{xx},w_{xxx}),\,\,\ x>0,\ t>0, \\
u_x(0,t)=u_{xxx}(0,t)=0,\,\,\ t>0,\\
u(x,0)=0,\,\,\ x\in\mathbb{R}_+.
\end{array}\right.
\end{equation}
and $U^w$ fulfills 
\begin{equation*}
\left\{\begin{array}{l}
U_t=-U_{xxxx},\,\,\ x>0,\ t>0, \\[0.05cm]
U_x(0,t)=\tan \beta ,\,\,\ t>0, \\[0.05cm]
U_{xxx}(0,t)=c_{\beta}(w_{xx}(0,t))^2, \,\,\ t>0, \\[0.05cm]
U(x,0)=0,\,\,\ x\in\mathbb{R}_+. 
\end{array}\right.
\end{equation*}
Note that $U^w$ is represented as \eqref{U_LP}, where $U_1$ and $U_2$ are given by \eqref{int_2ndD_K} and \eqref{int_Kb} 
with $b(t)=c_{\beta}(w_{xx}(0,t))^2$, respectively. Let us derive the expression for $u^w$. Define an odd extension operator 
$\mathcal{P}:BUC^{\gamma}(\mathbb{R}_+)\to BUC^{\gamma}_{\text{odd}}(\mathbb{R})$ as
\[
(\mathcal{P}\varphi)(x):=\left\{\begin{array}{ll} \varphi (x)-\varphi(0)&\text{if $x\ge 0$}, \\ 
-(\varphi(-x)-\varphi(0))&\text{if $x<0$}, \end{array}\right.
\]
where $BUC^{\gamma}_{\text{odd}}(\mathbb{R}):=\{\varphi\in BUC^{\gamma}(\mathbb{R})\,|\,\varphi(-x)=-\varphi(x)\ (x\in\mathbb{R})\}$. 
Note that if $\varphi\in BUC^{\gamma}(\mathbb{R}_+)$, 
\begin{equation}\label{Pf-Holder}
[\mathcal{P}\varphi]_\gamma\le C[\varphi]_\gamma
\end{equation}
holds for a constant $C>0$. Set 
\[
f^w(x,t):=\Xi(w_x,w_{xx},w_{xxx})(x,t),
\]
where the explicit form of $\Xi$ is given by \eqref{def_Xi}. 
Let $\widetilde{u}^w(x,t)$ be the solution to the Cauchy problem 
\begin{equation}\label{Cauchy}
\left\{\begin{array}{l}
\widetilde{u}_t=-\widetilde{u}_{xxxx}-\partial_x(\mathcal{P}f^w)(x,t),\,\,\ x\in\mathbb{R},\ t>0, \\[0.05cm]
\widetilde{u}(x,0)=0,\,\,\ x\in\mathbb{R}. 
\end{array}\right.
\end{equation}
Setting $\widetilde{u}^w_-(x,t):=\widetilde{u}^w(-x,t)$, $\widetilde{u}^w_-$ also satisfies \eqref{Cauchy} since $\partial_x(\mathcal{P}f^w)(x,t)$ 
is an even function. From the uniqueness of a solution to the Cauchy problem \eqref{Cauchy}, we see that 
$\widetilde{u}^w_-(x,t)=\widetilde{u}^w(x,t)$, i.e., $\widetilde{u}^w(-x,t)=\widetilde{u}^w(x,t)$ for $x\in\mathbb{R},t>0$. Thus we find that 
$\widetilde{u}^w$ is an even function, so that $\widetilde{u}^w_x(0,t)=\widetilde{u}^w_{xxx}(0,t)=0$ holds for $t>0$ since $\widetilde{u}^w_x$ 
and $\widetilde{u}^w_{xxx}$ are odd functions. This implies that $\widetilde{u}^w(x,t)|_{x\ge0}=u^w(x,t)$, where $u^w$ is the solution to 
\eqref{IBVP_u}. Under this observation, using $G(x,t)$ given by \eqref{GreenFunc}, $u^w$ is represented as 
\[
u^w(x,t)=\widetilde{u}^w(x,t)|_{x\ge0}
=-\int_0^t(G(\,\cdot\,,t-\tau)\ast\partial_x(\mathcal{P}f^w)(\,\cdot\,,\tau))(x)\,d\tau.
\]
The integrand $(G(\,\cdot\,,t-\tau)\ast\partial_x(\mathcal{P}f^w)(\,\cdot\,,\tau))(x)$ is transformed into 
\begin{align*}
(G(\,\cdot\,,t-\tau)\ast\partial_x(\mathcal{P}f^w)(\,\cdot\,,\tau))(x)
=&\,\int_{\mathbb{R}}G(x-y,t-\tau)\partial_y(\mathcal{P}f^w)(y,\tau)\,dy \\
=&\,-\int_{\mathbb{R}}\partial_yG(x-y,t-\tau)(\mathcal{P}f^w)(y,\tau)\,dy \\
=&\,\int_{\mathbb{R}}\partial_xG(x-y,t-\tau)(\mathcal{P}f^w)(y,\tau)\,dy. 
\end{align*}
As a result, we have
\[
u^w(x,t)=-\int_0^t\int_{\mathbb{R}}\partial_xG(x-y,t-\tau)(\mathcal{P}f^w)(y,\tau)\,dy\,d\tau.
\]
Recalling $w(x,t)=u^w(x,t)+U^w(x,t)$, we arrive at the integral equation for $w$ as follows:
\begin{equation}\label{SSP_int-eq}
w(x,t)=-\int_0^t\int_{\mathbb{R}}\partial_xG(x-y,t-\tau)(\mathcal{P}f^w)(y,\tau)\,dy\,d\tau+U^w(x,t)
\end{equation}
This leads us to the main theorem. 

\begin{theorem}\label{thm:existence-selfsimilar}
Let $\gamma\in(0,1)$ and $M>0$. Then there exist $\beta_\ast>0$ such that 
if $\beta\in(0,\beta_\ast)$, then there exists a unique 
\[
w\in B_{\frac{2+\gamma}4}((0,T];BUC^{3+\gamma}(\mathbb{R}_+))\cap\,C_{\frac{2+\gamma}4}^{\frac{1+\gamma}4}((0,T];BUC^2(\mathbb{R}_+)),
\]
which solves \eqref{SSP_int-eq} and satisfies 
\begin{align*}
&\|w_x-U^w_x\|_{B([0,T],BUC(\mathbb{R}_+))}\le\tan\beta, \\
&\|w\|_{B_{\frac{2+\gamma}4}((0,T];BUC^{3+\gamma}(\mathbb{R}_+))}
+\|w\|_{C_{\frac{2+\gamma}4}^{\frac{1+\gamma}4}((0,T];BUC^2(\mathbb{R}_+))}\le M.
\end{align*}
\end{theorem}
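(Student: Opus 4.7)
The plan is to apply Banach's fixed-point theorem to the map
\begin{equation*}
\Phi(w)(x,t):=U^w(x,t)-\int_0^t\!\int_{\mathbb{R}}\partial_xG(x-y,t-\tau)(\mathcal{P}f^w)(y,\tau)\,dy\,d\tau
\end{equation*}
on the complete metric space
\begin{equation*}
\Sigma_{M,\delta}:=\{w\in X\mid\|w\|_X\le M,\ \|w_x-U^w_x\|_{B([0,T];BUC(\mathbb{R}_+))}\le\delta\},
\end{equation*}
where $X:=B_{\frac{2+\gamma}4}((0,T];BUC^{3+\gamma}(\mathbb{R}_+))\cap C_{\frac{2+\gamma}4}^{\frac{1+\gamma}4}((0,T];BUC^2(\mathbb{R}_+))$ with the norm given by the sum of its two components, and $U^w$ is the solution of \eqref{LPw}, i.e.\ with $b(t)$ replaced by $b^w(t):=(w_{xx}(0,t))^2$. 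Checking that $w=\Phi(w)$ is a mild solution of \eqref{IBVP_u} has already been carried out in the derivation just above the theorem; once a fixed point is produced, setting $w=u+U^w$ recovers the original boundary conditions automatically, since the oddness of $\mathcal{P}f^w$ combined with the parities in Remark~\ref{rem:even-odd} forces $u_x(0,t)=u_{xxx}(0,t)=0$.

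The first step is to control $U^w$. Since $w\in X$ gives $w_{xx}(0,\cdot)\in C_{\frac{2+\gamma}4}^{\frac{1+\gamma}4}((0,T];\mathbb{R})$ with norm $\lesssim M$, Remark~\ref{rem:reg_wxx2} yields $b^w\in C_{\frac{3+\gamma}4}^{\frac{1+\gamma}4}((0,T];\mathbb{R})$ with norm $\lesssim M^2$, so Lemma~\ref{lem:linear-est} produces $\|U^w\|_X\lesssim(1+M^2)\tan\beta$. The second step is to estimate the nonlinear integral using Lemmas~\ref{lem:int_DG} and \ref{lem:int_DG_Holder}. The crucial point is that
\begin{equation*}
\Xi(w_x,w_{xx},w_{xxx})=\Bigl(\tfrac{1}{(1+w_x^2)^2}-1\Bigr)w_{xxx}-\tfrac{3w_x(w_{xx})^2}{(1+w_x^2)^3}
\end{equation*}
is at least quadratic in $w_x$, and on $\Sigma_{M,\delta}$ the uniform bound $\|w_x\|_{B([0,T];BUC)}\le\|U^w_x\|_\infty+\delta\lesssim\tan\beta+\delta$ holds (by the $k=1$ piece of Lemma~\ref{lem:linear-est}), so every occurrence of $w_x$ or $1-(1+w_x^2)^{-2}$ contributes an $O(\tan\beta+\delta)$ factor, while the remaining derivative factors are absorbed by the time-weighted bounds $\|\partial_x^kw(\tau)\|_\infty\lesssim MT^{(3-k+\gamma)/4}\tau^{-(2+\gamma)/4}$ built into $\|w\|_X\le M$. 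The extension $\mathcal{P}$ contributes only constants via \eqref{Pf-Holder}. These ingredients produce an estimate of the schematic form
\begin{equation*}
\Bigl\|\int_0^t\!\int_{\mathbb{R}}\partial_xG(\cdot-y,t-\tau)(\mathcal{P}f^w)(y,\tau)\,dy\,d\tau\Bigr\|_X\lesssim(\tan\beta+\delta)(M+M^2),
\end{equation*}
with the detailed verification of each component of the $X$-norm deferred to the appendix.

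For the self-mapping property, the two bounds above give $\|\Phi(w)\|_X\le C_1(1+M^2)\tan\beta+C_2(\tan\beta+\delta)(M+M^2)$, and the difference $\Phi(w)_x-U^{\Phi(w)}_x=-\int\partial_x^2G\,\mathcal{P}f^w+(U^w_x-U^{\Phi(w)}_x)$ is controlled by Lemma~\ref{lem:int_DG} with $k=2$ together with Corollary~\ref{cor:differ-linear-est} applied to $b^w-b^{\Phi(w)}$, which is linearly bounded in $\|w-\Phi(w)\|_X$ thanks to Remark~\ref{rem:reg_wxx2}. Fixing $M$ and then choosing $\beta_\ast,\delta_\ast$ sufficiently small makes these bounds $\le M$ and $\le\delta$, so $\Phi(\Sigma_{M,\delta})\subset\Sigma_{M,\delta}$. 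For the contraction, I would split
\begin{equation*}
\Phi(w^{(1)})-\Phi(w^{(2)})=(U^{w^{(1)}}-U^{w^{(2)}})-\int_0^t\!\int_{\mathbb{R}}\partial_xG(\cdot-y,t-\tau)\,\mathcal{P}(f^{w^{(1)}}-f^{w^{(2)}})(y,\tau)\,dy\,d\tau,
\end{equation*}
estimating the first term by Corollary~\ref{cor:differ-linear-est} and the second by a Lipschitz bound on $\Xi$ whose small multiplier is again $O(\tan\beta+\delta)$; Banach's theorem then yields the unique fixed point in $\Sigma_{M,\delta}$.

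The main obstacle is the nonlinear estimate in the time-Hölder component, that is, bounding $\sup_{s<t}s^{\frac{2+\gamma}4}\|\partial_x^k(\Phi(w)-U^w)(t)-\partial_x^k(\Phi(w)-U^w)(s)\|_\infty/(t-s)^{\frac{1+\gamma}4}$ for $k=0,1,2$. Since the integrand contains $w_{xxx}$, for which $X$ carries no time-regularity, one is forced to split the differenced integral into three pieces in the spirit of the $J_1,J_2,J_3$ decomposition in Step~2 of the proof of Lemma~\ref{lem:linear-est}, and then to balance the singular time factors delivered by Lemmas~\ref{lem:int_DG} and \ref{lem:int_DG_Holder} against the time-weighted spatial bounds on $w$ so that the precise factor $s^{-(2+\gamma)/4}(t-s)^{(1+\gamma)/4}T^{(2-k)/4}$ required by the norm, multiplied by a smallness factor $\tan\beta+\delta$, emerges in each piece.
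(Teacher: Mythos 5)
Your proposal is correct and follows essentially the same route as the paper: the authors also run Banach's fixed-point theorem for the same map on the same set $\mathcal{Z}_{\delta,M}$, invoking Lemma \ref{lem:linear-est}, Corollary \ref{cor:differ-linear-est}, Lemmas \ref{lem:v-ineq-1}--\ref{lem:v-ineq-2}, and the appendix estimates on $f^w$, with the smallness coming from $\|w_x\|_\infty\le\delta+C_M\tan\beta$. The only point to sharpen is your schematic bound $(\tan\beta+\delta)(M+M^2)$ for the nonlinear integral: to close the constraint $\|(\mathcal{F}w)_x-U^w_x\|\le\delta$ one must keep the full quadratic smallness $C_{M}(\delta^2+\tan^2\beta)$ (which your observation that $\Xi$ is quadratic in $w_x$ already delivers), since a bound linear in $\delta$ with an $M$-dependent constant would not be absorbable for arbitrary $M$.
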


\noindent
Note that this theorem implies the existence and the uniqueness of a bounded self-similar solution 
in the sense of the mild solution. 

\bigskip\noindent
For the proof of this theorem, we prepare for several lemmas below. In order to state the first lemma, 
let $\rho\in C_0^{\infty}(\mathbb{R})$ be an even function and satisfy 
\[
\text{supp}\,\rho\subset(-1,1), \quad 0\le\rho\le1, \quad \int_{\mathbb{R}}\rho(x)\,dx=1.
\]
For $\rho_\tau(x)=\tau^{-\frac14}\rho(x/\tau^{\frac14})$, we define $F_\tau(x)$ as 
\[
F_\tau(x):=\int_{\mathbb{R}}F(x-y)\rho_\tau(y)\,dy=\int_{\mathbb{R}}F(y)\rho_\tau(x-y)\,dy.
\]

\begin{lemma}\label{lem:tau-ineq}
Let $\gamma\in(0,1)$ and $\tau>0$. Assume that $F\in BUC^{\gamma}(\mathbb{R})$. Then the inequalities 
\[
\|F_\tau-F\|_\infty\le c_{0,\gamma}[F]_{\gamma}\,\tau^{\frac{\gamma}4}, \quad
\|\partial_xF_\tau\|_\infty\le c_{1,\gamma}[F]_{\gamma}\,\tau^{\frac{\gamma-1}4}
\]
hold, where 
\[
c_{k,\gamma}=\int_{\mathbb{R}}|z|^{\gamma}|\rho^{(k)}(z)|\,dz \quad (k=0,1).
\]
\end{lemma}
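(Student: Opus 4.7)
The plan is to prove both inequalities by the standard mollifier trick, exploiting that $\rho_\tau$ has total integral one and that $\rho_\tau'$ has total integral zero, so in each case one can subtract $F(x)$ from the integrand and bring in the H\"older seminorm.

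For the first inequality, since $\int_{\mathbb{R}}\rho_\tau(y)\,dy=1$, I would write
\[
F_\tau(x)-F(x)=\int_{\mathbb{R}}\{F(x-y)-F(x)\}\rho_\tau(y)\,dy,
\]
and bound the integrand pointwise by $[F]_\gamma |y|^\gamma \rho_\tau(y)$. The substitution $y=\tau^{1/4}z$ turns $\int |y|^\gamma\rho_\tau(y)\,dy$ into $\tau^{\gamma/4}\int|z|^\gamma\rho(z)\,dz=c_{0,\gamma}\tau^{\gamma/4}$, giving the claim.

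For the second inequality, differentiation under the integral sign in the form $F_\tau(x)=\int F(y)\rho_\tau(x-y)\,dy$ yields $\partial_x F_\tau(x)=\int F(y)\rho_\tau'(x-y)\,dy$. The crucial observation is that $\int \rho_\tau'(z)\,dz=0$ (because $\rho$ is compactly supported), so I may subtract $F(x)\int\rho_\tau'(x-y)\,dy=0$ and obtain
\[
\partial_x F_\tau(x)=\int_{\mathbb{R}}\{F(y)-F(x)\}\rho_\tau'(x-y)\,dy=\int_{\mathbb{R}}\{F(x-z)-F(x)\}\rho_\tau'(z)\,dz
\]
after the change $z=x-y$. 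Estimating by $[F]_\gamma|z|^\gamma|\rho_\tau'(z)|$ and rescaling with $z=\tau^{1/4}w$ (using $\rho_\tau'(z)=\tau^{-1/2}\rho'(z/\tau^{1/4})$) yields $\int|z|^\gamma|\rho_\tau'(z)|\,dz=c_{1,\gamma}\tau^{(\gamma-1)/4}$, which gives the second bound.

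There is essentially no obstacle here: the only mild technical point is making sure to invoke $\int\rho_\tau'=0$ before estimating, since without this cancellation one cannot absorb the non-differentiability of $F$ into the H\"older seminorm (a naive bound $|\partial_xF_\tau(x)|\le\|F\|_\infty\int|\rho_\tau'|$ would give a scaling $\tau^{-1/4}$ rather than the desired $\tau^{(\gamma-1)/4}$). Once this cancellation is exploited, the proof reduces to two elementary substitutions.
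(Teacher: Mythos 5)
Your proof is correct and follows exactly the same route as the paper's: subtract $F(x)$ using $\int\rho_\tau=1$ for the first bound and $\int\rho_\tau'=0$ for the second, estimate with the H\"older seminorm, and rescale. No issues.
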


\begin{proof}
From the definition of $F_\tau(x)$, it follows that for $x\in\mathbb{R}$  
\begin{align*}
|F_\tau(x)-F(x)|
\le&\,\int_{\mathbb{R}}|F(x-y)-F(x)|\rho_\tau(y)\,dy
\le[F]_{\gamma}\int_{\mathbb{R}}|y|^\gamma\rho_\tau(y)\,dy \\
=&\,[F]_{\gamma}\,\int_{\mathbb{R}}|\tau^{\frac14}z|^\gamma\rho(z)\,dz
=c_{0,\gamma}[F]_{\gamma}\,\tau^{\frac{\gamma}4}.
\end{align*}
This implies the first inequality. Furthermore, using the second equality in the definition of $F_\tau(x)$, 
we see that 
\begin{align*}
\partial_xF_\tau(x)
=&\,\int_{\mathbb{R}}F(y)\partial_x\rho_\tau(x-y)\,dy 
=\tau^{-\frac12}\int_{\mathbb{R}}F(y)\rho'((x-y)/\tau^{\frac14})\,dy \\
=&\,\tau^{-\frac12}\int_{\mathbb{R}}F(x-z)\rho'(z/\tau^{\frac14})\,dz.
\end{align*}
Taking account of 
\[
\int_{\mathbb{R}}\rho'(z/\tau^{\frac14})\,dz=0,
\]
we have
\[
\partial_xF_\tau(x)
=\tau^{-\frac12}\int_{\mathbb{R}}\{F(x-z)-F(x)\}\rho'(z/\tau^{\frac14})\,dz.
\]
This yield that 
\begin{align*}
|\partial_xF_\tau(x)|
\le&\,\tau^{-\frac12}\int_{\mathbb{R}}|F(x-z)-F(x)||\rho'(z/\tau^{\frac14})|\,dz \\
\le&\,[F]_{\gamma}\,\tau^{-\frac12}\int_{\mathbb{R}}|z|^{\gamma}|\rho'(z/\tau^{\frac14})|\,dz \\
=&\,[F]_{\gamma}\,\tau^{-\frac14}\int_{\mathbb{R}}|\tau^{\frac14}\xi|^{\gamma}|\rho'(\xi)|\,d\xi
=c_{1,\gamma}[F]_{\gamma}\,\tau^{\frac{\gamma-1}4},
\end{align*}
so that we obtain the second inequality. 
\end{proof}

\begin{remark}\label{rem:tau-ineq}
If $F\in W^{1,\infty}(\mathbb{R})$, Lemma \ref{lem:tau-ineq} also holds for $\gamma=1$. 
\end{remark}

Here, we introduce a class of intermediate spaces between $BUC(\mathbb{R})$ and $BUC^4(\mathbb{R})$. 
For $\alpha\in(0,1)$, define the space $D_G(\alpha,\infty)$ as 
\begin{align*}
&D_G(\alpha,\infty)
:=\{f\in BUC(\mathbb{R})\,|\, 
\psi(r):=r^{1-\alpha}\|\partial_x^4G(\,\cdot\,,r)\ast f\|_{BUC(\mathbb{R})}\in B(0,1)\}, \\
&\|f\|_{D_G(\alpha,\infty)}=\|f\|_\infty+\|\psi\|_{B(0,1)}.
\end{align*}
Referring to \cite[Proposition 2.2.2, Theorem 1.2.17]{lun;95;book} or \cite[Proposition 6.2, Example 5.15]{lun;18;book}, 
the space $D_G(\alpha,\infty)$ is characterized by
\begin{equation}\label{interpolation}
D_G(\alpha,\infty)=(BUC(\mathbb{R}),BUC^4(\mathbb{R}))_{\alpha,\infty}=BUC^{4\alpha}(\mathbb{R})
\end{equation}
provided that $4\alpha$ is not an integer, where $(X,Y)_{\alpha,\infty}$ is a real interpolation space between 
the Banach spaces $X$ and $Y$. 

Set 
\begin{align*}
v(x,t,\tau)
:=&\,\bigl(\partial_xG(\,\cdot\,,t-\tau)\ast(\mathcal{P}f^w)(\,\cdot\,,\tau)\bigr)(x) \\
\,=&\,\int_{\mathbb{R}}\partial_xG(x-y,t-\tau)(\mathcal{P}f^w)(y,\tau)\,dy.
\end{align*}
For $v(x,t,\tau)$, we establish the following lemmas. 

\begin{lemma}\label{lem:v-ineq-1}
There exist $C_1,C_2>0$ such that for $k\in\{0,1,2,3\}$
\begin{align}
&\frac1{T^{\frac{3-k+\gamma}4}}\sup_{t\in(0,T]}t^{\frac{2+\gamma}4}\int_0^t\|\partial_x^kv(\,\cdot\,,t,\tau)\|_\infty\,d\tau
\le C_1\|f^w\|_{B_{\frac{2+\gamma}4}((0,T];BUC^{\gamma}(\mathbb{R}_+))}, \label{kth-v-ineq} \\
&\sup_{t\in(0,T]}t^{\frac{2+\gamma}4}\int_0^t[\partial_x^3v(\,\cdot\,,t,\tau)]_\gamma\,d\tau
\le C_2\|f^w\|_{B_{\frac{2+\gamma}4}((0,T];BUC^{\gamma}(\mathbb{R}_+))}. \label{3rd-v-Holder-ineq}
\end{align}
\end{lemma}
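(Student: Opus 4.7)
The plan is to substitute the decomposition \eqref{kthD-v} into each quantity that must be estimated, so that the kernel bounds of Lemma~\ref{lem:int_DG} and Lemma~\ref{lem:int_DG_Holder} can be paired with the mollifier estimates of Lemma~\ref{lem:tau-ineq}. Writing $F:=\mathcal{P}f^w$, inequality \eqref{Pf-Holder} provides $[F(\cdot,\tau)]_{\gamma}\le C[f^w(\cdot,\tau)]_{\gamma}$, while the definition of the weighted norm yields $[f^w(\cdot,\tau)]_{\gamma}\le \tau^{-(2+\gamma)/4}\,\|f^w\|_{B_{(2+\gamma)/4}((0,T];BUC^{\gamma}(\mathbb{R}_+))}$. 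These two facts transfer all regularity information on $f^w$ into estimates on $F$ that are straightforward to feed into the kernel bounds.

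For the $L^\infty$ estimate \eqref{kth-v-ineq}, pulling $\|F-F_{t-\tau}\|_\infty$ outside the first integral in \eqref{kthD-v} and invoking Lemma~\ref{lem:int_DG} with exponent $k+1$ bounds that piece by $C(t-\tau)^{-(k+1)/4}\|F-F_{t-\tau}\|_\infty$. Combining with the first bound in Lemma~\ref{lem:tau-ineq} yields $C(t-\tau)^{(\gamma-1-k)/4}[f^w(\cdot,\tau)]_{\gamma}$. Treating the second integral analogously, via Lemma~\ref{lem:int_DG} at exponent $k$ and the second bound in Lemma~\ref{lem:tau-ineq}, produces the same $(t-\tau)$-power. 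Since $\gamma-1-k>-4$ for $k\in\{0,1,2,3\}$, this exponent strictly exceeds $-1$, so integrating in $\tau$ against the weight $\tau^{-(2+\gamma)/4}$ via the Beta-function identity \eqref{relation-BF} evaluates to a constant multiple of $t^{(1-k)/4}$. Multiplying by $t^{(2+\gamma)/4}$ then reproduces precisely the factor $T^{(3-k+\gamma)/4}$ appearing on the left of \eqref{kth-v-ineq}, and taking the supremum in $t$ delivers the required $C_1$.

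For the H\"older-seminorm estimate \eqref{3rd-v-Holder-ineq}, the same decomposition \eqref{kthD-v} is used with $k=3$, but Lemma~\ref{lem:int_DG_Holder} now replaces Lemma~\ref{lem:int_DG}. The first integral admits the bound $C(t-\tau)^{-(4+\gamma)/4}\|F-F_{t-\tau}\|_\infty$ and the second the bound $C(t-\tau)^{-(3+\gamma)/4}\|\partial_y F_{t-\tau}\|_\infty$; in combination with Lemma~\ref{lem:tau-ineq}, each of these collapses to $C(t-\tau)^{-1}[f^w(\cdot,\tau)]_{\gamma}$. The hard part I anticipate is precisely this borderline factor $(t-\tau)^{-1}$, which is not integrable against the weight $\tau^{-(2+\gamma)/4}$ near $\tau=t$ and would at best give a logarithmic divergence if used directly. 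Closing the estimate will require a more refined argument that upgrades the time exponent strictly above $-1$: for instance, splitting into the regimes $|x_1-x_2|\ge(t-\tau)^{1/4}$ (where one inserts the $L^\infty$ bound already obtained and absorbs $(x_1-x_2)^{-\gamma}\le(t-\tau)^{-\gamma/4}$) and $|x_1-x_2|<(t-\tau)^{1/4}$ (where a mean-value step on the difference of kernels supplies an extra factor of $(t-\tau)^{1/4}$), or by exploiting the oddness of $\mathcal{P}f^w$ together with the parity of the Green-function derivatives recorded in Remark~\ref{rem:even-odd}. Once a strictly integrable time exponent is in hand, the same Beta-function computation as in the $L^\infty$ case yields the claimed bound with $C_2$ independent of $T$.
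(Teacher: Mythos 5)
Your treatment of \eqref{kth-v-ineq} is correct and coincides with the paper's argument: combining the decomposition \eqref{kthD-v} with Lemma \ref{lem:int_DG}, Lemma \ref{lem:tau-ineq} and \eqref{Pf-Holder} gives $|\partial_x^kv(x,t,\tau)|\lesssim(t-\tau)^{\frac{-k-1+\gamma}4}\tau^{-\frac{2+\gamma}4}\|f^w\|_{B_{\frac{2+\gamma}4}((0,T];BUC^{\gamma}(\mathbb{R}_+))}$, and the Beta-function identity \eqref{relation-BF} then produces exactly the factor $t^{-\frac{2+\gamma}4}T^{\frac{3-k+\gamma}4}$ you describe.

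For \eqref{3rd-v-Holder-ineq}, however, you have correctly located the obstruction --- the mollifier gain $(t-\tau)^{\gamma/4}$ exactly cancels the $(t-\tau)^{-\gamma/4}$ loss in the H\"older kernel bound, leaving the non-integrable power $(t-\tau)^{-1}$ --- but neither of your proposed remedies removes it, so this half of the proof does not close. In the regime $|x_1-x_2|\ge(t-\tau)^{1/4}$ you pay $(x_1-x_2)^{-\gamma}\le(t-\tau)^{-\gamma/4}$ against the already-obtained $L^\infty$ bound $(t-\tau)^{\frac{-4+\gamma}4}$, which returns precisely $(t-\tau)^{-1}$ again; in the complementary regime the spatial mean-value step costs one more $x$-derivative of $G$, i.e.\ $(t-\tau)^{-1/4}$, against the gain $|x_1-x_2|^{1-\gamma}\le(t-\tau)^{\frac{1-\gamma}4}$, and the balance is once more $(t-\tau)^{-1}$. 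No splitting in the spatial variable can help, because the borderline loss is genuinely in the time variable. The paper's device is the mean value theorem in time, \eqref{DG_mean-tau}: writing $\partial_x^jG(x-y,t-\tau)=\partial_x^jG(x-y,t)-\tau\,\partial_t\partial_x^jG(x-y,\omega_{\theta,t-\tau,t})$ with $\omega_{\theta,t-\tau,t}=t-(1-\theta)\tau\ge\theta t$ relocates the singularity from $t-\tau$ to $t$ (harmless, since $t$ is fixed in the $\tau$-integration) and introduces the explicit factor $\tau$, whose combination $\tau^{1-\frac{2+\gamma}4}=\tau^{\frac{2-\gamma}4}$ with the weight is bounded; one then applies Lemma \ref{lem:int_DG_Holder} with $(\ell,k)=(0,3),(1,3),(0,4),(1,4)$ and verifies that each of the four resulting $\tau$-integrals is $\lesssim t^{-\frac{2+\gamma}4}$. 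Without this step, or some other mechanism that genuinely gains a strictly positive power of $t-\tau$ (your appeal to the oddness of $\mathcal{P}f^w$ and the parity of the kernel does not supply one), the estimate \eqref{3rd-v-Holder-ineq} is not proved.
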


\begin{proof}
Let us prove \eqref{kth-v-ineq}. 
For $k\in\mathbb{N}\cup\{0\}$, $\partial_x^kv(x,t,\tau)$ is transformed into 
\begin{align}
\partial_x^kv(x,t,\tau)
=&\,\bigl(\partial_x^{k+1}G(\,\cdot\,,t-\tau)\ast(\mathcal{P}f^w)(\,\cdot\,,\tau)\bigr)(x) \notag \\
=&\,\int_{\mathbb{R}}\partial_x^{k+1}G(x-y,t-\tau)\bigl\{(\mathcal{P}f^w)(y,\tau)-(\mathcal{P}f^w)_{t-\tau}(y,\tau)\bigr\}\,dy \notag \\
&\,+\int_{\mathbb{R}}\partial_x^{k+1}G(x-y,t-\tau)(\mathcal{P}f^w)_{t-\tau}(y,\tau)\,dy \notag \\
=&\,\int_{\mathbb{R}}\partial_x^{k+1}G(x-y,t-\tau)\bigl\{(\mathcal{P}f^w)(y,\tau)-(\mathcal{P}f^w)_{t-\tau}(y,\tau)\bigr\}\,dy \notag \\
&\,+(-1)^{k+1}\int_{\mathbb{R}}\partial_y^{k+1}G(x-y,t-\tau)(\mathcal{P}f^w)_{t-\tau}(y,\tau)\,dy \notag \\
=&\,\int_{\mathbb{R}}\partial_x^{k+1}G(x-y,t-\tau)\bigl\{(\mathcal{P}f^w)(y,\tau)-(\mathcal{P}f^w)_{t-\tau}(y,\tau)\bigr\}\,dy \notag \\
&\,+(-1)^k\int_{\mathbb{R}}\partial_y^kG(x-y,t-\tau)\partial_y(\mathcal{P}f^w)_{t-\tau}(y,\tau)\,dy. \label{kthD-v}
\end{align}
By virtue of \eqref{kthD-v}, Lemma \ref{lem:int_DG} with $\ell=0$, Lemma \ref{lem:tau-ineq}, and \eqref{Pf-Holder}, we have 
\begin{align*}
|\partial_x^kv(x,t,\tau)|
\lesssim&\,(t-\tau)^{-\frac{k+1}4}[\mathcal{P}f^w(\,\cdot\,,\tau)]_\gamma(t-\tau)^{\frac{\gamma}4} \\
&\,+(t-\tau)^{-\frac{k}4}[\mathcal{P}f^w(\,\cdot\,,\tau)]_\gamma(t-\tau)^{\frac{\gamma-1}4} \\
\lesssim&\,(t-\tau)^{\frac{-k-1+\gamma}4}[f^w(\,\cdot\,,\tau)]_\gamma \\
\lesssim&\,(t-\tau)^{\frac{-k-1+\gamma}4}\tau^{-\frac{2+\gamma}4}\|f^w\|_{B_{\frac{2+\gamma}4}((0,T];BUC^{\gamma}(\mathbb{R}_+))}.
\end{align*}
Since 
\begin{align*}
\int_0^t(t-\tau)^{\frac{-k-1+\gamma}4}\tau^{-\frac{2+\gamma}4}\,d\tau
=&\,t^{\frac{-k-1+\gamma}4-\frac{2+\gamma}4+1}B\Bigl(\frac{2-\gamma}4,\frac{3-k+\gamma}4\Bigr) \\
\lesssim&\,t^{-\frac{2+\gamma}4}T^{\frac{3-k+\gamma}4}, 
\end{align*}
for $k\in\{0,1,2,3\}$ and $0<t\le T$, we can confirm \eqref{kth-v-ineq}. 

Let us derive \eqref{3rd-v-Holder-ineq}. Applying \eqref{interpolation} with $\alpha=\gamma/4$, we see that 
\[
[\partial_x^3v(\,\cdot\,,t,\tau)]_\gamma\lesssim\|\partial_x^3v(\,\cdot\,,t,\tau)\|_{D_G(\frac{\gamma}4,\infty)}, 
\]
where we use $\|\partial_x^3v(\,\cdot\,,t,\tau)\|_\infty/T^{\frac{\gamma}4}$ instead of $\|\partial_x^3v(\,\cdot\,,t,\tau)\|_\infty$. 
It follows from \eqref{kthD-v}, Lemma \ref{lem:int_DG} with $(k,\ell)=(8,0),(7,0)$, 
Lemma \ref{lem:tau-ineq}, and \eqref{Pf-Holder} that for $0<r\le1$
\begin{align*}
&\hspace*{-8pt}
\bigl|\partial_x^4G(\,\cdot\,,r)\ast\partial_x^3v(\,\cdot\,,t,\tau)\bigr| \\
=&\,\bigl|\partial_x^4G(\,\cdot\,,r)\ast\bigl(\partial_x^4G(\,\cdot\,,t-\tau)\ast(\mathcal{P}f^w)(\,\cdot\,,\tau)\bigr)\bigr| \\
=&\,\bigl|\partial_x^8G(\,\cdot\,,t+r-\tau)\ast(\mathcal{P}f^w)(\,\cdot\,,\tau)\bigr| \\
\le&\,\int_{\mathbb{R}}\bigl|\partial_x^8G(x-y,t+r-\tau)\bigr|\bigl|(\mathcal{P}f^w)(y,\tau)-(\mathcal{P}f^w)_{t+r-\tau}(y,\tau)\bigr|\,dy \\
&\,+\int_{\mathbb{R}}\bigl|\partial_y^7G(x-y,t+r-\tau)\bigr|\bigl|\partial_y(\mathcal{P}f^w)_{t+r-\tau}(y,\tau)\bigr|\,dy \\
\lesssim&\,(t+r-\tau)^{\frac{\gamma}4-2}\tau^{-\frac{2+\gamma}4}\|f^w\|_{B_{\frac{2+\gamma}4}((0,T];BUC^{\gamma}(\mathbb{R}_+))}.
\end{align*}
Here we obtain 
\begin{align*}
&\hspace*{-8pt}
\int_0^t(t+r-\tau)^{\frac{\gamma}4-2}\tau^{-\frac{2+\gamma}4}\,d\tau \\
=&\,\int_0^{\frac{t}2}(t+r-\tau)^{\frac{\gamma}4-2}\tau^{-\frac{2+\gamma}4}\,d\tau
+\int_{\frac{t}2}^t(t+r-\tau)^{\frac{\gamma}4-2}\tau^{-\frac{2+\gamma}4}\,d\tau \\
\le&\,\Bigl(\frac{t}2+r\Bigr)^{\frac{\gamma}4-2}\int_0^{\frac{t}2}\tau^{-\frac{2+\gamma}4}\,d\tau
+\Bigl(\frac{t}2\Bigr)^{-\frac{2+\gamma}4}\int_{\frac{t}2}^t(t+r-\tau)^{\frac{\gamma}4-2}\,d\tau \\
\lesssim&\,r^{\frac{\gamma}4-1}t^{-1}\Bigl(\frac{t}2\Bigr)^{1-\frac{2+\gamma}4}
+t^{-\frac{2+\gamma}4}\Bigl\{r^{\frac{\gamma}4-1}-\Bigl(\frac{t}2+r\Bigr)^{\frac{\gamma}4-1}\Bigr\} \\
\lesssim&\,r^{\frac{\gamma}4-1}t^{-\frac{2+\gamma}4}.
\end{align*}
This implies 
\begin{align*}
&\sup_{t\in(0,T]}t^{\frac{2+\gamma}4}\int_0^t\bigl(
\sup_{0<r\le1}r^{1-\frac{\gamma}4}\bigl\|\partial_x^4G(\,\cdot\,,r)\ast\partial_x^3v(\,\cdot\,,t,\tau)\bigr\|_{BUC(\mathbb{R})}\bigr)\,d\tau \\
&\lesssim\|f^w\|_{B_{\frac{2+\gamma}4}((0,T];BUC^{\gamma}(\mathbb{R}_+))}.
\end{align*}
Combining this and \eqref{kth-v-ineq} with $k=3$, we find that 
\[
\sup_{t\in(0,T]}t^{\frac{2+\gamma}4}\int_0^t\|\partial_x^3v(\,\cdot\,,t,\tau)\|_{D_G(\frac{\gamma}4,\infty)}\,d\tau
\lesssim\|f^w\|_{B_{\frac{2+\gamma}4}((0,T];BUC^{\gamma}(\mathbb{R}_+))}.
\]
Consequently, we can verify \eqref{3rd-v-Holder-ineq}. 
\end{proof}

\begin{lemma}\label{lem:v-ineq-2}
There exists $C>0$ such that 
\begin{align*}
&\sup_{\begin{subarray}{c}s,t\in(0,T] \\ t>s\end{subarray}}s^{\frac{2+\gamma}4}\frac{\,\displaystyle\biggl\|\int_0^tv(\,\cdot\,,t,\tau)\,d\tau
-\int_0^sv(\,\cdot\,,s,\tau)\,d\tau\biggr\|_{BUC^2(\mathbb{R}_+)}\,}{(t-s)^{\frac{1+\gamma}4}} \\
&\ \le C\|f^w\|_{B_{\frac{2+\gamma}4}((0,T];BUC^{\gamma}(\mathbb{R}_+))}. \notag
\end{align*}
\end{lemma}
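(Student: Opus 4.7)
The plan is to split the time difference as
\begin{equation*}
\int_0^t v(\cdot,t,\tau)\,d\tau - \int_0^s v(\cdot,s,\tau)\,d\tau = I_1 + I_2,
\end{equation*}
with $I_1 := \int_s^t v(\cdot,t,\tau)\,d\tau$ and $I_2 := \int_0^s \{v(\cdot,t,\tau) - v(\cdot,s,\tau)\}\,d\tau$, and to prove for each $k\in\{0,1,2\}$ and $0<s<t\le T$ that
\begin{equation*}
\|\partial_x^k I_j\|_\infty \lesssim s^{-\frac{2+\gamma}4}(t-s)^{\frac{1+\gamma}4}T^{\frac{2-k}4}\|f^w\|_{B_{\frac{2+\gamma}4}((0,T];BUC^{\gamma}(\mathbb{R}_+))} \quad (j=1,2).
\end{equation*}
Summing the three values of $k$ against the weights $T^{-(2-k)/4}$ in the equivalent norm of $BUC^2(\mathbb{R}_+)$ introduced in Section~\ref{sec:Preliminaries} then yields the lemma.

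For $I_1$ I would reuse the pointwise bound that underlies the proof of Lemma~\ref{lem:v-ineq-1},
\begin{equation*}
\|\partial_x^k v(\cdot,t,\tau)\|_\infty \lesssim (t-\tau)^{\frac{\gamma-k-1}4}\tau^{-\frac{2+\gamma}4}\|f^w\|_{B_{\frac{2+\gamma}4}((0,T];BUC^{\gamma}(\mathbb{R}_+))},
\end{equation*}
and integrate in $\tau\in[s,t]$: using $\tau^{-(2+\gamma)/4}\le s^{-(2+\gamma)/4}$ and $\int_s^t(t-\tau)^{(\gamma-k-1)/4}\,d\tau\lesssim(t-s)^{(3-k+\gamma)/4}$, which factorises as $(t-s)^{(1+\gamma)/4}(t-s)^{(2-k)/4}\le(t-s)^{(1+\gamma)/4}T^{(2-k)/4}$ for $k\le 2$, gives the bound for $I_1$ at once.

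For $I_2$, I would write $v(\cdot,t,\tau)-v(\cdot,s,\tau)=\int_s^t\partial_\sigma v(\cdot,\sigma,\tau)\,d\sigma$ and use $\partial_t G=-\partial_x^4 G$ to obtain
\begin{equation*}
\partial_\sigma\partial_x^k v(x,\sigma,\tau) = -\int_{\mathbb{R}}\partial_x^{k+5}G(x-y,\sigma-\tau)(\mathcal{P}f^w)(y,\tau)\,dy.
\end{equation*}
The kernel $\partial_x^{k+5}G$ is too singular to handle head-on, so the same mollifier-plus-integration-by-parts trick as in~\eqref{kthD-v} is needed: split $\mathcal{P}f^w=\{\mathcal{P}f^w-(\mathcal{P}f^w)_{\sigma-\tau}\}+(\mathcal{P}f^w)_{\sigma-\tau}$, move four derivatives off $G$ onto the mollified factor by integration by parts, and then invoke Lemma~\ref{lem:int_DG} together with (a straightforward iterate of) Lemma~\ref{lem:tau-ineq}. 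Both pieces produce the pointwise bound
\begin{equation*}
\|\partial_\sigma\partial_x^k v(\cdot,\sigma,\tau)\|_\infty \lesssim (\sigma-\tau)^{\frac{\gamma-k-5}4}\tau^{-\frac{2+\gamma}4}\|f^w\|_{B_{\frac{2+\gamma}4}((0,T];BUC^{\gamma}(\mathbb{R}_+))},
\end{equation*}
whose $\sigma$-integral on $[s,t]$ is proportional to $\bigl[(s-\tau)^{(\gamma-k-1)/4}-(t-\tau)^{(\gamma-k-1)/4}\bigr]\tau^{-(2+\gamma)/4}\|f^w\|$.

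The main obstacle is to extract the exact Hölder exponent $(t-s)^{(1+\gamma)/4}$ from the remaining $\tau$-integral, for which I would use the standard near/far split of $(0,s)$ at $\tau=s-(t-s)$ (in the case $t-s>s$, the pointwise bounds already used for $I_1$ are strong enough). On the far region $\tau\in(0,s-(t-s))$, the mean value theorem in $\sigma$ replaces the difference in the brackets by $(t-s)(s-\tau)^{(\gamma-k-5)/4}$, producing an integrable singularity; on the near region $\tau\in(s-(t-s),s)$, one uses the triangle inequality and estimates each of the two terms separately over an interval of length $(t-s)$. In both regions the $\tau$-integrals assemble to $s^{-(2+\gamma)/4}(t-s)^{(3-k+\gamma)/4}\|f^w\|$, which factors as $s^{-(2+\gamma)/4}(t-s)^{(1+\gamma)/4}T^{(2-k)/4}\|f^w\|$ exactly as required.
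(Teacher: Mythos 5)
Your proposal is correct, and for the first piece $\int_s^t v(\,\cdot\,,t,\tau)\,d\tau$ it coincides with the paper's argument (same pointwise bound from the proof of \eqref{kth-v-ineq}, same integration in $\tau$). For the second piece $\int_0^s\{v(\,\cdot\,,t,\tau)-v(\,\cdot\,,s,\tau)\}\,d\tau$ the two proofs genuinely diverge. The paper does \emph{not} integrate $\partial_\sigma v$ in time: it keeps the representation \eqref{kthD-v} with mollification scales $t-\tau$ and $s-\tau$, splits the difference into a part where the mollified data changes --- which forces a lengthy estimate of $\rho_{t-\tau}-\rho_{s-\tau}$ --- and a part where the kernel changes, and tames the latter via the double-difference identity $\partial_x^{k+1}G(\,\cdot\,,t-\tau)-\partial_x^{k+1}G(\,\cdot\,,s-\tau)=-\int_s^t\partial_\sigma^2\partial_x^{k+1}G(\,\cdot\,,\omega_{\theta,\sigma-\tau,\sigma})\,\tau\,d\sigma+\partial_\sigma\partial_x^{k+1}G(\,\cdot\,,\omega_{\theta,s,t})(t-s)$, whose explicit factor $\tau$ compensates the singularity as $\tau\to0$. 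Your route --- differentiating in $\sigma$, mollifying at the single scale $\sigma-\tau$, and extracting the H\"older exponent by a near/far split of the $\tau$-integral --- reaches the same bound $s^{-\frac{2+\gamma}4}(t-s)^{\frac{1+\gamma}4}T^{\frac{2-k}4}\|f^w\|_{B_{\frac{2+\gamma}4}((0,T];BUC^{\gamma}(\mathbb{R}_+))}$ while avoiding any comparison of mollifications at two different scales; the price is the more singular kernel $\partial_x^{k+5}G$ and the case analysis in $\tau$, and both prices are affordable. One small adjustment: place the crude-bound threshold at, say, $t-s\ge s/2$ rather than $t-s\ge s$ (or control the near region by the full Beta integral $\int_0^s(s-\tau)^{\frac{\gamma-k-1}4}\tau^{-\frac{2+\gamma}4}\,d\tau\lesssim s^{\frac{1-k}4}$, which suffices whenever $t-s\gtrsim s$), since for $\tau$ near $s-(t-s)$ the weight $\tau^{-\frac{2+\gamma}4}$ is comparable to $s^{-\frac{2+\gamma}4}$ only when $t-s$ is a definite fraction of $s$.
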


\begin{proof}
We first observe that 
\begin{align*}
&\hspace*{-8pt}
\int_0^t\partial_x^kv(x,t,\tau)\,d\tau-\int_0^s\partial_x^kv(x,s,\tau)\,d\tau \\
=&\,\int_s^t\partial_x^kv(x,t,\tau)\,d\tau+\int_0^s\bigl\{\partial_x^kv(x,t,\tau)-\partial_x^kv(x,s,\tau)\bigr\}\,d\tau
\end{align*}
As for the first term, it follows from an argument similar to the proof of \eqref{kth-v-ineq} that 
\[
|\partial_x^kv(x,t,\tau)|
\lesssim(t-\tau)^{\frac{-k-1+\gamma}4}\tau^{-\frac{2+\gamma}4}\|f^w\|_{B_{\frac{2+\gamma}4}((0,T];BUC^{\gamma}(\mathbb{R}_+))}.
\]
This implies that for $k\in\{0,1,2\}$ 
\begin{align*}
\int_s^t|\partial_x^kv(x,t,\tau)|\,d\tau
\lesssim&\,\int_s^t(t-\tau)^{\frac{-k-1+\gamma}4}\tau^{-\frac{2+\gamma}4}\,d\tau\,\|f^w\|_{B_{\frac{2+\gamma}4}((0,T];BUC^{\gamma}(\mathbb{R}_+))} \\
\lesssim&\,s^{-\frac{2+\gamma}4}(t-s)^{\frac{3-k+\gamma}4}\,\|f^w\|_{B_{\frac{2+\gamma}4}((0,T];BUC^{\gamma}(\mathbb{R}_+))} \\
\lesssim&\,s^{-\frac{2+\gamma}4}(t-s)^{\frac{1+\gamma}4}\,\|f^w\|_{B_{\frac{2+\gamma}4}((0,T];BUC^{\gamma}(\mathbb{R}_+))}\,T^{\frac{2-k}4}.
\end{align*}
Let us consider the second term. From the definition of $v(x,t,\tau)$ and 
\[
\partial_x^{k+1}G(\,\cdot\,,t-\tau)-\partial_x^{k+1}G(\,\cdot\,,s-\tau)=\int_{s-\tau}^{t-\tau}\partial_\sigma\partial_x^{k+1}G(\,\cdot\,,\sigma)\,d\sigma, 
\]
$\partial_x^kv(x,t,\tau)-\partial_x^kv(x,s,\tau)$ is transformed into
\begin{align*}
&\hspace*{-8pt}
\partial_x^kv(x,t,\tau)-\partial_x^kv(x,s,\tau) \\
=&\,\int_{\mathbb{R}}\biggl(\int_{s-\tau}^{t-\tau}\partial_\sigma\partial_x^{k+1}G(x-y,\sigma)\,d\sigma\biggr)(\mathcal{P}f^w)(y,\tau)\,dy \\
=&\,\int_{s-\tau}^{t-\tau}\int_{\mathbb{R}}\partial_\sigma\partial_x^{k+1}G(x-y,\sigma)
\bigl\{(\mathcal{P}f^w)(y,\tau)-(\mathcal{P}f^w)_\sigma(y,\tau)\bigr\}\,dyd\sigma \\
&\,+\int_{s-\tau}^{t-\tau}\int_{\mathbb{R}}\partial_\sigma\partial_x^{k+1}G(x-y,\sigma)(\mathcal{P}f^w)_\sigma(y,\tau)\,dyd\sigma \\
=&\,\int_{s-\tau}^{t-\tau}\int_{\mathbb{R}}\partial_\sigma\partial_x^{k+1}G(x-y,\sigma)
\bigl\{(\mathcal{P}f^w)(y,\tau)-(\mathcal{P}f^w)_\sigma(y,\tau)\bigr\}\,dyd\sigma \\
&\,+(-1)^k\int_{s-\tau}^{t-\tau}\int_{\mathbb{R}}\partial_\sigma\partial_y^kG(x-y,\sigma)\partial_y(\mathcal{P}f^w)_\sigma(y,\tau)\,dyd\sigma.
\end{align*}
Applying Lemma \ref{lem:int_DG} with $\ell=1$, Lemma \ref{lem:tau-ineq}, and \eqref{Pf-Holder}, we have 
\begin{align*}
&\hspace*{-8pt}
|\partial_x^kv(x,t,\tau)-\partial_x^kv(x,s,\tau)| \\
\lesssim&\,\biggl\{\int_{s-\tau}^{t-\tau}\bigl(\sigma^{-\frac{4+(k+1)}4+\frac{\gamma}4}+\sigma^{-\frac{4+k}4+\frac{\gamma-1}4}\bigr)\,d\sigma\biggr\}\,
\tau^{-\frac{2+\gamma}4}\|f^w\|_{B_{\frac{2+\gamma}4}((0,T];BUC^{\gamma}(\mathbb{R}_+))}.
\end{align*}
For $k\in\{0,1\}$, it follows that 
\begin{align*}
&\hspace*{-8pt}
\int_0^s\biggl(\int_{s-\tau}^{t-\tau}\sigma^{\frac{\gamma-(k+1)}4-1}\,d\sigma\biggr)\,\tau^{-\frac{2+\gamma}4}\,d\tau \\
=&\,\int_0^s\biggl(\int_{s-\tau}^{t-\tau}\sigma^{\frac{1+\gamma}4-1}\,d\sigma\biggr)\,(s-\tau)^{-\frac{k+2}4}\tau^{-\frac{2+\gamma}4}\,d\tau \\
\lesssim&\,(t-s)^{\frac{1+\gamma}4}\int_0^s(s-\tau)^{-\frac{k+2}4}\tau^{-\frac{2+\gamma}4}\,d\tau \\
=&\,(t-s)^{\frac{1+\gamma}4}s^{-\frac{k+2}4-\frac{2+\gamma}4+1}B\Bigl(\frac{2-\gamma}4,\frac{2-k}4\Bigr) \\
\lesssim&\,(t-s)^{\frac{1+\gamma}4}s^{-\frac{2+\gamma}4}T^{\frac{2-k}4}.
\end{align*}
For $k=2$, we see that 
\begin{align*}
&\hspace*{-8pt}
\int_0^s\biggl(\int_{s-\tau}^{t-\tau}\sigma^{\frac{\gamma-3}4-1}\,d\sigma\biggr)\,\tau^{-\frac{2+\gamma}4}\,d\tau \\
=&\,\frac4{3-\gamma}\int_0^s\bigl\{(s-\tau)^{\frac{\gamma-3}4}-(t-\tau)^{\frac{\gamma-3}4}\bigr\}\,\tau^{-\frac{2+\gamma}4}\,d\tau \\
=&\,\frac4{3-\gamma}\biggl\{\int_0^{\frac{s}2}\bigl\{(s-\tau)^{\frac{\gamma-3}4}-(t-\tau)^{\frac{\gamma-3}4}\bigr\}\,\tau^{-\frac{2+\gamma}4}\,d\tau \\
&\,\hspace*{30pt}+\int_{\frac{s}2}^s\bigl\{(s-\tau)^{\frac{\gamma-3}4}-(t-\tau)^{\frac{\gamma-3}4}\bigr\}\,\tau^{-\frac{2+\gamma}4}\,d\tau\biggr\} \\
=:&\,\frac4{3-\gamma}(I_1+I_2). 
\end{align*}
The estimates of $I_1$ and $I_2$ are given by 
\begin{align*}
I_1=&\,\int_0^{\frac{s}2}(s-\tau)^{-\frac{3-\gamma}4}(t-\tau)^{-\frac{3-\gamma}4}
\bigl\{(t-\tau)^{\frac{3-\gamma}4}-(s-\tau)^{\frac{3-\gamma}4}\bigr\}\,\tau^{-\frac{2+\gamma}4}\,d\tau \\
\le&\,\Bigl(\frac{s}2\Bigr)^{-\frac{3-\gamma}4}\Bigl(t-\frac{s}2\Bigr)^{-\frac{3-\gamma}4}(t-s)^{\frac{3-\gamma}4}
\int_0^{\frac{s}2}\tau^{-\frac{2+\gamma}4}\,d\tau \\
\lesssim&\,\Bigl(\frac{s}2\Bigr)^{-1}\Bigl(\frac{t}2\Bigr)^{-\frac{2-2\gamma}4}(t-s)^{\frac{1+\gamma}4}t^{\frac{2-2\gamma}4}
\Bigl(\frac{s}2\Bigr)^{1-\frac{2+\gamma}4} \\
\lesssim&\,s^{-\frac{2+\gamma}4}(t-s)^{\frac{1+\gamma}4}, \\
I_2\le&\,\Bigl(\frac{s}2\Bigr)^{-\frac{2+\gamma}4}\int_{\frac{s}2}^s\bigl\{(s-\tau)^{\frac{\gamma-3}4}-(t-\tau)^{\frac{\gamma-3}4}\bigr\}\,d\tau \\
\lesssim&\,s^{-\frac{2+\gamma}4}\Bigr[(t-s)^{\frac{1+\gamma}4}
-\Bigl\{\Bigl(t-\frac{s}2\Bigr)^{\frac{1+\gamma}4}-\Bigl(\frac{s}2\Bigr)^{\frac{1+\gamma}4}\Bigr\}\Bigr] \\
\lesssim&\,s^{-\frac{2+\gamma}4}(t-s)^{\frac{1+\gamma}4}.
\end{align*}
In the estimate of $I_1$, we have used
\[
\Bigl(\frac{s}2\Bigr)^{-\frac{3-\gamma}4}\Bigl(t-\frac{s}2\Bigr)^{-\frac{3-\gamma}4}
\le\Bigl(\frac{s}2\Bigr)^{-1}\Bigl(\frac{s}2\Bigr)^{\frac{1+\gamma}4}\Bigl(\frac{t}2\Bigr)^{-\frac{3-\gamma}4}
\le\Bigl(\frac{s}2\Bigr)^{-1}\Bigl(\frac{t}2\Bigr)^{-\frac{2-2\gamma}4}. 
\]
These estimates yield the desired result. 
\end{proof}

\smallskip
We incorporate the estimate of $\|f^w\|_{B_{\frac{2+\gamma}4}((0,T];BUC^{\gamma}(\mathbb{R}_+))}$ in the appendix. 
We are ready to prove Theorem \ref{thm:existence-selfsimilar}. 

\bigskip\noindent
\textit{Proof of Theorem \ref{thm:existence-selfsimilar}.}
For $\beta\in(0,\pi/2)$ and $M>0$, set 
\begin{align*}
\mathcal{Z}_{\beta,M}
:=&\,\{w\in B_{\frac{2+\gamma}4}((0,T];BUC^{3+\gamma}(\mathbb{R}_+))\cap C_{\frac{2+\gamma}4}^{\frac{1+\gamma}4}((0,T];BUC^2(\mathbb{R}_+))\,| \\
&\hspace*{8.5pt} 
w(x,0)\equiv0,\ \|w_x-U^w_x\|_{B([0,T],BUC( \mathbb{R}_+))}\le\tan\beta, \\
&\hspace*{8.5pt} 
\|w\|_{B_{\frac{2+\gamma}4}((0,T];BUC^{3+\gamma}(\mathbb{R}_+))}+\|w\|_{C_{\frac{2+\gamma}4}^{\frac{1+\gamma}4}((0,T];BUC^2(\mathbb{R}_+))}\le M\}.
\end{align*}
Then we define the mapping $\mathcal{F}$ on $\mathcal{Z}_{\beta,M}$ as 
\begin{equation*}
(\mathcal{F}w)(x,t):=-\int_0^t\int_{\mathbb{R}}\partial_xG(x-y,t-\tau)(\mathcal{P}f^w)(y,\tau)\,dyd\tau+U^w(x,t)
\end{equation*}
for $w\in\mathcal{Z}_{\beta,M}$. We show that $\mathcal{F}$ is a contraction mapping on $\mathcal{Z}_{\beta,M}$ 
for a suitable constant $\beta>0$, which implies that $\mathcal{F}$ has a unique fixed-point $w\in\mathcal{Z}_{\beta,M}$ 
by Banach's fixed-point theorem. 

Let us prove that $\mathcal{F}$ maps $\mathcal{Z}_{\beta,M}$ into itself. Set 
\[
\|w\|_{\mathcal{Z}_{\beta,M}}
:=\|w\|_{B_{\frac{2+\gamma}4}((0,T];BUC^{3+\gamma}(\mathbb{R}_+))}
+\|w\|_{C_{\frac{2+\gamma}4}^{\frac{1+\gamma}4}((0,T];BUC^2(\mathbb{R}_+))}.
\]
Taking account of 
\begin{align}
\|w_x\|_{B([0,T];BUC(\mathbb{R}_+))}
\le&\,\|w_x-U^w_x\|_{B([0,T];BUC(\mathbb{R}_+))}+\|U^w_x\|_{B([0,T];BUC(\mathbb{R}_+))} \notag \\
\le&\,C_M\tan\beta \label{est-w_x}
\end{align}
for a constant $C_M>0$, it follows from Proposition \ref{prop:linear-est}, Lemma \ref{lem:v-ineq-1}, Lemma \ref{lem:v-ineq-2}, 
and Lemma \ref{lem:f-Holder} that 
\begin{align*}
&\|\mathcal{F}w\|_{\mathcal{Z}_{\beta,M}}
\le C_M(\|w_x\|_{B([0,T];BUC(\mathbb{R}_+))}^2+\tan\beta) 
\le C_{M,1}(1+\tan\beta)\tan\beta, \\
&\|(\mathcal{F}w)_x-U^w_x\|_{B([0,T];BUC(\mathbb{R}_+))}
\le C_M\|w_x\|_{B([0,T];BUC(\mathbb{R}_+))}^2 
\le C_{M,2}\tan^2\beta
\end{align*}
for some constants $C_M>0$ and $C_{M,i}>0\,(i=1,2)$. These inequalities yield that 
\[
\|\mathcal{F}w\|_{\mathcal{Z}_{\beta,M}}\le M, \quad \|(\mathcal{F}w)_x-U^w_x\|_{B([0,T];BUC(\mathbb{R}_+))}\le\tan\beta
\]
provided that 
\[
0<\tan\beta<\min\biggl\{1,\frac{M}{2C_{M,1}},\frac1{C_{M,2}}\biggr\}. 
\]

Let us prove that $\mathcal{F}$ is a contraction mapping on $\mathcal{Z}_{\beta,M}$. 
Take $w^{(j)}\in\mathcal{Z}_{\beta,M}\,(j=1,2)$. Since 
\begin{align*}
&\hspace*{-8pt}
(\mathcal{F}w^{(1)})(x,t)-(\mathcal{F}w^{(2)})(x,t) \\
=&\,-\int_0^t\int_{\mathbb{R}}\partial_xG(x-y,t-\tau)\bigl\{(\mathcal{P}f^{w^{(1)}})(y,\tau)-(\mathcal{P}f^{w^{(2)}})(y,\tau)\bigr\}\,dy\,d\tau \\[0.1cm]
&\,+U^{w^{(1)}}(x,t)-U^{w^{(2)}}(x,t),
\end{align*}
Corollary \ref{cor:differ-linear-est}, Lemma \ref{lem:v-ineq-1}, Lemma \ref{lem:v-ineq-2}, and Lemma \ref{lem:differ-f-Holder} imply that 
there exists $C_M>0$ such that 
\begin{align*}
&\hspace*{-8pt}
\|\mathcal{F}w^{(1)}-\mathcal{F}w^{(2)}\|_{\mathcal{Z}_{\beta,M}} \\
\le&\,C_M\bigl\{\bigl(\max_{j=1,2}\|w^{(j)}_x\|_{B([0,T];BUC(\mathbb{R}_+)}\bigr)
\|w^{(1)}-w^{(2)}\|_{B_{\frac{2+\gamma}4}((0,T];BUC^{3+\gamma}(\mathbb{R}_+))} \\
&\hspace*{20pt} 
+\|b^{w^{(1)}}-b^{w^{(2)}}\|_{C_{\frac{3+\gamma}4}^{\frac{1+\gamma}4}((0,T];\mathbb{R})}\tan\beta\bigr\},
\end{align*}
where $b^{w^{(j)}}(t):=(w^{(j)}_{xx}(0,t))^2$. From the definition of $b^{w^{(j)}}(t)$, we see that 
\begin{align*}
&\hspace*{-8pt}
|b^{w^{(1)}}(t)-b^{w^{(2)}}(t)-(b^{w^{(1)}}(s)-b^{w^{(2)}}(s))| \\
\le&\,\bigl|(w^{(1)}_{xx}(0,t))^2-(w^{(2)}_{xx}(0,t))^2-\{(w^{(1)}_{xx}(0,s))^2-(w^{(2)}_{xx}(0,s))^2\}\bigr| \\
\le&\,\bigl(\bigl|w^{(1)}_{xx}(0,t)\bigr|+\bigl|w^{(2)}_{xx}(0,t)\bigr|\bigr) \\
&\,\bigl|w^{(1)}_{xx}(0,t)-w^{(2)}_{xx}(0,t)-\bigl(w^{(1)}_{xx}(0,s)-w^{(2)}_{xx}(0,s)\bigr)\bigr| \\
&\,+\bigl(\bigl|w^{(1)}_{xx}(0,t)-w^{(1)}_{xx}(0,s)\bigr|+\bigl|w^{(2)}_{xx}(0,t)-w^{(2)}_{xx}(0,s)\bigr|\bigr) 
\bigl|w^{(1)}_{xx}(0,s)-w^{(2)}_{xx}(0,s)\bigr|,
\end{align*}
so that there exists $C_M>0$ such that 
\begin{align*}
&\hspace*{-8pt}
\|b^{w^{(1)}}-b^{w^{(2)}}\|_{C_{\frac{3+\gamma}4}^{\frac{1+\gamma}4}((0,T];\mathbb{R})}
\le C_M\|w^{(1)}-w^{(2)}\|_{C_{\frac{2+\gamma}4}^{\frac{1+\gamma}4}((0,T];BUC^2(\mathbb{R}_+))}.
\end{align*}
Thus we have 
\[
\|\mathcal{F}w^{(1)}-\mathcal{F}w^{(2)}\|_{\mathcal{Z}_{\beta,M}}
\le C_{M,3}\|w^{(1)}-w^{(2)}\|_{\mathcal{Z}_{\beta,M}}\tan\beta
\]
for a constant $C_{M,3}>0$. Consequently, setting 
\[
\tan\beta_\ast:=\min\biggl\{1,\frac{M}{2C_{M,1}},\frac1{C_{M,2}},\frac1{2C_{M,3}}\biggr\}, 
\]
$\mathcal{F}$ is a contraction mapping on $\mathcal{Z}_{\beta,M}$ provided that $\beta\in(0,\beta_\ast)$. 

\begin{remark}\label{rem:existence-selfsimilar}
From the integral equation in Theorem \ref{thm:existence-selfsimilar}, we see that 
\begin{align*}
w(0,t)
=&\,-\int_0^t\int_{\mathbb{R}}\partial_xG(-y,t-\tau)(\mathcal{P}f^w)(y,\tau)\,dy\,d\tau+U^w(0,t) \\
=&\,\int_0^t\int_{\mathbb{R}}\partial_xG(y,t-\tau)(\mathcal{P}f^w)(y,\tau)\,dy\,d\tau+U^w(0,t).
\end{align*}
Since $U^w(0,t)$ is given by 
\begin{align*}
U^w(0,t)=&\,-\frac{2\tan\beta}{\pi}\int_0^\infty\frac{1-e^{-\xi^4t}}{\xi^2}\,d\xi
+\frac{2c_\beta}{\pi}\int_0^t\int_0^\infty b^w(\tau)e^{-\xi^4(t-\tau)}\,d\xi d\tau \\
=&\,-\frac{2\tan\beta}{\pi}\Gamma\Bigl(\frac34\Bigr)t^{\frac14}+\frac{c_\beta}{2\pi}\Gamma\Bigl(\frac14\Bigr)\int_0^tb^w(\tau)(t-\tau)^{-\frac14}\,d\tau,
\end{align*}
and $b^w(t)$ and the first term of the right-hand side of $w(0,t)$ are estimated by 
\begin{align*}
&\|b^w\|_{B_{\frac12}((0,T];\mathbb{R})}
\le C_M\tan\beta, \\
&\biggl|\int_0^t\int_{\mathbb{R}}\partial_xG(y,t-\tau)(\mathcal{P}f^w)(y,\tau)\,dy\,d\tau\biggr|\le C_Mt^{\frac14}\tan^2\beta
\end{align*}
by virtue of \eqref{interpolation-1}, \eqref{est-w_x}, Lemma \ref{lem:v-ineq-1}, and Lemma \ref{lem:f-Holder}, we have 
\[
t^{-\frac14}w(0,t)\le -\frac{2\tan\beta}{\pi}\Gamma\Bigl(\frac34\Bigr)+C_M\tan^2\beta
\]
for $t>0$. Thus, if $\beta>0$ is sufficiently small such that 
\[
\tan\beta\le\frac1{\pi\,C_M}\Gamma\Bigl(\frac34\Bigr), 
\]
we can conclude that 
\[
t^{-\frac14}w(0,t)\le -\frac{\tan\beta}{\pi}\Gamma\Bigl(\frac34\Bigr)<0.
\]
This argument ensures that a self-similar solution obtained in Theorem \ref{thm:existence-selfsimilar} is not  $w(x,t)\equiv0$ 
as long as $\beta>0$. 
\end{remark}

\medskip
The mild solution $w$ in Theorem \ref{thm:existence-selfsimilar} is not differentiable with respect to $t$. 
Here we define a weak solution to the problem \eqref{eq:161031a2}--\eqref{eq:161031a5}. Set
\begin{align*}
\mathcal{C}_0^{\infty}
:=&\,\{\varphi\in C^{\infty}(\mathbb{R}_+\times[0,T])\mid \\
&\,\sup_{x\in\mathbb{R}_+}(1+x^2)^k|\partial_x^\ell\partial_t^m\varphi(x,t)|<\infty\,(t\in[0,T],k,\ell\in\mathbb{N}\cup\{0\},m\in\{0,1\}), \\
&\,\lim_{t\to T^-}|\varphi(x,t)|=0\ (x\in\mathbb{R}_+)\}
\end{align*}
and define $\langle\,\cdot\,,\,\cdot\,\rangle$ as 
\[
\langle f(\,\cdot\,,t),\varphi(\,\cdot\,,t)\rangle:=\int_0^\infty f(x,t)\varphi(x,t)\,dx
\]
for $f(\,\cdot\,,t)\in L_{loc}^1(\mathbb{R}_+)$ and $\varphi\in\mathcal{C}_0^{\infty}$. 

\begin{definition}[Weak solution]\label{def:weak-sol}
Let $a\in BUC^1(\mathbb{R}_+)$. We say that 
\[
w\in B_{\frac{2+\gamma}4}((0,T];BUC^{3+\gamma }(\mathbb{R}_+))\cap C_{\frac{2+\gamma}4}^{\frac{1+\gamma}4}((0,T];BUC^2(\mathbb{R}_+))
\]
is a weak solution to the problem \eqref{eq:161031a2}--\eqref{eq:161031a5} if $w$ satisfies 
\begin{align*}
&\hspace*{-8pt}
\int_0^T\langle w(\,\cdot\,,t),\varphi_t(\,\cdot\,,t),\rangle\,dt \\
=&\,-\langle a(\,\cdot\,),\varphi(\,\cdot\,,0)\rangle \\
&\,-\int_0^T\Bigl\langle\dfrac{1}{\bigl(1+(w_x(\,\cdot\,,t))^2\bigr)^{\frac12}}\partial_x\biggl(\dfrac{w_{xx}(\,\cdot\,,t)}{\bigl(1+(w_x(\,\cdot\,,t))^2\bigr)^{\frac32}}\biggr),
\varphi_x(\,\cdot\,,t),\Bigr\rangle\,dt
\end{align*}
for all $\varphi\in\mathcal{C}_0^{\infty}$ and $w_x(0,t)=\tan\beta$ for $t\in(0,T]$. 
\end{definition}

\noindent
Note that the case where $a(x)\equiv0$ implies the definition of a weak solution to the problem \eqref{SSP}, 
which is the initial boundary value problem for a bounded self-similar solution. 

\begin{proposition}\label{prop:weak-sol}
Let $w$ be the solution in Theorem \ref{thm:existence-selfsimilar}. Then $w$ is a weak solution to the problem \eqref{SSP}. 
\end{proposition}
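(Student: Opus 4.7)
The plan is to substitute the mild-solution representation \eqref{int-eq} from Theorem \ref{thm:existence} into $\int_0^T\langle w,\varphi_t\rangle\,dt$ and verify the weak identity piece by piece. Decompose
\[
w=v_0+J+U^w, \qquad v_0(x,t):=(G(\cdot,t)*\tilde{a})(x),
\]
\[
J(x,t):=-\int_0^t\!\!\int_{\mathbb{R}}\partial_xG(x-y,t-\tau)(\mathcal{P}f^w)(y,\tau)\,dy\,d\tau,
\]
and $U^w$ solving \eqref{LPw}. Each of $v_0$, $J$, $U^w$ is even in $x$ (for $v_0$ because $\tilde a$ and $G(\cdot,t)$ are even; for $J$ by a change of variables $y\mapsto -y$ exploiting the oddness of $\mathcal{P}f^w$ and of $\partial_xG(\cdot,t-\tau)$; for $U^w$ directly from \eqref{U_LP}). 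Extend $\varphi$ evenly to $\tilde{\varphi}$ on $\mathbb{R}$, analyze each piece on the whole line, and fold back to $\mathbb{R}_+$ by parity.

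For $v_0$, using $\partial_tv_0=-\partial_x^4v_0$ classically on $\mathbb{R}\times(0,T)$ with datum $\tilde a$, integration by parts once in $t$ (the terminal contribution is killed by $\tilde\varphi(\cdot,T)\equiv 0$) followed by one integration by parts in $x$ on $\mathbb{R}$ yields
\[
\int_0^T\!\!\int_0^{\infty}v_0\,\varphi_t\,dx\,dt=-\langle a,\varphi(\cdot,0)\rangle-\int_0^T\!\!\int_0^{\infty}(v_0)_{xxx}\,\varphi_x\,dx\,dt.
\]
For $U^w$, a direct integration by parts on $\mathbb{R}_+$ (using $U^w(\cdot,0)\equiv 0$, $U^w_t=-U^w_{xxxx}$, and the boundary value $U^w_{xxx}(0,t)=c_\beta b^w(t)$, with decay at $x=\infty$ supplied by the test function) gives
\[
\int_0^T\!\!\int_0^{\infty}U^w\,\varphi_t\,dx\,dt=-\int_0^T c_\beta b^w(\tau)\varphi(0,\tau)\,d\tau-\int_0^T\!\!\int_0^{\infty}U^w_{xxx}\,\varphi_x\,dx\,dt.
\]
For $J$, introduce an even-kernel mollification $(\mathcal{P}f^w)_\varepsilon$ in $y$; the corresponding $J_\varepsilon$ then solves $\partial_tJ_\varepsilon=-\partial_x^4J_\varepsilon-\partial_x(\mathcal{P}f^w)_\varepsilon$ on $\mathbb{R}\times(0,T)$ classically with $J_\varepsilon(\cdot,0)=0$. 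Testing against $\tilde\varphi$, integrating by parts once in $t$ and once in $x$ on $\mathbb{R}$, passing $\varepsilon\to 0$ via Lemmas \ref{lem:v-ineq-1} and \ref{lem:v-ineq-2}, and using $(\mathcal{P}f^w)(x,\tau)=f^w(x,\tau)-f^w(0,\tau)$ on $x\ge 0$ together with $\int_0^{\infty}\varphi_x(x,\tau)\,dx=-\varphi(0,\tau)$, one arrives at
\[
\int_0^T\!\!\int_0^{\infty}J\,\varphi_t\,dx\,dt=-\int_0^T\!\!\int_0^{\infty}J_{xxx}\varphi_x\,dx\,dt-\int_0^T\!\!\int_0^{\infty}f^w\varphi_x\,dx\,dt-\int_0^T f^w(0,\tau)\varphi(0,\tau)\,d\tau.
\]

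Summing the three identities and combining $(v_0)_{xxx}+J_{xxx}+U^w_{xxx}=w_{xxx}$ with the identity $w_{xxx}+\Xi=(1+w_x^2)^{-1/2}\partial_x\bigl(w_{xx}/(1+w_x^2)^{3/2}\bigr)$ (which follows from \eqref{formula_1} and \eqref{def_Xi}), the weak formulation is obtained up to the boundary residue
\[
-\int_0^T\bigl[f^w(0,\tau)+c_\beta b^w(\tau)\bigr]\varphi(0,\tau)\,d\tau.
\]
An elementary algebraic substitution using $w_x(0,\tau)=\tan\beta$ and the no-flux identity \eqref{transformed_2ndbc} $w_{xxx}(0,\tau)=c_\beta w_{xx}(0,\tau)^2$ in the definition \eqref{def_Xi} produces $\Xi(w_x,w_{xx},w_{xxx})|_{x=0}=-c_\beta w_{xx}(0,\tau)^2$, so the residue vanishes identically. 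The main technical obstacle will be the mollification-and-limit step for $J$: one must uniformly control $(J_\varepsilon)_{xxx}$ and pass to the limit in $(\mathcal{P}f^w)_\varepsilon$ paired against $\varphi_x$, which is done using the $BUC^{3+\gamma}$-in-$x$ and $C^{(1+\gamma)/4}$-in-$t$ regularity of $w$ together with the sharp Green-function bounds of Subsection \ref{subsec:Grenn-func} (notably Lemmas \ref{lem:int_DG} and \ref{lem:int_DG_Holder}).
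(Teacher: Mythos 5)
Your proposal is correct and follows essentially the same route as the paper: split $w$ into the three pieces of \eqref{int-eq}, integrate by parts in $t$ and once in $x$ for each piece, and cancel the boundary residue $f^w(0,\tau)+c_\beta b^w(\tau)$ using the contact angle and no-flux conditions (your algebraic identity $\Xi|_{x=0}=-c_\beta(w_{xx}(0,\tau))^2$ is exactly the paper's observation that $w_{xxx}(0,t)+f^w(0,t)=0$). The only technical variation is in the Duhamel term: you mollify the source so that $J_\varepsilon$ solves the inhomogeneous equation classically and then pass to the limit, whereas the paper manipulates the kernel directly (Fubini, integration by parts in $t$ using $\partial_tG=-\partial_x^4G$, and the oddness of $G(\cdot\,,0)(\mathcal{P}f^w)$ to absorb the $t=\tau$ diagonal term); both are legitimate. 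One step you should make explicit rather than assume: that the mild solution actually satisfies $w_{xxx}(0,t)=c_\beta(w_{xx}(0,t))^2$ (and $w_x(0,t)=\tan\beta$) is not part of the fixed-point construction per se, and the paper verifies it at the end from the representation \eqref{3rd_Dw} together with $\partial_x^3K(0,y,t)=0$ and the oddness of $\partial_x^4G(\,\cdot\,-y,t-\tau)(\mathcal{P}f^w)(y,\tau)$.
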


\begin{proof}
Considering $\langle\,\cdot\,,\varphi_t\rangle$ in the both sides of \eqref{SSP_int-eq} and integrating it with respect to $t$ on the interval $[0,T]$, 
we have 
\begin{align*}
&\hspace*{-12pt}
\int_0^T\langle w(\,\cdot\,,t),\varphi_t(\,\cdot\,,t)\rangle\,dt \\
=\,&\,-\int_0^T\Bigl\langle\int_0^t\int_{\mathbb{R}}\partial_xG(\,\cdot\,-y,t-\tau)(\mathcal{P}f^w)(y,\tau)\,dyd\tau,\varphi_t(\,\cdot\,,t)\Bigr\rangle\,dt \\
&\,+\int_0^T\langle U^w(\,\cdot\,,t),\varphi_t(\,\cdot\,,t)\rangle\,dt \\
=:&\,I_1+I_2. 
\end{align*}
Note that, since the integrals $I_i\,(i=1,2)$ are absolutely integrable from Proposition \ref{prop:linear-est}, Lemma \ref{lem:int_DG}, 
Lemma \ref{lem:tau-ineq}, and $\varphi\in\mathcal{C}_0^{\infty}$, the order of the iterated integrals can be changed. 
As for $I_1$, we first observe 
\begin{align*}
&\hspace*{-8pt}
\int_0^T\biggl(\int_0^t\partial_xG(x-y,t-\tau)(\mathcal{P}f^w)(y,\tau)\,d\tau\biggr)\varphi_t(x,t)\,dt \\
=&\,\int_0^T\biggl(\int_\tau^T\partial_xG(x-y,t-\tau)\varphi_t(x,t)\,dt\biggr)(\mathcal{P}f^w)(y,\tau)\,d\tau \\
=&\,-\int_0^T\partial_xG(x-y,0)\varphi(x,\tau)(\mathcal{P}f^w)(y,\tau)\,d\tau \\
&\,-\int_0^T\biggl(\int_\tau^T\partial_t\partial_xG(x-y,t-\tau)\varphi(x,t)\,dt\biggr)(\mathcal{P}f^w)(y,\tau)\,d\tau \\
=&\,-\int_0^T\partial_xG(x-y,0)(\mathcal{P}f^w)(y,\tau)\varphi(x,\tau)\,d\tau \\
&\,-\int_0^T\biggl(\int_0^t\partial_x\partial_tG(x-y,t-\tau)(\mathcal{P}f^w)(y,\tau)\,d\tau\biggr)\varphi(x,t)\,dt \\
=&\,-\int_0^T\partial_xG(x-y,0)(\mathcal{P}f^w)(y,\tau)\varphi(x,\tau)\,d\tau \\
&\,+\int_0^T\biggl(\int_0^t\partial_x^5G(x-y,t-\tau)(\mathcal{P}f^w)(y,\tau)\,d\tau\biggr)\varphi(x,t)\,dt. 
\end{align*}
In the last equality, we have used the equation $\partial_tG=-\partial_x^4G$. Taking account of the fact that 
$G(y,0)(\mathcal{P}f^w)(y,\tau)$ is an odd function with respect to $y$, it follows from the integration by parts that 
\begin{align*}
&\hspace*{-8pt}
\int_0^T\Bigr\langle\int_{\mathbb{R}}\partial_xG(\,\cdot\,-y,0)(\mathcal{P}f^w)(y,\tau)\,dy,\varphi(\,\cdot\,,\tau)\Bigr\rangle\,d\tau \\
=&\,-\int_0^T\Bigl\langle\int_{\mathbb{R}} G(\,\cdot\,-y,0)(\mathcal{P}f^w)(y,\tau)\,dy,\varphi_x(\,\cdot\,,\tau)\Bigr\rangle\,d\tau \\
=&\,-\int_0^T\langle(\mathcal{P}f^w)(\,\cdot\,,\tau),\varphi_x(\,\cdot\,,\tau)\rangle\,d\tau \\
=&\,-\int_0^T\langle f^w(\,\cdot\,,\tau)-f^w(0,\tau),\varphi_x(\,\cdot\,,\tau)\rangle\,d\tau \\
=&\,-\int_0^T\langle f^w(\,\cdot\,,\tau),\varphi_x(\,\cdot\,,\tau)\rangle\,d\tau-\int_0^Tf^w(0,\tau)\varphi(0,\tau)\,d\tau. 
\end{align*}
Furthermore, using the fact that $\partial_x^4G(y,t-\tau)(\mathcal{P}f^w)(y,\tau)$ is an odd function with respect to $y$, 
the integration by parts implies that 
\begin{align*}
&\hspace*{-8pt}
\int_0^T\Bigl\langle\int_0^t\int_{\mathbb{R}}\partial_x^5G(\,\cdot\,-y,t-\tau)(\mathcal{P}f^w)(y,\tau)\,dyd\tau,\varphi(\,\cdot\,,t)\Bigr\rangle\,dt \\
=&\,-\int_0^T\Bigl\langle\int_0^t\int_{\mathbb{R}}\partial_x^4G(\,\cdot\,-y,t-\tau)(\mathcal{P}f^w)(y,\tau)\,dyd\tau,\varphi_x(\,\cdot\,,t)\Bigr\rangle\,dt.
\end{align*}
As a result, we obtain
\begin{align*}
I_1=&\,-\int_0^T\langle f^w(\,\cdot\,,t),\varphi_x(\,\cdot\,,t)\rangle\,dt-\int_0^Tf^w(0,t)\varphi(0,t)\,dt \\
&\,+\int_0^T\Bigl\langle\int_0^t\int_{\mathbb{R}}\partial_x^4G(\,\cdot\,-y,t-\tau)(\mathcal{P}f^w)(y,\tau)\,dyd\tau,\varphi_x(\,\cdot\,,t)\Bigr\rangle\,dt.
\end{align*}
Concerning $I_2$, since $U$ is the solution to \eqref{LP}, we have
\[
I_2=-\int_0^Tc_\beta(w_{xx}(0,t))^2\varphi(0,t)\,dt-\int_0^\infty\langle\partial_x^3U^w(\,\cdot\,,t),\varphi_x(\,\cdot\,,t)\rangle\,dt. 
\]
Consequently, taking account of 
\begin{equation}\label{3rd_Dw}
w_{xxx}(x,t)
=-\int_0^t\int_{\mathbb{R}}\partial_x^4G(x-y,t-\tau)(\mathcal{P}f^w)(y,\tau)\,dyd\tau
+\partial_x^3U^w(x,t), 
\end{equation}
we see that 
\begin{align*}
\int_0^T\langle w(\,\cdot\,,t),\varphi_t(\,\cdot\,,t)\rangle\,dt 
=&\,-\int_0^T\langle w_{xxx}(\,\cdot\,,t)+f^w(\,\cdot\,,t),\varphi_x(\,\cdot\,,t)\rangle\,dt \\
&\,-\int_0^T\bigl\{f^w(0,t)+c_\beta(w_{xx}(0,t))^2\bigr\}\varphi(0,t)\,dt.
\end{align*}
Recalling \eqref{transformed_RHS}, the definition of $f^w$, and the no-flux condition \eqref{eq:161031a4}, it follows that 
\[
w_{xxx}+f^w=\dfrac{1}{(1+w_x^2)^{\frac12}}\partial_x\biggl(\dfrac{w_{xx}}{(1+w_x^2)^{\frac32}}\biggr), \quad w_{xxx}(0,t)+f^w(0,t)=0.
\]
Furthermore, by virtue of \eqref{3rd_Dw} 
and the fact that $G(y,t-\tau)(\mathcal{P}f^w)(y,\tau)$ is an odd function with respect to $y$, we have
\begin{align*}
w_{xxx}(0,t)=\partial_x^3U^w(0,t)=c_\beta(w_{xx}(0,t))^2.
\end{align*}
These calculations yield the desired result. 
\end{proof}

\begin{remark}
When we consider the stability of a self-similar solution, we need the existence of a solution to the problem \eqref{eq:161031a2}--\eqref{eq:161031a5}. 
Applying an argument similar to the proof of Theorem \ref{thm:existence-selfsimilar} and Proposition \ref{prop:weak-sol} to the integral equation 
\begin{align*}
w(x,t)=&\,\int_0^\infty K(x,y,t)a(y)\,dy-\int_0^t\int_{\mathbb{R}}\partial_xG(x-y,t-\tau)(\mathcal{P}f^w)(y,\tau)\,dy\,d\tau \notag \\
&\,+U^w(x,t)
\end{align*}
for $a\in BUC^1(\mathbb{R}_+)$, we can prove that a mild solution exists and it is a weak solution as in Definition \ref{def:weak-sol}. 
In the proof of the existence of a mild solution,  we replace $M$ and $\beta_\ast$ in Theorem \ref{thm:existence-selfsimilar} with 
$M_0:=C_0\|a'\|_\infty$ and $\beta_0$, where $C_0>0$ is a universal constant and $\beta_0$ is a constant depending on $M_0$. 
\end{remark}

\appendix 

\section{The estimates of the inhomogeneous term}\label{sec:est-inhom}

\subsection{Preliminary calculation}\label{subsec:pre-cal}
Recall \eqref{def_Xi} and set $\Phi_i(p)\,(i=1,2)$ as 
\[
\Phi_1(p)=\frac{1}{(1+p^2)^2}-1,\quad \Phi_2(p)=\frac{3p}{(1+p^2)^3}.
\]
Taking account of $\Phi_i(0)=0\,(i=1,2)$, we have 
\[
\Phi_i(p)=\int_0^p\Phi_i'(s)\,ds.
\]
This implies that 
\[
|\Phi_i(p)|\le\int_0^{|p|}|\Phi_i'(s)|\,ds\le\bigl(\sup_{|s|\le m}|\Phi_i'(s)|\bigr)|p|.
\]
Since 
\begin{align*}
\Phi_1'(p)=-\frac{4p}{(1+p^2)^3}, \quad \Phi_2'(p)=\frac{3(1-5p^2)}{(1+p^2)^4}, 
\end{align*}
it follows that for $|p|\le m$
\begin{equation}\label{bound-Phi}
|\Phi_1(p)|\le4m^2, \quad |\Phi_2(p)|\le3(1+5m^2)m.
\end{equation}
Furthermore, for $p(x),p(y)\,(x\ne y)$, we have 
\[
\Phi_i(p(x))-\Phi_i(p(y))=\int_0^{p(x)}\Phi_i'(s)\,ds-\int_0^{p(y)}\Phi_i'(s)\,ds=\int_{p(y)}^{p(x)}\Phi_i'(s)\,ds. 
\]
This yields that 
\[
|\Phi_i(p(x))-\Phi_i(p(y))|\le\int_{\min\{p(x),\,p(y)\}}^{\max\{p(x),\,p(y)\}}|\Phi_i'(s)|\,ds
\le\bigl(\sup_{|s|\le m}|\Phi_i'(s)|\bigr)|p(x)-p(y)|,
\]
so that we get
\begin{equation}\label{Holder-Phi}
\left\{\begin{array}{l}
|\Phi_1(p(x))-\Phi_1(p(y))|\le4m|p(x)-p(y)|, \\
|\Phi_2(p(x))-\Phi_2(p(y))|\le3(1+5m^2)|p(x)-p(y)|. 
\end{array}\right.
\end{equation}
The same calculation leads us to
\begin{equation}\label{differ-Phi}
\left\{\begin{array}{l}
|\Phi_1(p^{(1)})-\Phi_1(p^{(2)})|\le4m|p^{(1)}-p^{(2)}|, \\
|\Phi_2(p^{(1)})-\Phi_2(p^{(2)})|\le3(1+5m^2)|p^{(1)}-p^{(2)}|. 
\end{array}\right.
\end{equation}
In addition, we observe that for $p^{(j)}(x),p^{(j)}(y)\,(x\ne y,j=1,2)$
\begin{align*}
&\hspace*{-8pt}
\Phi_i(p^{(1)}(x))-\Phi_i(p^{(2)}(x))-\{\Phi_i(p^{(1)}(y))-\Phi_i(p^{(2)}(y))\} \\
=&\,\int_{p^{(2)}(x)}^{p^{(1)}(x)}\Phi_i'(s)\,ds-\int_{p^{(2)}(y)}^{p^{(1)}(y)}\Phi_i'(s)\,ds \\
=&\,\int_0^{p^{(1)}(x)-p^{(2)}(x)}\Phi_i'(\sigma+p^{(2)}(x))\,d\sigma-\int_0^{p^{(1)}(y)-p^{(2)}(y)}\Phi_i'(\sigma+p^{(2)}(y))\,d\sigma \\
=&\,\int_{p^{(1)}(y)-p^{(2)}(y)}^{p^{(1)}(x)-p^{(2)}(x)}\Phi_i'(\sigma+p^{(2)}(x))\,d\sigma \\
&\,+\int_0^{p^{(1)}(y)-p^{(2)}(y)}\{\Phi_i'(\sigma+p^{(2)}(x))-\Phi_i'(\sigma+p^{(2)}(y))\}\,d\sigma.
\end{align*}
It follows from the mean value theorem that for $\theta\in(0,1)$ 
\begin{align*}
\Phi_i'(\sigma+p^{(2)}(x))-\Phi_i'(\sigma+p^{(2)}(y))
=\Phi_i''(\sigma+\theta p^{(2)}(x)+(1-\theta)p^{(2)}(y))(p^{(2)}(x)-p^{(2)}(y)), 
\end{align*}
so that we obtain 
\begin{align*}
&\hspace*{-8pt}
|\Phi_i(p^{(1)}(x))-\Phi_i(p^{(2)}(x))-\{\Phi_i(p^{(1)}(y))-\Phi_i(p^{(2)}(y))\}| \\
=&\,\bigl(\sup_{|s|\le4m}|\Phi_i'(s)|\bigr)|p^{(1)}(x)-p^{(2)}(x)-(p^{(1)}(y)-p^{(2)}(y))| \\
&\,+\bigl(\sup_{|s|\le4m}|\Phi_i''(s)|\bigr)|p^{(2)}(x)-p^{(2)}(y)||p^{(1)}(y)-p^{(2)}(y)|.
\end{align*}
Since 
\[
\Phi_1''(s)=-\frac{4(1-5s^2)}{(1+s^2)^4}, \quad \Phi_2''(s)=-\frac{18s(3-5s^2)}{(1+s^2)^5},
\]
we see that for $|s|\le4m$
\begin{align}
&\hspace*{-8pt}
|\Phi_1(p^{(1)}(x))-\Phi_1(p^{(2)}(x))-\{\Phi_1(p^{(1)}(y))-\Phi_1(p^{(2)}(y))\}| \label{differ-Holder-Phi-1} \\
=&\,16m|p^{(1)}(x)-p^{(2)}(x)-(p^{(1)}(y)-p^{(2)}(y))| \notag \\
&\,+4(1+80m^2)|p^{(2)}(x)-p^{(2)}(y)||p^{(1)}(y)-p^{(2)}(y)|, \notag \\
&\hspace*{-8pt}
|\Phi_2(p^{(1)}(x))-\Phi_2(p^{(2)}(x))-\{\Phi_2(p^{(1)}(y))-\Phi_2(p^{(2)}(y))\}| \label{differ-Holder-Phi-2} \\
=&\,3(1+80m^2)|p^{(1)}(x)-p^{(2)}(x)-(p^{(1)}(y)-p^{(2)}(y))| \notag \\
&\,+72(3+80m^2)m|p^{(2)}(x)-p^{(2)}(y)||p^{(1)}(y)-p^{(2)}(y)|. \notag
\end{align}

\subsection{The estimates of the H\"older norm}\label{subsec:Holder-est-inhom}
Let us derive the estimates of the H\"older norm for the inhomogeneous term 
\[
f^w(x,t)=\Xi(w_x,w_{xx},w_{xxx})(x,t)=\Phi_1(w_x)w_{xxx}-\Phi_2(w_x)w_{xx}^2.
\]
We show the following lemmas.

\begin{lemma}\label{lem:f-Holder}
Let $\gamma\in(0,1)$. Assume that for $M>0$
\begin{equation}\label{assumption}
\|w\|_{B_{\frac{2+\gamma}4}((0,T];BUC^{3+\gamma}(\mathbb{R}_+))}\le M.
\end{equation}
Then there exists $C_M>0$ such that 
\[
\|f^w\|_{B_{\frac{2+\gamma}4 }((0,T];BUC^{\gamma}(\mathbb{R}_+))}\le C_M\|w_x\|_{B([0,T];BUC(\mathbb{R}_+)}^2. 
\]
\end{lemma}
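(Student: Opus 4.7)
The plan is to expand $f^w = \Phi_1(w_x) w_{xxx} - \Phi_2(w_x) w_{xx}^2$ and control each summand in both pieces of the $B_{\frac{2+\gamma}{4}}((0,T];BUC^{\gamma})$-norm, namely the weighted supremum $T^{-\gamma/4}\sup_{t\in(0,T]} t^{(2+\gamma)/4}\|\cdot\|_\infty$ and the weighted H\"older seminorm $\sup_{t\in(0,T]} t^{(2+\gamma)/4}[\cdot]_\gamma$. Writing $\|w_x\|_0 := \|w_x\|_{B([0,T];BUC(\mathbb{R}_+))}$ for brevity, the pointwise inequalities \eqref{bound-Phi} and \eqref{Holder-Phi} applied with $m = \|w_x(t)\|_\infty$ yield $\|\Phi_1(w_x(t))\|_\infty \le 4\|w_x(t)\|_\infty^2$, $\|\Phi_2(w_x(t))\|_\infty \le C_M \|w_x(t)\|_\infty$, $[\Phi_1(w_x(t))]_\gamma \le 4\|w_x(t)\|_\infty [w_x(t)]_\gamma$, and $[\Phi_2(w_x(t))]_\gamma \le C_M [w_x(t)]_\gamma$, where bounded factors $1+5\|w_x\|_\infty^2$ etc.\ are absorbed into $C_M$ thanks to the a priori boundedness of $\|w_x\|$ implied by \eqref{assumption}. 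After applying the standard product rules $\|fg\|_\infty \le \|f\|_\infty\|g\|_\infty$ and $[fg]_\gamma \le \|f\|_\infty [g]_\gamma + [f]_\gamma \|g\|_\infty$, the estimate reduces to controlling a short list of multi-factor terms by $C_M \|w_x\|_0^2$.

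For the weighted $L^\infty$ part, the $\Phi_1(w_x)w_{xxx}$ contribution satisfies $t^{(2+\gamma)/4} \|\Phi_1(w_x(t))w_{xxx}(t)\|_\infty \le 4\|w_x\|_0^2 \cdot t^{(2+\gamma)/4}\|w_{xxx}(t)\|_\infty \le 4M T^{\gamma/4}\|w_x\|_0^2$ directly from \eqref{assumption}. The $\Phi_2(w_x)w_{xx}^2$ contribution initially produces only $C_M \|w_x(t)\|_\infty \|w_{xx}(t)\|_\infty^2$, which carries one fewer power of $\|w_x\|_\infty$; to recover it I would invoke the Gagliardo--Nirenberg interpolation on $\mathbb{R}_+$, $\|w_{xx}\|_\infty^2 \le C\|w_x\|_\infty\|w_{xxx}\|_\infty$, which follows from Taylor's identity $w_x(x\pm h) = w_x(x) \pm h\,w_{xx}(x) + \tfrac{1}{2}h^2 w_{xxx}(\xi)$ with the sign chosen so that $x\pm h \in \mathbb{R}_+$ and $h$ optimized. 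This yields $t^{(2+\gamma)/4}\|\Phi_2(w_x(t))w_{xx}(t)^2\|_\infty \le C_M M T^{\gamma/4}\|w_x\|_0^2$, and dividing by $T^{\gamma/4}$ delivers the desired bound.

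For the weighted H\"older seminorm the product rule produces two kinds of terms. Those of the form $\|\Phi_i(w_x)\|_\infty [w_{xxx}]_\gamma$, or $\|\Phi_i(w_x)\|_\infty [w_{xx}^2]_\gamma$ with $[w_{xx}^2]_\gamma \le 2\|w_{xx}\|_\infty [w_{xx}]_\gamma$ followed by the same Gagliardo--Nirenberg step, behave exactly as the $L^\infty$ counterparts, with the $t^{(2+\gamma)/4}$-weighted $[w_{xxx}]_\gamma$ bounded by $M$ via \eqref{assumption}. The main obstacle is the cross terms of the form $[\Phi_i(w_x)]_\gamma$ times $w_{xxx}$ or $w_{xx}^2$, the hardest being $\|w_x(t)\|_\infty [w_x(t)]_\gamma \|w_{xxx}(t)\|_\infty$ arising from $[\Phi_1(w_x)]_\gamma \|w_{xxx}\|_\infty$; here an additional factor of $\|w_x(t)\|_\infty$ must be extracted from $[w_x(t)]_\gamma$ by a further Gagliardo--Nirenberg interpolation relating $[w_x(t)]_\gamma$ to $\|w_x(t)\|_\infty$, $\|w_{xx}(t)\|_\infty$, and $[w_{xxx}(t)]_\gamma$, chosen so that the resulting powers of $t$ combine with the weight $t^{(2+\gamma)/4}$ into a uniformly bounded quantity, while the remaining powers of $\|w_x(t)\|_\infty$ are grouped to produce $\|w_x\|_0^2$. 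The leftover bounded factor is absorbed into $C_M$, completing the estimate.
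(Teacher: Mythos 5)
Your proposal is correct and follows essentially the same route as the paper: the same splitting $f^w=\Phi_1(w_x)w_{xxx}-\Phi_2(w_x)w_{xx}^2$, the same pointwise bounds \eqref{bound-Phi} and \eqref{Holder-Phi} on $\Phi_i$, and the same Gagliardo--Nirenberg-type interpolation inequalities (the paper's \eqref{interpolation-1}--\eqref{interpolation-3}, quoted from Tanabe) to convert the leftover factors $\|w_{xx}\|_\infty^2$, $[w_x]_\gamma\|w_{xxx}\|_\infty$, etc.\ into $\|w_x\|_\infty\cdot[w_{xxx}]_\gamma$ or $\|w_x\|_\infty\cdot\|w_{xxx}\|_\infty$ so that the weight $t^{(2+\gamma)/4}$ is absorbed by \eqref{assumption}. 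The only difference is presentational: you sketch the interpolation inequalities from Taylor's formula while the paper cites them, and the exponents in your final interpolation step do work out exactly as you predict.
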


\noindent
Note that the interpolation inequalities 
\begin{align}
\label{interpolation-1}
&\|w_{xx}\|_\infty\le c_1\|w_x\|_\infty^{\frac12}\|w_{xxx}\|_\infty^{\frac12}, \\
\label{interpolation-2}
&\|w_{xxx}\|_\infty\le c_2\|w_x\|_\infty^{\frac{\gamma}{2+\gamma}}[w_{xxx}]_\gamma^{\frac2{2+\gamma}}, \quad 
[w_x]_\gamma\le c_3\|w_x\|_\infty^{\frac2{2+\gamma}}[w_{xxx}]_\gamma^{\frac{\gamma}{2+\gamma}}, \\
\label{interpolation-3}
&\|w_{xx}\|_\infty\le c_4\|w_x\|_\infty^{\frac{1+\gamma}{2+\gamma}}[w_{xxx}]_\gamma^{\frac1{2+\gamma}}, \quad
[w_{xx}]_\gamma\le c_5\|w_x\|_\infty^{\frac1{2+\gamma}}[w_{xxx}]_\gamma^{\frac{1+\gamma}{2+\gamma}}.
\end{align}
are given by Tanabe \cite[Theorem 3.1]{tan;97;book}. 

\begin{proof}
As for $\|f^w(t)\|_\infty$, it follows from \eqref{bound-Phi} and \eqref{assumption} that 
\begin{align*}
\|\Phi_1(w_x(t))w_{xxx}(t)\|_\infty
\le\|\Phi_1(w_x(t))\|_\infty\|w_{xxx}(t)\|_\infty
\le4M\|w_x(t)\|_\infty^2T^{\frac{\gamma}4}t^{-\frac{2+\gamma}4}, 
\end{align*}
and from \eqref{bound-Phi}, \eqref{assumption} and \eqref{interpolation-1} that 
\begin{align*}
\|\Phi_2(w_x(t))w_{xx}^2(t)\|_\infty
\le&\,\|\Phi_2(w_x(t))\|_\infty\|w_{xx}(t)\|_\infty^2 \\
\le&\,c_1^2\|\Phi_2(w_x(t))\|_\infty\|w_x(t)\|_\infty\|w_{xxx}(t)\|_\infty \\
\le&\,3c_1^2M(1+5\|w_x(t)\|_\infty^2)\|w_x(t)\|_\infty^2T^{\frac{\gamma}4}t^{-\frac{2+\gamma}4}.
\end{align*}
Thus there exists $C_M>0$ such that 
\[
\frac1{T^{\frac{\gamma}4}}\sup_{t\in(0,T]}t^{\frac{2+\gamma}4}\|f^w(t)\|_\infty\le C_M\|w_x(t)\|_\infty^2. 
\]
Concerning $[f^w(t)]_\gamma$, \eqref{bound-Phi}, \eqref{Holder-Phi}, \eqref{assumption} and \eqref{interpolation-2} imply that 
\begin{align*}
[\Phi_1(w_x(t))w_{xxx}(t)]_\gamma
\le&\,[\Phi_1(w_x(t))]_\gamma\|w_{xxx}(t)\|_\infty+\|\Phi_1(w_x(t))\|_\infty[w_{xxx}(t)]_\gamma \\
\le&\,4\|w_x(t)\|_\infty[w_x(t)]_\gamma\|w_{xxx}(t)\|_\infty+4\|w_x(t)\|_\infty^2[w_{xxx}(t)]_\gamma \\
\le&\,4c_2c_3\|w_x(t)\|_\infty\|w_x(t)\|_\infty[w_{xxx}(t)]_\gamma+4\|w_x(t)\|_\infty^2[w_{xxx}(t)]_\gamma \\
\le&\,4(c_2c_3+1)M\|w_x(t)\|_\infty^2t^{-\frac{2+\gamma}4},
\end{align*}
and \eqref{bound-Phi}, \eqref{Holder-Phi} and \eqref{assumption}--\eqref{interpolation-3} yield that 
\begin{align*}
[\Phi_2(w_x(t))w_{xx}^2(t)]_\gamma
\le&\,[\Phi_2(w_x(t))]_\gamma\|w_{xx}(t)\|^2+\|\Phi_2(w_x(t))\|_\infty[w_{xx}^2(t)]_\gamma \\
\le&\,3c_1^2(1+5\|w_x(t)\|_\infty^2)[w_x(t)]_\gamma\|w_x(t)\|_\infty\|w_{xxx}(t)\|_\infty \\
&\,+6(1+5\|w_x(t)\|_\infty^2)\|w_x(t)\|_\infty\|w_{xx}(t)\|_\infty[w_{xx}(t)]_\gamma \\
\le&\,3c_1^2c_2c_3(1+5\|w_x(t)\|_\infty^2)\|w_x(t)\|_\infty^2[w_{xxx}(t)]_\gamma \\
&\,+6c_4c_5(1+5\|w_x(t)\|_\infty^2)\|w_x(t)\|_\infty\|w_x(t)\|_\infty[w_{xxx}(t)]_\gamma \\
\le&\,3(c_1^2c_2c_3+2c_4c_5)M(1+5\|w_x(t)\|_\infty^2)\|w_x(t)\|_\infty^2t^{-\frac{2+\gamma}4}.
\end{align*}
Hence there exists $C_M>0$ such that 
\[
\sup_{t\in(0,T]}t^{\frac{2+\gamma}4}[f^w(t)]_\gamma
\le C_M\|w_x(t)\|_\infty^2. 
\]
Consequently, we have the desired result. 
\end{proof}

\medskip
\begin{lemma}\label{lem:differ-f-Holder}
Let $\gamma\in(0,1)$ and let $w^{(j)}(j=1,2)$ satisfy \eqref{assumption}. 
Then there exists $C_M>0$ such that 
\begin{align*}
&\|f^{w^{(1)}}-f^{w^{(2)}}\|_{B_{\frac{2+\gamma}4 }((0,T];BUC^{\gamma}(\mathbb{R}_+))} \\
&\le C_M\bigl(\max_{j=1,2}\|w^{(j)}_x\|_{B([0,T];BUC(\mathbb{R}_+)}\bigr)
\|w^{(1)}-w^{(2)}\|_{B_{\frac{2+\gamma}4}((0,T];BUC^{3+\gamma}(\mathbb{R}_+))}.
\end{align*}
\end{lemma}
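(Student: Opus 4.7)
The proof will parallel that of Lemma \ref{lem:f-Holder}, but one first splits the difference $f^{w^{(1)}}-f^{w^{(2)}}$ into ``directional-derivative'' pieces so that each summand contains a factor that is either a difference of the form $\Phi_i(w^{(1)}_x)-\Phi_i(w^{(2)}_x)$ or a difference of some derivative $\partial_x^k w^{(1)}-\partial_x^k w^{(2)}$. Concretely, I would write
\begin{align*}
f^{w^{(1)}}-f^{w^{(2)}}
&=\bigl[\Phi_1(w^{(1)}_x)-\Phi_1(w^{(2)}_x)\bigr]w^{(1)}_{xxx}+\Phi_1(w^{(2)}_x)\bigl[w^{(1)}_{xxx}-w^{(2)}_{xxx}\bigr] \\
&\quad -\bigl[\Phi_2(w^{(1)}_x)-\Phi_2(w^{(2)}_x)\bigr]\bigl(w^{(1)}_{xx}\bigr)^2 \\
&\quad -\Phi_2(w^{(2)}_x)\bigl(w^{(1)}_{xx}+w^{(2)}_{xx}\bigr)\bigl(w^{(1)}_{xx}-w^{(2)}_{xx}\bigr),
\end{align*}
and estimate the four pieces separately in $L^\infty$ and in the $C^\gamma$ seminorm.

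For the $L^\infty$ bounds, I would apply \eqref{bound-Phi} to the factors $\Phi_i(w^{(2)}_x)$ and \eqref{differ-Phi} to the differences $\Phi_i(w^{(1)}_x)-\Phi_i(w^{(2)}_x)$, pulling out one copy of $m:=\max_j\|w^{(j)}_x\|_{B([0,T];BUC(\mathbb{R}_+))}$ per piece (this is where the extra factor of $m$ on the right-hand side originates, because $\Phi_1$ and $\Phi_2$ vanish at $0$). The remaining factors involving second or third derivatives are then turned into the norm $\|w^{(1)}-w^{(2)}\|_{B_{(2+\gamma)/4}((0,T];BUC^{3+\gamma})}$ by means of the interpolation inequality \eqref{interpolation-1}, exactly as in the proof of Lemma \ref{lem:f-Holder}.

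For the $C^\gamma$ seminorm, I would use the Leibniz-type decomposition $[PQ]_\gamma\le[P]_\gamma\|Q\|_\infty+\|P\|_\infty[Q]_\gamma$ on each of the four pieces, now applying \eqref{Holder-Phi} to bound $[\Phi_i(w^{(j)}_x)]_\gamma$, \eqref{differ-Phi} together with \eqref{differ-Holder-Phi-1} and \eqref{differ-Holder-Phi-2} to bound the Hölder seminorm of $\Phi_i(w^{(1)}_x)-\Phi_i(w^{(2)}_x)$, and the interpolation inequalities \eqref{interpolation-2}--\eqref{interpolation-3} to transfer the resulting products of sup-norms and Hölder seminorms of the higher derivatives into the single combined norm $\|w^{(1)}-w^{(2)}\|_{B_{(2+\gamma)/4}((0,T];BUC^{3+\gamma})}$. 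Multiplying by the time weight $t^{(2+\gamma)/4}$ and taking the sup in $t$ (using that $\|w^{(j)}\|_{B_{(2+\gamma)/4}((0,T];BUC^{3+\gamma})}\le M$) then produces the stated estimate.

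The main obstacle is the last piece $\Phi_2(w^{(2)}_x)(w^{(1)}_{xx}+w^{(2)}_{xx})(w^{(1)}_{xx}-w^{(2)}_{xx})$: it is a triple product, so when expanding $[\cdot]_\gamma$ by the Leibniz rule one obtains three separate terms, and in each of them the two copies of second derivatives must be distributed between the interpolation inequalities \eqref{interpolation-1} and \eqref{interpolation-3} in the correct way so that exactly one factor $m$ is extracted, one factor becomes $\|w^{(1)}-w^{(2)}\|_{B_{(2+\gamma)/4}((0,T];BUC^{3+\gamma})}$, and the remaining factors are absorbed into $C_M$; the analogous bookkeeping for \eqref{differ-Holder-Phi-1}, \eqref{differ-Holder-Phi-2} (which contain mixed products of differences and Hölder seminorms of the $w^{(j)}_x$) is the parallel difficulty. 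Once these combinatorial weight choices are made consistent with the $t^{(2+\gamma)/4}$ weight, the rest of the proof reduces to routine calculation in the spirit of Lemma \ref{lem:f-Holder}.
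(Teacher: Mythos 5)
Your proposal is correct and follows essentially the same route as the paper: the identical four-term decomposition of $f^{w^{(1)}}-f^{w^{(2)}}$ (the paper groups them as $\Psi_1+\Psi_2$), the bounds \eqref{bound-Phi}, \eqref{differ-Phi}, \eqref{Holder-Phi}, \eqref{differ-Holder-Phi-1}, \eqref{differ-Holder-Phi-2} on the $\Phi_i$, and the interpolation inequalities \eqref{interpolation-1}--\eqref{interpolation-3} applied both to $w^{(j)}$ and to the difference $w^{(1)}-w^{(2)}$ to extract the single factor of $m$ and the difference norm. The bookkeeping issues you flag are exactly the ones the paper's computation resolves.
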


\begin{proof}
For $w^{(j)}(j=1,2)$, we first observe that 
\begin{align*}
f^{w^{(1)}}-f^{w^{(2)}} 
=\,\,&\,(\Phi_1(w^{(1)}_x)-\Phi_1(w^{(2)}_x))w^{(1)}_{xxx}+\Phi_1(w^{(2)}_x)(w^{(1)}_{xxx}-w^{(2)}_{xxx}) \\
&\,+(\Phi_2(w^{(1)}_x)-\Phi_2(w^{(2)}_x))(w^{(1)}_{xx})^2+\Phi_2(w^{(2)}_x)(w^{(1)}_{xx}+w^{(2)}_{xx})(w^{(1)}_{xx}-w^{(2)}_{xx}) \\
=:&\,\Psi_1+\Psi_2.
\end{align*}
Set $m:=\max_{j=1,2}\|w^{(j)}_x\|_{B([0,T];BUC(\mathbb{R}_+)}$. 
As regards $\|f^{w^{(1)}}(t)-f^{w^{(2)}}(t)\|_\infty$, \eqref{differ-Phi} and \eqref{assumption} imply that 
\begin{align*}
&\hspace*{-8pt}
\|\Psi_1(t)\|_\infty \\
\le&\,\|\Phi_1(w^{(1)}_x(t))-\Phi_1(w^{(2)}_x(t))\|_\infty\|w^{(1)}_{xxx}(t)\|_\infty
+\|\Phi_1(w^{(2)}_x(t))\|_\infty\|w^{(1)}_{xxx}(t)-w^{(2)}_{xxx}(t)\|_\infty \\
\le&\,4m\|w^{(1)}_x(t)-w^{(2)}_x(t)\|_\infty\|w^{(1)}_{xxx}(t)\|_\infty+4m^2\|w^{(1)}_{xxx}(t)-w^{(2)}_{xxx}(t)\|_\infty \\
\le&\,4mM\|w^{(1)}_x(t)-w^{(2)}_x(t)\|_\infty T^{\frac{\gamma}4}t^{-\frac{2+\gamma}4} \\
&\,+4m^2\Bigl(\frac1{T^{\frac{\gamma}4}}t^{\frac{2+\gamma}4}\|w^{(1)}_{xxx}(t)-w^{(2)}_{xxx}(t)\|_\infty\Bigr)
T^{\frac{\gamma}4}t^{-\frac{2+\gamma}4},
\end{align*}
and \eqref{differ-Phi}, \eqref{assumption} and \eqref{interpolation-1} yield that 
\begin{align*}
&\hspace*{-8pt}
\|\Psi_2(t)\|_\infty \\
\le&\,\|\Phi_2(w^{(1)}_x(t))-\Phi_2(w^{(2)}_x(t))\|_\infty\|w^{(1)}_{xx}(t)\|_\infty^2 \\
&\,+\|\Phi_2(w^{(2)}_x(t))\|_\infty(\|w^{(1)}_{xx}(t)\|_\infty+\|w^{(2)}_{xx}(t)\|_\infty)\|w^{(1)}_{xx}(t)-w^{(2)}_{xx}(t)\|_\infty \\
\le&\,3c_1(1+5m^2)\|w^{(1)}_x(t)-w^{(2)}_x(t)\|_\infty\|w^{(1)}_x(t)\|_\infty\|w^{(1)}_{xxx}(t)\|_\infty \\
&\,+3c_1^2(1+5m^2)m(\|w^{(1)}_x(t)\|_\infty^{\frac12}\|w^{(1)}_{xxx}(t)\|_\infty^{\frac12}
+\|w^{(2)}_x(t)\|_\infty^{\frac12}\|w^{(2)}_{xxx}(t)\|_\infty^{\frac12}) \\
&\hspace*{12.5pt}
\|w^{(1)}_x(t)-w^{(2)}_x(t)\|_\infty^{\frac12}\|w^{(1)}_{xxx}(t)-w^{(2)}_{xxx}(t)\|_\infty^{\frac12} \\
\le&\,3c_1(1+5m^2)mM\|w^{(1)}_x(t)-w^{(2)}_x(t)\|_\infty T^{\frac{\gamma}4}t^{-\frac{2+\gamma}4} \\
&\,+6c_1^2(1+5m^2)m^{\frac32}M^{\frac12} \|w^{(1)}_x(t)-w^{(2)}_x(t)\|_\infty^{\frac12} \\
&\hspace*{12.5pt}
\Bigl(\frac1{T^{\frac{\gamma}4}}t^{\frac{2+\gamma}4}\|w^{(1)}_{xxx}(t)-w^{(2)}_{xxx}(t)\|_\infty\Bigr)^{\frac12}
T^{\frac{\gamma}4}t^{-\frac{2+\gamma}4} \\
\le&\,3c_1(1+5m^2)mM\|w^{(1)}_x(t)-w^{(2)}_x(t)\|_\infty T^{\frac{\gamma}4}t^{-\frac{2+\gamma}4} \\
&\,+3c_1^2(1+5m^2)\Bigl\{mM\|w^{(1)}_x(t)-w^{(2)}_x(t)\|_\infty \\
&\hspace*{12.5pt}
+m^2\Bigl(\frac1{T^{\frac{\gamma}4}}t^{\frac{2+\gamma}4}\|w^{(1)}_{xxx}(t)-w^{(2)}_{xxx}(t)\|_\infty\Bigr)\Bigr\}
T^{\frac{\gamma}4}t^{-\frac{2+\gamma}4} \\
\le&\,3c_1(1+c_1)(1+5m^2)mM\|w^{(1)}_x(t)-w^{(2)}_x(t)\|_\infty T^{\frac{\gamma}4}t^{-\frac{2+\gamma}4} \\
&\,+3c_1^2(1+5m^2)m^2\Bigl(\frac1{T^{\frac{\gamma}4}}t^{\frac{2+\gamma}4}\|w^{(1)}_{xxx}(t)-w^{(2)}_{xxx}(t)\|_\infty\Bigr)
T^{\frac{\gamma}4}t^{-\frac{2+\gamma}4}.
\end{align*}
Therefore, there exists $C_M>0$ such that 
\begin{align*}
\frac1{T^{\frac{\gamma}4}}\sup_{t\in(0,T]}t^{\frac{2+\gamma}4}\|f^{w^{(1)}}(t)-f^{w^{(2)}}(t)\|_\infty 
\le C_Mm\|w^{(1)}-w^{(2)}\|_{B_{\frac{2+\gamma}4}((0,T];BUC^{3+\gamma}(\mathbb{R}_+))}.
\end{align*}
Concerning $[f^{w^{(1)}}(t)-f^{w^{(2)}}(t)]_\gamma$, \eqref{Holder-Phi}--\eqref{differ-Holder-Phi-1}, \eqref{assumption}, 
\eqref{interpolation-2} and \eqref{interpolation-3} imply that 
\begin{align*}
&\hspace*{-8pt}
[\Psi_1(t)]_\gamma \\
\le&\,[\Phi_1(w^{(1)}_x(t))-\Phi_1(w^{(2)}_x(t))]_\gamma\|w^{(1)}_{xxx}\|_\infty
+\|\Phi_1(w^{(1)}_x(t))-\Phi_1(w^{(2)}_x(t))\|_\infty[w^{(1)}_{xxx}(t)]_\gamma \\
&\,+[\Phi_1(w^{(2)}_x(t))]_\gamma\|w^{(1)}_{xxx}(t)-w^{(2)}_{xxx}(t)\|_\infty
+\|\Phi_1(w^{(2)}_x(t))\|_\infty[w^{(1)}_{xxx}(t)-w^{(2)}_{xxx}(t)]_\gamma \\
\le&\,\bigl\{16m[w^{(1)}_x(t)-w^{(2)}_x(t)]_\gamma+4(1+80m^2)[w^{(2)}_x(t)]_\gamma\|w^{(1)}_x(t)-w^{(2)}_x(t)\|_\infty\bigr\}\|w^{(1)}_{xxx}(t)\|_\infty \\
&\,+4m\|w^{(1)}_x(t)-w^{(2)}_x(t)\|_\infty[w^{(1)}_{xxx}(t)]_\gamma+4m[w^{(2)}_x(t)]_\gamma\|w^{(1)}_{xxx}(t)-w^{(2)}_{xxx}(t)\|_\infty \\
&\,+4m^2[w^{(1)}_{xxx}(t)-w^{(2)}_{xxx}(t)]_\gamma \\
\le&\,4c_2c_3\bigl\{4m\|w^{(1)}_x(t)-w^{(2)}_x(t)\|_\infty^{\frac2{2+\gamma}}
[w^{(1)}_{xxx}(t)-w^{(2)}_{xxx}(t)]_\gamma^{\frac{\gamma}{2+\gamma}} \\
&\,+(1+80m^2)\|w^{(2)}_x(t)\|_\infty^{\frac2{2+\gamma}}[w^{(2)}_{xxx}(t)]_\gamma^{\frac{\gamma}{2+\gamma}}\|w^{(1)}_x(t)-w^{(2)}_x(t)\|_\infty\bigr\} \\
&\hspace*{12.5pt}
\|w^{(1)}_x(t)\|_\infty^{\frac{\gamma}{2+\gamma}}[w^{(1)}_{xxx}(t)]_\gamma^{\frac2{2+\gamma}} \\ 
&\,+4m\|w^{(1)}_x(t)-w^{(2)}_x(t)\|_\infty[w^{(1)}_{xxx}(t)]_\gamma \\
&\,+4c_2c_3m\|w^{(2)}_x(t)\|_\infty^{\frac2{2+\gamma}}[w^{(2)}_{xxx}(t)]_\gamma^{\frac{\gamma}{2+\gamma}}
\|w^{(1)}_x(t)-w^{(2)}_x(t)\|_\infty^{\frac{\gamma}{2+\gamma}}[w^{(1)}_{xxx}(t)-w^{(2)}_{xxx}(t)]_\gamma^{\frac2{2+\gamma}} \\
&\,+4m^2[w^{(1)}_{xxx}(t)-w^{(2)}_{xxx}(t)]_\gamma \\
\le&\,20c_2c_3m\bigl(M\|w^{(1)}_x(t)-w^{(2)}_x(t)\|_\infty\bigr)^{\frac2{2+\gamma}}
\bigl\{m(t^{\frac{2+\gamma}4}[w^{(1)}_{xxx}(t)-w^{(2)}_{xxx}(t)]_\gamma)\bigr\}^{\frac{\gamma}{2+\gamma}}t^{-\frac{2+\gamma}4} \\
&\,+4c_2c_3(1+80m^2)mM\|w^{(1)}_x(t)-w^{(2)}_x(t)\|_\infty t^{-\frac{2+\gamma}4} \\
&\,+4mM\|w^{(1)}_x(t)-w^{(2)}_x(t)\|_\infty t^{-\frac{2+\gamma}4} \\
&\,+4m^2(t^{\frac{2+\gamma}4}[w^{(1)}_{xxx}(t)-w^{(2)}_{xxx}(t)]_\gamma)t^{-\frac{2+\gamma}4} \\
\le&\,20c_2c_3c_6m\bigl\{M\|w^{(1)}_x(t)-w^{(2)}_x(t)\|_\infty
+m(t^{\frac{2+\gamma}4}[w^{(1)}_{xxx}(t)-w^{(2)}_{xxx}(t)]_\gamma)\bigr\}t^{-\frac{2+\gamma}4} \\
&\,+4(c_2c_3(1+80m^2)+1)mM\|w^{(1)}_x(t)-w^{(2)}_x(t)\|_\infty t^{-\frac{2+\gamma}4} \\
&\,+4m^2(t^{\frac{2+\gamma}4}[w^{(1)}_{xxx}(t)-w^{(2)}_{xxx}(t)]_\gamma)t^{-\frac{2+\gamma}4},
\end{align*}
and \eqref{Holder-Phi}, \eqref{differ-Phi}, \eqref{differ-Holder-Phi-2}, \eqref{assumption}, \eqref{interpolation-2} and 
\eqref{interpolation-3} yield that 
\begin{align*}
&\hspace*{-8pt}
[\Psi_2(t)]_\gamma \\
\le&\,[\Phi_2(w^{(1)}_x(t))-\Phi_2(w^{(2)}_x(t))]_\gamma\|w^{(1)}_{xx}(t)\|_\infty^2 \\
&\,+\|\Phi_2(w^{(1)}_x(t))-\Phi_2(w^{(2)}_x(t))\|_\infty[(w^{(1)}_{xx}(t))^2]_\gamma \\
&\,+[\Phi_2(w^{(2)}_x(t))]_\gamma(\|w^{(1)}_{xx}(t)\|_\infty+\|w^{(2)}_{xx}(t)\|_\infty)\|w^{(1)}(t)_{xx}-w^{(2)}_{xx}(t)\|_\infty \\
&\,+\|\Phi_2(w^{(2)}_x(t))\|_\infty[(w^{(1)}_{xx}(t))^2-(w^{(2)}_{xx}(t))^2]_\gamma \\
\le&\,\bigl\{3(1+80m^2)[w^{(1)}_x(t)-w^{(2)}_x(t)]_\gamma \\
&\,+72(3+80m^2)m[w^{(2)}_x(t)]_\gamma\|w^{(1)}_x(t)-w^{(2)}_x(t)\|_\infty\bigr\}\|w^{(1)}_{xx}(t)\|_\infty^2 \\
&\,+6(1+5m^2)\|w^{(1)}_x(t)-w^{(2)}_x(t)\|_\infty\|w^{(1)}_{xx}(t)\|_\infty[w^{(1)}_{xx}(t)]_\gamma \\
&\,+3(1+5m^2)[w^{(2)}_x(t)]_\gamma(\|w^{(1)}_{xx}(t)\|_\infty+\|w^{(2)}_{xx}(t)\|_\infty)\|w^{(1)}_{xx}(t)-w^{(2)}_{xx}(t)\|_\infty \\
&\,+3(1+5m^2)m(\|w^{(1)}_{xx}(t)\|_\infty+\|w^{(2)}_{xx}(t)\|_\infty)[w^{(1)}_{xx}(t)-w^{(2)}_{xx}(t)]_\gamma \\
&\,+3(1+5m^2)m([w^{(1)}_{xx}(t)]_\gamma+[w^{(2)}_{xx}(t)]_\gamma)\|w^{(1)}_{xx}(t)-w^{(2)}_{xx}(t)\|_\infty \\
\le&\,3c_3c_4^2\bigl\{(1+80m^2)\|w^{(1)}_x(t)-w^{(2)}_x(t)\|_\infty^{\frac2{2+\gamma}}
[w^{(1)}_{xxx}(t)-w^{(2)}_{xxx}(t)]_\gamma^{\frac{\gamma}{2+\gamma}} \\
&\,+24(3+80\|w_x(t)\|_\infty^2)m\|w^{(2)}_x(t)\|_\infty^{\frac2{2+\gamma}}[w^{(2)}_{xxx}(t)]_\gamma^{\frac{\gamma}{2+\gamma}}
\|w^{(1)}_x(t)-w^{(2)}_x(t)\|_\infty\bigr\} \\
&\hspace*{12.5pt}
\bigl(\|w^{(1)}_x(t)\|_\infty^{\frac{1+\gamma}{2+\gamma}}[w^{(1)}_{xxx}(t)]_\gamma^{\frac1{2+\gamma}}\bigr)^2 \\
&\,+6c_4c_5(1+5m^2)\|w^{(1)}_x(t)-w^{(2)}_x(t)\|_\infty\|w^{(1)}_x(t)\|_\infty^{\frac{1+\gamma}{2+\gamma}} \\
&\hspace*{12.5pt}
[w^{(1)}_{xxx}(t)]_\gamma^{\frac1{2+\gamma}}\|w^{(1)}_x(t)\|_\infty^{\frac1{2+\gamma}}[w^{(1)}_{xxx}(t)]_\gamma^{\frac{1+\gamma}{2+\gamma}} \\
&\,+3c_3c_4^2(1+5m^2)\|w^{(1)}_x(t)\|_\infty^{\frac2{2+\gamma}}[w^{(1)}_{xxx}(t)]_\gamma^{\frac{\gamma}{2+\gamma}} \\
&\hspace*{12.5pt}
(\|w^{(1)}_x(t)\|_\infty^{\frac{1+\gamma}{2+\gamma}}[w^{(1)}_{xxx}(t)]_\gamma^{\frac1{2+\gamma}}
+\|w^{(2)}_x(t)\|_\infty^{\frac{1+\gamma}{2+\gamma}}[w^{(2)}_{xxx}(t)]_\gamma^{\frac1{2+\gamma}}) \\
&\hspace*{12.5pt}
\|w^{(1)}_x(t)-w^{(1)}_x(t)\|_\infty^{\frac{1+\gamma}{2+\gamma}}[w^{(1)}_{xxx}(t)-w^{(2)}_{xxx}(t)]_\gamma^{\frac1{2+\gamma}} \\
&\,+3c_4c_5(1+5m^2)\|w^{(2)}_x(t)\|_\infty \\
&\hspace*{12.5pt}
(\|w^{(1)}_x(t)\|_\infty^{\frac{1+\gamma}{2+\gamma}}[w^{(1)}_{xxx}(t)]_\gamma^{\frac1{2+\gamma}}
+\|w^{(2)}_x(t)\|_\infty^{\frac{1+\gamma}{2+\gamma}}[w^{(2)}_{xxx}(t)]_\gamma^{\frac1{2+\gamma}}) \\
&\hspace*{12.5pt}
\|w^{(1)}_x(t)-w^{(1)}_x(t)\|_\infty^{\frac1{2+\gamma}}[w^{(1)}_{xxx}(t)-w^{(2)}_{xxx}(t)]_\gamma^{\frac{1+\gamma}{2+\gamma}} \\
&\,+3c_4c_5(1+5m^2)\|w^{(2)}_x(t)\|_\infty \\
&\hspace*{12.5pt}
(\|w^{(1)}_x(t)\|_\infty^{\frac1{2+\gamma}}[w^{(1)}_{xxx}(t)]_\gamma^{\frac{1+\gamma}{2+\gamma}}
+\|w^{(2)}_x(t)\|_\infty^{\frac1{2+\gamma}}[w^{(2)}_{xxx}(t)]_\gamma^{\frac{1+\gamma}{2+\gamma}}) \\
&\hspace*{12.5pt}
\|w^{(1)}_x(t)-w^{(1)}_x(t)\|_\infty^{\frac{1+\gamma}{2+\gamma}}[w^{(1)}_{xxx}(t)-w^{(2)}_{xxx}(t)]_\gamma^{\frac1{2+\gamma}} \\
\le&\,3c_3c_4^2(1+80m^2)m\bigl(M\|w^{(1)}_x(t)-w^{(2)}_x(t)\|_\infty\bigr)^{\frac2{2+\gamma}} \\
&\,\bigl\{m(t^{\frac{2+\gamma}4}[w^{(1)}_{xxx}(t)-w^{(2)}_{xxx}(t)]_\gamma)\bigr\}^{\frac{\gamma}{2+\gamma}}t^{-\frac{2+\gamma}4} \\
&\,+72c_3c_4^2(3+80m^2)m^3M\|w^{(1)}_x(t)-w^{(2)}_x(t)\|_\infty t^{-\frac{2+\gamma}4} \\
&\,+6c_4c_5(1+5m^2)mM\|w^{(1)}_x(t)-w^{(2)}_x(t)\|_\infty t^{-\frac{2+\gamma}4} \\
&\,+6c_4(1+5m^2)m \\
&\hspace*{12.5pt}
\bigl[c_3c_4\bigl(M\|w^{(1)}_x(t)-w^{(1)}_x(t)\|_\infty\bigr)^{\frac{1+\gamma}{2+\gamma}}
\bigl\{m(t^{\frac{2+\gamma}4}[w^{(1)}_{xxx}(t)-w^{(2)}_{xxx}(t)]_\gamma)\bigr\}^{\frac1{2+\gamma}} \\
&\,\hspace*{12.5pt}
+c_5\bigl(M\|w^{(1)}_x(t)-w^{(1)}_x(t)\|_\infty\bigr)^{\frac1{2+\gamma}}
\bigl\{m(t^{\frac{2+\gamma}4}[w^{(1)}_{xxx}(t)-w^{(2)}_{xxx}(t)]_\gamma)\bigr\}^{\frac{1+\gamma}{2+\gamma}} \\
&\,\hspace*{12.5pt}
+c_5\bigl(M\|w^{(1)}_x(t)-w^{(1)}_x(t)\|_\infty\bigr)^{\frac{1+\gamma}{2+\gamma}}
\bigl\{m(t^{\frac{2+\gamma}4}[w^{(1)}_{xxx}(t)-w^{(2)}_{xxx}(t)]_\gamma)\bigr\}^{\frac1{2+\gamma}}\bigr]t^{-\frac{2+\gamma}4} \\
\le&\,3c_3c_4^2c_6(1+80m^2)m \\
&\,\bigl\{M\|w^{(1)}_x(t)-w^{(2)}_x(t)\|_\infty+m(t^{\frac{2+\gamma}4}[w^{(1)}_{xxx}(t)-w^{(2)}_{xxx}(t)]_\gamma)\bigr\}t^{-\frac{2+\gamma}4} \\
&\,+72c_3c_4^2(3+80m^2)m^3M\|w^{(1)}_x(t)-w^{(2)}_x(t)\|_\infty t^{-\frac{2+\gamma}4} \\
&\,+6c_4c_5(1+5m^2)mM\|w^{(1)}_x(t)-w^{(2)}_x(t)\|_\infty t^{-\frac{2+\gamma}4} \\
&\,+6c_4c_6(c_3c_4+2c_5)(1+5m^2)m \\
&\hspace*{12.5pt}
\bigl\{M\|w^{(1)}_x(t)-w^{(1)}_x(t)\|_\infty+m(t^{\frac{2+\gamma}4}[w^{(1)}_{xxx}(t)-w^{(2)}_{xxx}(t)]_\gamma)\bigr\}t^{-\frac{2+\gamma}4}.
\end{align*}
As a result, there exists $C_M>0$ such that 
\[
\sup_{t\in(0,T]}t^{\frac{2+\gamma}4}[f^{w^{(1)}}(t)-f^{w^{(2)}}(t)]_\gamma
\le C_Mm\|w^{(1)}-w^{(2)}\|_{B_{\frac{2+\gamma}4}((0,T];BUC^{3+\gamma}(\mathbb{R}_+))}.
\]
Consequently, we have the desired result. 
\end{proof}

\subsection*{Acknowledgment}
The authors would like to thank Professor Yoshikazu Giga for fruitful discussions on the fundamental solution 
in the half space. 
This project is supported by JSPS KAKENHI Grant Number JP24K06810, JP23H00085, JP23K03215 
for the second author.



\end{document}